\theoremstyle{plain}
\newtheorem{theo}{Theorem}[subsection]
\newtheorem{prop}[theo]{Proposition}
\newtheorem{cor}[theo]{Corollary}
\newtheorem{conj}[theo]{Conjecture}
\newtheorem{lem}[theo]{Lemma}
\theoremstyle{definition}
\newtheorem{defi}[theo]{Definition}
\newtheorem{ex}[theo]{Example}
\theoremstyle{remark}
\newtheorem{rem}[theo]{Remark}
\DeclareMathOperator{\supp}{supp}
\DeclareMathOperator{\id}{Id}
\DeclarePairedDelimiter{\abs}{\lvert}{\rvert}
\newcommand{\Uqg}{\mathcal{U}_{q}(L\mathfrak{g})}
\newcommand{\Uqb}{\mathcal{U}_{q}(\mathfrak{b})}
\newcommand{\cO}{\mathcal{O}}
\newcommand{\Op}{\mathcal{O}^{+}}
\newcommand{\Om}{\mathcal{O}^{-}}
\newcommand{\Plr}{P_{\ell}^{\mathfrak{r}}}
\newcommand{\Y}{\mathscr{Y}}
\newcommand{\T}{\mathscr{T}}
\newcommand{\Ctt}{\mathbb{C}(t^{1/2})}
\newcommand{\C}{\mathscr{C}}
\newcommand{\qt}{(q,t)}
\newcommand{\g}{\mathfrak{g}}
\newcommand{\Z}{\mathbb{Z}}
\newcommand{\N}{\mathbb{N}}
\newcommand{\Q}{\mathbb{Q}}
\newcommand{\A}{\mathcal{A}}
\newcommand{\Yt}{\mathcal{Y}_t}
\newcommand{\E}{\mathcal{E}}
\newcommand{\F}{\mathcal{F}}
\newcommand{\Kp}{K_0(\cO^+_\Z)}
\newcommand{\Ktp}{K_t(\cO^+_\Z)}
\newcommand{\e}{\textbf{e}}
\newcommand{\f}{\textbf{f}}
\newcommand{\J}{\mathcal{J}}
\numberwithin{equation}{section}
\author{Léa Bittmann}
\title{Quantum Grothendieck rings as quantum cluster algebras.}
\begin{document}

\tikzset{->-/.style={decoration={
  markings,
  mark=at position .5 with {\arrow{>}}},postaction={decorate}}}

\maketitle

\begin{abstract}
We define and construct a quantum Grothendieck ring for a certain monoidal subcategory of the category $\cO$ of representations of the quantum loop algebra introduced by Hernandez-Jimbo. We use the cluster algebra structure of the Grothendieck ring of this category to define the quantum Grothendieck ring as a quantum cluster algebra. When the underlying simple Lie algebra is of type $A$, we prove that this quantum Grothendieck ring contains the quantum Grothendieck ring of the category of finite-dimensional representations of the associated quantum affine algebra. In type $A_1$, we identify remarkable relations in this quantum Grothendieck ring.
\end{abstract}

\tableofcontents

%\newpage

	\section{Introduction}
	
Let $\g$ be a simple Lie algebra of Dynkin type $A$, $D$ or $E$ (also called simply laced types), and let $L\g=\g\otimes\mathbb{C}[t^{\pm 1}]$ be the loop algebra of $\g$. For $q$ a generic complex number, Drinfeld \citep{ANRY} introduced a $q$-deformation of the universal enveloping algebra $U(L\g)$ of $L\g$ called the \emph{quantum loop algebra} $\Uqg$. It is a $\mathbb{C}$-algebra with a Hopf algebra structure, and the category $\C$ of its finite-dimensional representations is a monoidal category. The category $\C$ was studied extensively, in particular to build solutions to the quantum Yang-Baxter equation with spectral parameter (see \citep{YBe} for a detailed review). 

Using the so-called "Drinfeld-Jimbo" presentation of the quantum loop algebra, one can define a \emph{quantum Borel subalgebra} $\Uqb$, which is a Hopf subalgebra of $\Uqg$. We are here interested in studying a category $\cO$ of representations of $\Uqb$ introduced by Hernandez-Jimbo \citep{ARDRF}. The category $\cO$ contains all finite-dimensional $\Uqb$-modules, as well as some infinite-dimensional representations, however, with finite-dimensional weight spaces. In particular, this category $\cO$ contains the \emph{prefundamental representations}. These are a family of infinite dimensional simple $\Uqb$-modules, which first appeared in the work of Bazhanov, Lukyanov, Zamolodchikov \cite{ISCFT} for $\g=\mathfrak{sl}_2$ under the name $q$\emph{-oscillator representations}. 

These prefundamental representations were also used by Frenkel-Hernandez \citep{BRSQIM} to prove Frenkel-Reshetikhin’s conjecture on the spectra of
quantum integrable systems \citep{QCRQAA}. More precisely, quantum integrable systems are studied via a partition function $Z$, which in turns can be scaled down to the study of the eigenvalues $\lambda_j$ of the transfer matrix $T$. For the 6-vertex (and 8-vertex) models, \citep{BAXT} showed that the eigenvalues of $T$ have the following remarkable form:
\begin{equation}\label{TQ}
 \lambda_j = A(z) \frac{Q_j(zq^{-2})}{Q_j(z)} + D(z) \frac{Q_j(zq^{2})}{Q_j(z)},
\end{equation}
where $q$ and $z$ are parameters of the model, the functions $A(z)$, $D(z)$ are universal, and $Q_j$ is a polynomial. This relation is called the \emph{Baxter relation}. In the context of representation theory, relation (\ref{TQ}) can be categorified as a relation in the Grothendieck ring of the category $\cO$. For $\g=\mathfrak{sl}_2$, if $V$ is the two-dimensional simple representation of $\Uqg$ of highest loop-weight $Y_{aq^{-1}}$, then
\begin{equation}\label{TQ'}
[V\otimes L^+_{a}] = [\omega_1][L^+_{aq^{-2}}] +[-\omega_1][L^+_{aq^2}],
\end{equation}
where $[\pm\omega_1]$ are one-dimensional representations of weight $\pm\omega_1$ and $L^+_{a}$ denotes the positive prefundamental representation of quantum parameter $a$.

Frenkel-Reshetikhin’s conjecture stated that for more general quantum integrable systems, constructed via finite-dimensional representations of the quantum affine algebra $\mathcal{U}_q(\hat{\g})$ (of which the quantum loop algebra is a quotient) the spectra had a similar form as relation (\ref{TQ}). 

Let $t$ be an indeterminate. The Grothendieck ring of the category $\C$ has an interesting $t$-deformation called the \emph{quantum Grothendieck ring}, which belongs to a non-commutative quantum torus $\Y_t$. The quantum Grothendieck ring was first studied by Nakajima \citep{QVtA} and Varagnolo-Vasserot \citep{PSqGR} in relation with quiver varieties. Inside this ring, one can define for all simple modules $L$ classes $[L]_t$, called $\qt$\emph{-characters}. Using these classes, Nakajima was able to compute the characters of the simple modules $L$, which were completely not accessible in general, thanks to a Kazhdan-Lusztig type algorithm. 

One would want to extend these results to the context of the category $\cO$, with the ultimate goal of (algorithmically) computing characters of all simple modules in $\cO$. In order to do that, one first needs to build a quantum Grothendieck ring $K_t(\cO)$ inside which the classes $[L]_t$ can be defined. 

Another interesting approach to this category $\cO$ is its \emph{cluster algebra} structure (see below). Hernandez-Leclerc \citep{MCCA} first noticed that the Grothendieck ring of a certain monoidal subcategory $\C_1$ of the category $\C$ of finite-dimensional $\Uqg$-modules had the structure of a cluster algebra. Then, they proved \citep{CABS} that the Grothendieck ring of a certain monoidal subcategory $\cO^+_\Z$  of the category $\cO$ had a cluster algebra structure, of infinite rank, for which one can take as initial seed the classes of the positive prefundamental representations (the category $\cO^+_\Z$ contains the finite-dimensional representations and the positive prefundamental representations whose spectral parameter satisfy an integrality condition).  Moreover, some exchange relations coming from cluster mutations appear naturally. For example, the Baxter relation (\ref{TQ'}) is an exchange relation in this cluster algebra.

In order to construct of quantum Grothendieck ring for the category $\cO$, the approaches used previously are not applicable anymore. The geometrical approach of Nakajima and Varagnolo-Vasserot (in which the $t$-graduation naturally comes from the graduation of cohomological complexes) requires a geometric interpretation of the objects in the category $\cO$, which has not yet been found. The more algebraic approach consisting of realizing the (quantum) Grothendieck ring as an invariant under a sort of Weyl symmetry, which allowed Hernandez to define a quantum Grothendieck ring of finite-dimensional representations in non-simply laced types, is again not relevant for the category $\cO$. Only the cluster algebra approach yields results in this context.

In this paper, we propose to build the quantum Grothendieck of the category $\cO^+_\Z$ as a \emph{quantum cluster algebra}. Quantum cluster algebras are non-commutative versions of cluster algebras, they live inside a quantum torus, generated by the initial variables, together with $t$-commuting relations:
\begin{equation}
X_i \ast X_j = t^{\Lambda_{ij}} X_j\ast X_i.
\end{equation}
First of all, one has to build such a quantum torus, and check that it contains the quantum torus $\Y_t$ of the quantum Grothendieck ring of the category $\C$. This is proven as the first result of this paper (Proposition \ref{propinclusiontores}). 

Next, one has to show that this quantum torus is compatible with a quantum cluster algebra structure based on the same quiver as the cluster algebra structure of the Grothendieck ring $K_0(\cO^+_\Z)$. In order to do that, we exhibit (Proposition \ref{propComp}) a \emph{compatible pair}. From then, the quantum Grothendieck ring $\Ktp$ is defined as the quantum cluster algebra defined from this compatible pair.

We then conjecture (Conjecture \ref{conj}) that this quantum Grothendieck ring $\Ktp$ contains the quantum Grothendieck ring $K_t(\C_\Z)$. We propose to demonstrate this conjecture by proving that $\Ktp$ contains the $\qt$-characters of the fundamental representations $[L(Y_{i,q^r})]_t$, as they generate $K_t(\C_\Z)$. We state in Conjecture \ref{conjchir} that these objects can be obtained in $\Ktp$ as quantum cluster variables, by following the same finite sequences of mutations used in the classical cluster algebra $K_0(\cO^+_\Z)$ to obtain the $[L(Y_{i,q^r})]$. Naturally, Conjecture \ref{conjchir} implies Conjecture \ref{conj}.
Finally, we prove Conjecture \ref{conjchir} (and thus Conjecture \ref{conj}) in the case where the underlying simple Lie algebra $\g$ is of type $A$ (Theorem \ref{theoA}). The proof is based on the thinness property of the fundamental representations in this case. When $\g=\mathfrak{sl}_2$, some explicit computations are possible. For example, we give a quantum version of the Baxter relation (\ref{baxterq}), for all $r\in \Z$,
\begin{equation*}
[V_{q^{2r-1}}]_t\ast[L_{1,q^{2r}}^+]_t = t^{-1/2}[\omega_1][L_{1,q^{2r-2}}^+]_t + t^{1/2}[-\omega_1][L_{1,q^{2r+2}}^+]_t. 
\end{equation*}
Additionally, we realize a part of the quantum cluster algebra we built as a quotient of the quantum group $\mathcal{U}_q(\mathfrak{sl}_2)$. This is a reminiscence of the result of Qin \citep{QIN} who constructed $\mathcal{U}_q(\g)$ as a quotient of the Grothendieck ring arising from certain cyclic quiver varieties.

The paper is organized as follows. The first three sections are mostly reminders. In Section \ref{sectclusterqcluster} we recall some background on cluster algebras and quantum cluster algebras, including some recent and important results, such as the positivity Theorem in Section \ref{sectpos}, which we require later on. In Section \ref{sectCartan} we introduce some notations, the usual notations for the Cartan data associated to a simple Lie algebra, as well as what we call \emph{quantum Cartan data}, which is related to the quantum Cartan matrix and its inverse. In Section \ref{sectO} we review some results for the category $\cO$, its subcategories $\cO^\pm$ and $\cO^\pm_\Z$ and their Grothendieck rings. In Section \ref{sectQuantumTori}, after recalling the definition of the quantum torus $\Y_t$, we define the quantum torus $\T_t$ in which $\Ktp$ will be constructed and we prove the inclusion of the quantum tori. In Section \ref{sectQuantumGrothendieckRings} we prove that we have all the elements to build a quantum cluster algebra and we define the quantum Grothendieck ring $\Ktp$. In the concluding Section \ref{sectPropertiesKtp}, we state some properties of the quantum Grothendieck ring. We present the two conjectures regarding the inclusion of the quantum Grothendieck rings in Section \ref{sectInclusion}. Finally, in Section \ref{sectA} we prove these conjectures in type $A$,  and we prove finer properties specific to the case when $\g=\mathfrak{sl}_2$.

 \vspace{0.4cm}

\textit{The author is supported by the European Research Council under the European Union's Framework Programme 
H2020 with ERC Grant Agreement number 647353 Qaffine.}

	\section{Cluster algebras and quantum cluster algebras}\label{sectclusterqcluster}

Cluster algebras were defined by Fomin and Zelevinsky in the 2000's in a series of fundamental papers \citep{CA1},\citep{CA2},\citep{CA3} and \citep{CA4}. They were first introduced to study total positivity and canonical bases in quantum groups but soon applications to many different fields of mathematics were found.

In \citep{qCA}, Berenstein and Zelevinsky introduced natural non-commutative deformations of cluster algebras called \emph{quantum cluster algebras}. 

In this section, we recall the definitions of these objects. The interested reader may refer to the aforementioned papers for more details, or to reviews, such as \citep{CAnotes}.

		\subsection{Cluster algebras}\label{sectcluster}

Let $m\geq n$ be two positive integers and let $\F$ be the field of rational functions over $\mathbb{Q}$ in $m$ independent commuting variables. Fix of subset $\textbf{ex} \subset \llbracket 1,m\rrbracket$ of cardinal $n$.

\begin{defi}
A \emph{seed} in $\F$ is a pair $(\tilde{\pmb x}, \tilde{B})$, where
\begin{itemize}
	\item[•] $\tilde{\pmb x}=\{ x_1,\ldots,x_m\}$ is an algebraically independent subset of $\F$ which generates $\F$.
	\item[•] $\tilde{B}=(b_{i,j})$ of $\tilde{B}$ is a $m\times n$ integer matrix with rows labeled by $\llbracket 1,m\rrbracket$ and columns labeled by $\textbf{ex}$ such that
	\begin{enumerate}
		\item the $n\times n$ submatrix $B=(b_{ij})_{i,j\in \textbf{ex}}$ is skew-symmetrizable.
		\item $\tilde{B}$ has full rank $n$.
	\end{enumerate}	 
\end{itemize}

The matrix $B$ is called the \emph{principal part} of $\tilde{B}$, $\pmb x = \{x_j \mid j\in \textbf{ex} \} \subset \tilde{\pmb x}$ is the \emph{cluster} of the seed $(\tilde{\pmb x}, \tilde{B})$, $\textbf{ex}$ are the \emph{exchangeable indices} and $\pmb c = \tilde{\pmb x} \smallsetminus \pmb x$ is the set of \emph{frozen variables}.

For all $k\in\textbf{ex}$, define the \emph{seed mutation} in direction $k$ as the transformation from $(\tilde{\pmb x}, \tilde{B})$ to $\mu_k(\tilde{\pmb x}, \tilde{B}) = (\tilde{\pmb x'}, \tilde{B'})$, with
\begin{itemize}
	\item[•] $\tilde{B}'=\mu_k(\tilde{B})$ is the $m\times n$ matrix whose entries are given by
	\begin{equation}
b_{ij}'= \left\lbrace \begin{array}{ll}
				-b_{ij} & \text{ if } i=k \text{ or } j=k, \\
				b_{ij} + \frac{|b_{ik}|b_{kj}+ b_{ik}|b_{kj}|}{2} & \text{ otherwise}.
\end{array}\right.
	\end{equation}
This operation is called \emph{matrix mutation} in direction $k$.
This matrix can also be obtained via the operation
\begin{equation}\label{eqBtilde}
\tilde{B}' = \mu_k(\tilde{B}) = E_k\tilde{B}F_k,
\end{equation}
where $E_k$ is the $m\times m$ matrix with entries
\begin{equation}\label{Ek}
(E_k)_{ij} = \left\lbrace \begin{array}{cl}
	\delta_{ij} & \text{ if } j \neq k, \\
	-1 & \text{ if } j=i=k, \\
	\max(0, -b_{ik}) & \text{ if } i\neq j =k,
\end{array}\right.
\end{equation}
and $F_k$ is the $n\times n$ matrix with entries
\begin{equation}\label{Fk}
(F_k)_{ij} = \left\lbrace \begin{array}{cl}
	\delta_{ij} & \text{ if } i \neq k, \\
	-1 & \text{ if } j=i=k, \\
	\max(0, b_{kj}) & \text{ if } i =k\neq j.
\end{array}\right.
\end{equation}
	\item[•] $\tilde{\pmb x}' = \left(\tilde{\pmb x} \smallsetminus \{x_k\}\right) \cup \{x_k'\}$, where $x_k'\in\F$ is determined by the \emph{exchange relation}
\begin{equation}
x_k x_k'= \prod_{\substack{i\in [1,m] \\ b_{ik}>0}} x_i^{b_{ik}} + \prod_{\substack{i\in [1,m] \\ b_{ik}<0}} x_i^{-b_{ik}}.
\end{equation}
\end{itemize}
\end{defi}
\begin{rem}
$(\tilde{\pmb x}',\tilde{B}')$ is also a seed in $\F$ and the seed mutation operation is involutive: $\mu_k(\tilde{\pmb x}',\tilde{B}')=(\tilde{\pmb x},\tilde{B})$. Thus, we have a equivalence relation: $(\tilde{\pmb x},\tilde{B})$ is \emph{mutation-equivalent} to $(\tilde{\pmb x}',\tilde{B}')$, denoted by $(\tilde{\pmb x},\tilde{B})\sim(\tilde{\pmb x}',\tilde{B}')$, if $(\tilde{\pmb x}',\tilde{B}')$ can be obtained from $(\tilde{\pmb x},\tilde{B})$ by a finite sequence of seed mutations.
\end{rem}

Graphically, if the matrix $\tilde{B}$ is skew-symmetric, it can be represented by a quiver and the matrix mutation by a simple operation on the quiver. Fix $\tilde{B}$ a skew-symmetric matrix. Define the quiver $Q$ whose set of vertices is $\llbracket 1,m\rrbracket$, where the vertices corresponding to $\pmb c$ are usually denoted by a square $\Square$ and called \emph{frozen vertices}. For all $i\in \llbracket 1,m\rrbracket$, $j\in\textbf{ex}$, $b_{ij}$ is the number of arrows from $i$ to $j$ (can be negative if the arrows are from $j$ to $i$).
\[
\begin{tikzpicture}
\draw (0,0) node[label=below:{$i$}]{$\bullet$} ;
\draw (2,0) node[label=below:{$j$}]{$\bullet$};
\draw[->-] (0,0) to (2,0) ;
\draw (1,0) node[above]{$b_{ij}$};
\end{tikzpicture}
\]
In this context, the operation of matrix mutation can be translated naturally to an operation on the quiver $Q$. For $k\in\textbf{ex}$, the quiver $Q'=\mu_k(Q)$ is obtained from $Q$ by the following operations:
\begin{itemize}
	\item[•] For each pair of arrows $i \to k \to j$ in $Q$, create an arrow from $i$ to $j$.
	\item[•] Invert all arrows adjacent to $k$.
	\item[•] Remove all 2-cycles that were possibly created.
\end{itemize}

\begin{defi}
Let $\mathcal{S}$ be a mutation-equivalence class of seeds in $\F$. The \emph{cluster algebra} $\mathcal{A}(\mathcal{S})$ associated to $\mathcal{S}$ is the $\Z[\pmb c^\pm]$-subalgebra of $\F$ generated by all the clusters of all the seeds in $\mathcal{S}$.
\end{defi}

	\subsection{Compatible pairs}\label{sectcompair}

A quantum cluster algebra is a non-commutative version of a cluster algebra. Cluster variables will not commute anymore, but, if they are in the same cluster, commute up to some power of an indeterminate $t$. These powers can be encoded in a skew-symmetric matrix $\Lambda$. In order for the quantum cluster algebra to be well-defined, one needs to check that these $t$-commutation relations behave well with the exchange relations. This is made explicit via the notion of compatible pairs.

\begin{defi}\label{defcompair}
Let $\tilde{B}$ be a $m\times n$ integer matrix, with rows labeled by $\llbracket 1,m\rrbracket$ and columns labeled by $\textbf{ex}$. Let $\Lambda = (\lambda_{ij})_{1\leq i,j\leq m}$ be a skew-symmetric $m\times m$ integer matrix. We say that $(\Lambda,\tilde{B})$ forms a \emph{compatible pair} if, for all $i\in \textbf{ex}$ and $1\leq j \leq m$, we have
\begin{equation}\label{compair}
\sum_{k=1}^m b_{ki}\lambda_{kj} = \delta_{i,j}d_i,
\end{equation}
with $(d_i)_{i\in\textbf{ex}}$ some positive integers. Relation (\ref{compair}) is equivalent to saying that, up to reordering, if $\textbf{ex}= \llbracket 1,n\rrbracket$, the matrix $\tilde{B}^T\Lambda$ consists of two blocks, a diagonal $n\times n$ block, and a $n\times (m-n)$ zero block:
\begin{equation*}
\left( \begin{array}{cccc|cccc}
d_1 & & & & & & & \\
& d_2 & & & & & & \\
& &  \ddots & & & (0) & \\
& & & d_n & & & &
\end{array} \right)
\end{equation*}
\end{defi}

Fix a compatible pair $(\Lambda,\tilde{B})$ and fix $k\in \textbf{ex}$. Define, in a similar way as in (\ref{eqBtilde}),
\begin{equation}
\Lambda' = \mu_k(\Lambda) : = E_k^T\Lambda E_k,
\end{equation}
with $E_k$ from (\ref{Ek}).

\begin{prop}[\citep{qCA}]
The pair $(\Lambda', \tilde{B}')$ is compatible.
\end{prop}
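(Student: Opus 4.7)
The plan is to verify the two conditions in Definition \ref{defcompair} for $(\Lambda', \tilde B')$: first that $\Lambda'$ is skew-symmetric, and second that $(\tilde B')^T \Lambda'$ has the required block form with the same diagonal $d_i$'s.

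Skew-symmetry is a one-line check:
\begin{equation*}
(\Lambda')^T = (E_k^T \Lambda E_k)^T = E_k^T \Lambda^T E_k = -E_k^T \Lambda E_k = -\Lambda'.
\end{equation*}

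For the compatibility relation the plan is to rewrite $(\tilde B')^T \Lambda'$ in a form where one can directly read off the claim from the hypothesis $\tilde B^T \Lambda = [D \mid 0]$ (after reordering so $\textbf{ex} = \llbracket 1,n\rrbracket$), with $D = \diag(d_1,\dots,d_n)$. The first step is to establish the involutivity identity $E_k^2 = I_m$ by a direct check from formula (\ref{Ek}): off the $k$-th column $E_k$ acts as the identity, and in the $k$-th column the sign of the $(k,k)$ entry cancels the off-diagonal contributions. Transposing, this yields $E_k^T E_k^T = I_m$, and the key simplification
\begin{equation*}
(\tilde B')^T \Lambda' = (E_k\tilde B F_k)^T (E_k^T \Lambda E_k) = F_k^T \tilde B^T (E_k^T E_k^T)\Lambda E_k = F_k^T(\tilde B^T \Lambda) E_k = F_k^T [D\mid 0]\, E_k.
\end{equation*}

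It remains to show that $F_k^T [D\mid 0]\, E_k = [D\mid 0]$. Because $E_k$ is the identity away from its $k$-th column and $F_k^T$ is the identity away from its $k$-th column, the only columns that can change are the $k$-th column and those of index $> n$; the latter are zero both before and after. For the columns $j \in \textbf{ex}\setminus\{k\}$, one checks immediately that they are fixed and equal to $d_j e_j$. The content of the verification is then the $(l,k)$-entry of $F_k^T [D\mid 0] E_k$ for $l\in \textbf{ex}$, which after expanding the definitions of $E_k$ and $F_k^T$ equals
\begin{equation*}
-\,d_k \max(0,b_{kl}) + d_l \max(0,-b_{lk}) \qquad (l\neq k),
\end{equation*}
and $d_k$ for $l=k$. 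The main point—and essentially the only nontrivial step—is to show that the off-diagonal expression vanishes; this is exactly where the skew-symmetrizability of $B$ enters. Using $d_l b_{lk} = -d_k b_{kl}$, a short case analysis on the sign of $b_{kl}$ (positive, negative, zero) shows that both terms carry the same value, so the expression is $0$. This completes the verification that $(\tilde B')^T \Lambda' = [D\mid 0]$ with the same $d_i$'s, establishing compatibility.

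The main obstacle is really just bookkeeping: keeping track of the row/column index conventions and exploiting $E_k^2 = I$ to reduce the four-matrix product to a two-matrix product. Once that reduction is made the rest follows from skew-symmetrizability of $B$, a hypothesis that is built into the definition of seed but which, in the compatibility framework, turns out to be forced: the diagonal integers $d_i$ appearing in (\ref{compair}) coincide with the skew-symmetrizing constants, and the argument above both uses and preserves this fact.
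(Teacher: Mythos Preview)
The paper does not supply its own proof of this proposition; it is quoted from \citep{qCA} without argument. Your proof is correct and is essentially the one given by Berenstein--Zelevinsky (Proposition~3.4 of \citep{qCA}): use the involutivity $E_k^2=I_m$ to collapse $(\tilde B')^T\Lambda'=F_k^T\tilde B^T E_k^T\cdot E_k^T\Lambda E_k$ to $F_k^T(\tilde B^T\Lambda)E_k$, and then check the single nontrivial column by invoking the skew-symmetrizing identity $d_l b_{lk}=-d_k b_{kl}$, which---as you correctly observe---is itself a consequence of compatibility (via the skew-symmetry of $\tilde B^T\Lambda\tilde B$). One minor wording slip: multiplying by $F_k^T$ on the \emph{left} mixes rows, not columns, so the phrase ``$F_k^T$ is the identity away from its $k$-th column, the only columns that can change\ldots'' is a bit off; but your subsequent entrywise computation of the $(l,k)$-entry is correct and unaffected by this.
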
 

We say that $(\Lambda', \tilde{B}')$ is the \emph{mutation} in direction $k$ of the pair $(\Lambda, \tilde{B})$, and we will use the notation:
\begin{equation}
\mu_k(\Lambda, \tilde{B})  := \left(\mu_k(\Lambda), \mu_k(\tilde{B})\right) = (\Lambda', \tilde{B}').
\end{equation}

\begin{prop}[\citep{qCA}]
The mutation of a compatible pair is involutive. For any compatible pair $(\Lambda, \tilde{B})$ and any mutation direction $k\in \textbf{ex}$, $\mu_k(\mu_k(\Lambda, \tilde{B})) = (\Lambda, \tilde{B})$.
\end{prop}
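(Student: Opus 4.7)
The plan is to verify $\mu_k^2(\Lambda,\tilde{B}) = (\Lambda,\tilde{B})$ by a direct matrix computation, separating the $\tilde{B}$-part (where the classical matrix-mutation involutivity of Fomin-Zelevinsky suffices) from the $\Lambda$-part (where compatibility must enter). The crucial notational point is that when we apply $\mu_k$ the second time, the matrices $E_k$ and $F_k$ are computed from $\tilde{B}' = \mu_k(\tilde{B})$, not from the original $\tilde{B}$. I will write $E_k^+, F_k^+$ for the matrices built from $\tilde{B}$ and $E_k^-, F_k^-$ for those built from $\tilde{B}'$. Since $b'_{ik} = -b_{ik}$ for $i\ne k$, we read off $(E_k^-)_{ik} = \max(0,b_{ik})$ and $(F_k^-)_{kj} = \max(0,-b_{kj})$, so these matrices differ from $E_k^+, F_k^+$ only in the $k$-th column, respectively $k$-th row.

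The first computational step is to multiply. A straightforward entry-by-entry check, using the fact that column $k$ of $E_k^+ E_k^-$ equals $\max(0,b_{ik}) - \max(0,-b_{ik}) = b_{ik}$ for $i\ne k$ and $1$ for $i=k$, yields
\begin{equation*}
E_k^+ E_k^- = I_m + \tilde{b}_k e_k^T, \qquad E_k^- E_k^+ = I_m - \tilde{b}_k e_k^T, \qquad F_k^+ F_k^- = I_n + e_k r_k^T,
\end{equation*}
where $\tilde{b}_k$ is the $k$-th column of $\tilde{B}$ (whose $k$-th entry is $b_{kk}=0$) and $r_k$ is the $k$-th row of $\tilde{B}$ regarded as a column vector. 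Then $\mu_k^2(\tilde{B}) = (E_k^- E_k^+)\tilde{B}(F_k^+ F_k^-) = (I - \tilde{b}_k e_k^T)\tilde{B}(I + e_k r_k^T)$; expanding and using $\tilde{B} e_k = \tilde{b}_k$, $e_k^T \tilde{B} = r_k^T$, and $r_k^T e_k = b_{kk} = 0$, the two cross terms cancel and the quadratic term vanishes, giving back $\tilde{B}$.

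For the $\Lambda$-part, $\mu_k^2(\Lambda) = (E_k^+ E_k^-)^T \Lambda (E_k^+ E_k^-) = (I + e_k \tilde{b}_k^T)\Lambda(I + \tilde{b}_k e_k^T)$. Here the compatibility condition $\tilde{B}^T\Lambda = [D\,|\,0]$ is precisely what is needed: it tells us $\tilde{b}_k^T\Lambda = d_k e_k^T$ (the $k$-th row of $\tilde{B}^T\Lambda$ for $k\in\textbf{ex}$), and by skew-symmetry of $\Lambda$ also $\Lambda\tilde{b}_k = -d_k e_k$. Expanding the product, the two cross-terms become $d_k e_k e_k^T$ and $-d_k e_k e_k^T$, which cancel, and the remaining quadratic contribution $e_k(\tilde{b}_k^T\Lambda\tilde{b}_k)e_k^T = d_k b_{kk}\, e_k e_k^T = 0$ also vanishes. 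Therefore $\mu_k^2(\Lambda) = \Lambda$.

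I expect the only difficulty to be bookkeeping: correctly tracking the two variants $E_k^\pm, F_k^\pm$, the order of multiplication in $\mu_k^2$, and the sign pattern in the identity $\max(0,x) - \max(0,-x) = x$. Conceptually, everything reduces to the rank-one correction formulas above together with the single application of the compatibility identity; no new idea beyond Definition \ref{defcompair} is required.
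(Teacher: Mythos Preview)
Your proof is correct. The paper itself does not supply a proof of this proposition: it is quoted from \citep{qCA} (Berenstein--Zelevinsky, Proposition~3.4) without argument, so there is no ``paper's own proof'' to compare against. Your direct matrix computation---tracking the two variants $E_k^\pm,F_k^\pm$, reducing the products to rank-one perturbations of the identity, and invoking compatibility $\tilde{b}_k^T\Lambda = d_k e_k^T$ together with $b_{kk}=0$---is exactly the standard verification and matches the approach in the original source.
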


	\subsection{Definition of quantum cluster algebras}\label{sectqcluster}

We now introduce the last notions we need in order to define quantum cluster algebras.

Let $t$ be a formal variable. Consider $\Z[t^{\pm 1/2}]$, the ring of Laurent polynomials in the variable $t^{1/2}$. 

Recall that any skew-symmetric integer matrix $\Lambda$ of size $m\times m$ determines a skew-symmetric $\Z$-bilinear form on $\Z^m$, which will also be denoted by $\Lambda$:
\begin{equation}
\Lambda(\e_i,\e_j) := \lambda_{i,j}, \quad \forall i,j \in\llbracket 1,m \rrbracket,
\end{equation}
where $\{\e_i\mid 1 \leq i \leq m\}$ is the standard basis of $\Z^m$.

\begin{defi}
The \emph{quantum torus} $\mathcal{T}=(\mathcal{T}(\Lambda),\ast)$ associated with the skew-symmetric bilinear form $\Lambda$ is the  $\Z[t^{\pm 1/2}]$-algebra generated by the $\{ X^\e \mid \e \in \Z^m \}$, together with the $t$-commuting relations:
\begin{equation}
X^\e\ast X^\f = t^{\Lambda(\e,\f)/2}X^{\e+\f} = t^{\Lambda(\e,\f)}X^\f\ast X^\e, \quad \forall \e,\f\in \Z^m.
\end{equation} 
\end{defi}

The quantum torus $\mathcal{T}(\Lambda)$ is an Ore domain (see details in \citep{qCA}), thus it is contained in its skew-field a fractions $\F=(\F,\ast)$. The field $\F$ is a $\Q(t^{1/2})$-algebra.

\begin{defi}
A \emph{toric frame} in $\F$ is a map $M: \Z^m \to \F\setminus \{0\}$ of the form
\begin{equation}
M(c) = \phi(X^{\eta(c)}), \quad\forall c \in\Z^m,
\end{equation}
where $\phi: \F \to \F$ is a $\Q(t^{1/2})$-algebra automorphism and $\eta: \Z^m\to \Z^m$ is an isomorphism of $\Z$-modules.
\end{defi}

For any toric frame $M$, define $\Lambda_M : \Z^m\times \Z^m\to \Z$, a skew-symmetric bilinear form, by
\begin{equation}
\Lambda_M(\e,\f)= \Lambda(\eta(\e),\eta(\f)), \quad\forall\e,\f\in \Z^m.
\end{equation}

Then,
\begin{equation}
M(\e)\ast M(\f)= t^{\Lambda_M(\e,\f)/2}M(\e+\f)=t^{\Lambda_M(\e,\f)}M(\f)\ast M(\e). 
\end{equation}

\begin{defi}
A \emph{quantum seed} in $\F$ is a pair $(M,\tilde{B})$, where
\begin{itemize}
	\item[•] $M$ is a toric frame in $\F$,
	\item[•] $\tilde{B}$ is an $m \times \textbf{ex}$ integer matrix,
	\item[•] the pair $(\Lambda_M,\tilde{B})$ is compatible, as in Definition \ref{defcompair}.
\end{itemize}
\end{defi}

Next, we need to define mutations of quantum seeds. Let $(M,\tilde{B})$ be a quantum seed, and fix $k\in\textbf{ex}$. Define $M':\Z^m \to \F\setminus \{0\}$ by setting
\begin{equation*}
M'(\f)   = \left\lbrace \begin{array}{ll}
\sum_{p=0}^k\binom{f_k}{p}_{t^{d_k/2}}M(E_k\f + pb^k) & \text{if } f_k\geq 0,\\
 M'(-\f)^{-1} & \text{otherwise},
\end{array}\right.
\end{equation*}
where $E_k$ is the matrix from (\ref{Ek}), $b^k\in\Z^m$ is the $k$th column of $\tilde{B}$ and the $t$-binomial coefficient is defined by
\begin{equation}
\binom{r}{p}_t:=\frac{(t^r-t^{-r})(t^{r-1}-t^{-r+1})\cdots (t^{r-p+1})-t^{-r+p-1}}{(t^p-t^{-p})(t^{p-1}-t^{-p+1})\cdots (t-t^{-1})}, \quad \forall 0\leq p\leq r.
\end{equation}
Recall the definition of the mutated matrix $\tilde{B}'=\mu_k(\tilde{B})$ from Section \ref{sectcluster}. Then the \emph{mutation} in direction $k$ of the quantum seed $(M,\tilde{B})$ is the pair $\mu_k(M,\tilde{B})=(M',\tilde{B}')$

\begin{prop}[\citep{qCA}]
\begin{enumerate}[(1)]
	\item The pair $(M',\tilde{B}')$ is a quantum seed.
	\item The mutation in direction $k$ of the compatible pair $(\Lambda_M,\tilde{B})$ is the pair $(\Lambda_{M'},\tilde{B}')$.
\end{enumerate}
\end{prop}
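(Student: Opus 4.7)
The proposition has two parts. Part (2), that $(\Lambda_{M'}, \tilde{B}') = \mu_k(\Lambda_M, \tilde{B})$, is essentially a matrix identity that reduces to the earlier proposition on compatible pairs. The natural choice is $\eta' := \eta \circ E_k$, which is a $\Z$-module isomorphism because $E_k^2 = \mathrm{Id}$ (a short direct check from the definition of $E_k$: the only non-trivial column is the $k$-th, and applying $E_k$ to it recovers $\e_k$). The form $\Lambda_{M'}$ is then computed via $\Lambda_{M'}(\e_i, \e_j) = \Lambda(\eta'(\e_i), \eta'(\e_j)) = \Lambda_M(E_k \e_i, E_k \e_j)$, so as a matrix $\Lambda_{M'} = E_k^T \Lambda_M E_k = \mu_k(\Lambda_M)$. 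Combined with the definitional identity $\tilde{B}' = \mu_k(\tilde{B})$ and the earlier proposition that mutation preserves compatibility, this yields both (2) and the compatibility of the new pair.

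For part (1), it remains to show $M'$ is a toric frame with this $\eta'$, i.e., to exhibit a $\Q(t^{1/2})$-algebra automorphism $\phi'$ of $\F$ satisfying $M'(\f) = \phi'(X^{\eta'(\f)})$ for all $\f \in \Z^m$. I would define $\phi'$ on generators by $\phi'(X^{\eta'(\e_i)}) := M(\e_i)$ for $i \neq k$ and $\phi'(X^{\eta'(\e_k)}) := M(E_k\e_k) + M(E_k\e_k + b^k)$, then extend to $\F$ by the universal property of the skew-field of fractions. Well-definedness as an algebra map requires the assigned images to $t$-commute according to $\Lambda_{M'}$; the only non-trivial case is the commutation of $M(\e_i)$ (for $i \neq k$) with the two summands in the second assignment, which reduces to $\Lambda_M(\e_i, b^k) = 0$---precisely a special case of the compatibility of $(\Lambda_M, \tilde{B})$. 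Invertibility of $\phi'$ follows from involutivity of mutation: its inverse is obtained by the analogous construction applied to $(M', \tilde{B}')$ in direction $k$.

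The remaining step---verifying $M'(\f) = \phi'(X^{\eta'(\f)})$ for general $\f$---is where the binomial formula enters. For $f_k \geq 0$, writing the right-hand side as $\phi'(X^{\eta'(\e_k)})^{f_k}$ multiplied (up to an appropriate $t$-power) by the monomial $\phi'(X^{\eta'(\f - f_k\e_k)}) = M(E_k(\f - f_k\e_k))$, one expands the power $\bigl(M(E_k\e_k) + M(E_k\e_k + b^k)\bigr)^{f_k}$ using the \emph{quantum binomial theorem}. This requires the two summands to $t^{d_k}$-commute, an identity equivalent to $\Lambda_M(E_k\e_k, b^k) = -d_k$, which in turn unwinds to the diagonal entry $d_k$ of $\tilde{B}^T\Lambda_M$ supplied by compatibility. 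The case $f_k < 0$ follows from $M'(-\f) = M'(\f)^{-1}$, automatic for any toric-frame expression. I expect the main obstacle to be the careful bookkeeping of $t$-powers in this expansion---not the quantum binomial theorem itself, but ensuring that after collecting terms one recovers exactly $\sum_p \binom{f_k}{p}_{t^{d_k/2}} M(E_k\f + pb^k)$, because the $t$-factors from the commutations of the $M$'s interact subtly with the $t$-factors produced by the binomial coefficients.
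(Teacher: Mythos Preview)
The paper does not prove this proposition at all: it is stated with the citation \texttt{[qCA]} (Berenstein--Zelevinsky) and no proof is given, so there is nothing in the paper to compare your argument against. Your proof plan is nonetheless the standard one and matches in outline the proof in the cited reference: reduce Part~(2) to the matrix identity $\Lambda_{M'}=E_k^T\Lambda_M E_k$ via $\eta'=\eta\circ E_k$, then for Part~(1) build $\phi'$ on generators, check the $t$-commutation of $M(\e_i)$ with each summand of $M'(\e_k)$ using compatibility ($\Lambda_M(\e_i,b^k)=0$ for $i\neq k$), and recover the general formula for $M'(\f)$ from the quantum binomial expansion, the key input being $\Lambda_M(E_k\e_k,b^k)=-d_k$. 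Your identification of the delicate point---tracking the $t$-powers so that the expansion collapses to $\sum_p\binom{f_k}{p}_{t^{d_k/2}}M(E_k\f+pb^k)$---is accurate; this is exactly where the work lies in the original proof.
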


For a quantum seed $(M,\tilde{B})$, let $\tilde{\textbf{X}} = \{X_1,\ldots,X_m\}$ be the free generating set of $\F$, given by $X_i:=M(\e_i)$. Let $\textbf{X} =\{ X_i \mid i \in \textbf{ex} \}$, we call it the \emph{cluster} of the quantum seed $(M,\tilde{B})$, and let $\textbf{C}=\tilde{\textbf{X}}\setminus \textbf{X}$. 

For all $k\in\textbf{ex}$, if $(M',\tilde{B}')=\mu_k(M,\tilde{B})$, then the $X_i'=M'(\e_i)$ are obtained by:
\begin{equation}\label{qmut}
X_i' =\left\lbrace
\begin{array}{ll}
 X_i  & \text{ if } i \neq k, \\
 M\left( - \e_k + \sum_{b_{ik}>0}b_{ik}\e_i\right) +M\left( - \e_k - \sum_{b_{ik}<0}b_{ik}\e_i\right) & \text{ if } i = k.
\end{array}\right.
\end{equation}

The mutation of quantum seeds, as the mutation of compatible pairs, is an involutive process: $\mu_k(M',\tilde{B}')=(M,\tilde{B})$. Thus, as before, we have an equivalence relation: two quantum seeds $(M_1,\tilde{B}_1)$ and $(M_2,\tilde{B}_2)$ are \emph{mutation equivalent} if $(M_2,\tilde{B}_2)$ can be obtained from $(M_1,\tilde{B}_1)$ by a sequence of quantum seed mutations. From (\ref{qmut}), the set $\textbf{C}$ only depends on the mutation equivalence class of the quantum seed. The variables in $\textbf{C}$, $(X_i)_{i\notin\textbf{ex}}$, are called the \emph{frozen variable} of the mutation equivalence class.

\begin{defi}
Let $\mathcal{S}$ be a mutation equivalence class of quantum seeds in $\F$ and $\textbf{C}$ the set of its frozen variables. The \emph{quantum cluster algebra} $\mathcal{A}(\mathcal{S})$ associated with $\mathcal{S}$ is the $\Z[t^{1/2}]$-subalgebra of the skew-field $\F$ generated by the union of all clusters in all seeds in $\mathcal{S}$, together with the elements of $\textbf{C}$ and their inverses.
\end{defi}

		\subsection{Laurent phenomenon and quantum Laurent phenomenon}\label{sectLP}

One of the main properties of cluster algebras is the so-called \emph{Laurent Phenomenon} which was formulated in \citep{CA3}. Quantum cluster algebras present a counterpart to this result called the \emph{Quantum Laurent Phenomenon}.

Here, we follow \citep[Section 5]{qCA}. In order to state this result, one needs the notion of \emph{upper cluster algebras}.

Fix $(M,\tilde{B})$ a quantum seed, and $\tilde{\textbf{X}}=\{ X_1,\ldots,X_m\}$ given by $X_k=M(e_k)$. Let $\Z\mathbb{P}[\textbf{X}^{\pm 1}]$ denote the based quantum torus generated by the $(X_k)_{1\leq k\leq m}$; it is a $\Z[t^{\pm 1/2}]$-subalgebra of $\F$ with basis $\{ M(c) \mid c\in \Z^m \}$, such that the ground ring $\Z\mathbb{P}$ is the ring of integer Laurent polynomials in the variables $t^{1/2}$ and $(X_j)_{j\notin \textbf{ex}}$. For $k\in \textbf{ex}$, let $(M_k,\tilde{B}_k)$ be the quantum seed obtained from $(M,\tilde{B})$ by mutation in direction $k$, and let $\textbf{X}_k$ denote its cluster, thus:
\begin{equation*}
\textbf{X}_k = \left(\textbf{X} \setminus \{X_k\}\right)\cup\{X_k'\}.
\end{equation*}
Define the quantum upper cluster algebra as the $\Z\mathbb{P}$-subalgebra of $\F$ given by
\begin{equation}
\mathcal{U}(M,\tilde{B}) := \Z\mathbb{P}[\textbf{X}^{\pm 1}] \cap \bigcap_{k\in\textbf{ex}} \Z\mathbb{P}[\textbf{X}^{\pm 1 }_k].
\end{equation}

\begin{theo}\citep[Theorem 5.1]{qCA}\label{theoqphen}
The quantum upper algebra $\mathcal{U}(M,\tilde{B})$ depends only on the mutation-equivalence class of the quantum seed $(M,\tilde{B})$.
\end{theo}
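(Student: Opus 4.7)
The plan is to induct on the length of a mutation sequence relating two quantum seeds; this reduces the statement to a single mutation, i.e.\ to showing that $\mathcal{U}(M,\tilde{B}) = \mathcal{U}(\mu_\ell(M,\tilde{B}))$ for every $\ell \in \textbf{ex}$. Since quantum seed mutation is involutive, it suffices to prove one inclusion, say $\mathcal{U}(M,\tilde{B}) \subseteq \mathcal{U}(\mu_\ell(M,\tilde{B}))$, the reverse being obtained by exchanging the roles of the two seeds.

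Set $(M',\tilde{B}') = \mu_\ell(M,\tilde{B})$, write $\textbf{Y}$ for its cluster, and let $\textbf{Y}_k$ denote the cluster of $\mu_k(M',\tilde{B}')$. The ground ring $\Z\mathbb{P}$ is unchanged by mutation, since the frozen variables are fixed. For $f \in \mathcal{U}(M,\tilde{B})$, membership in $\Z\mathbb{P}[\textbf{Y}^{\pm 1}]$ is automatic because $\textbf{Y}=\textbf{X}_\ell$, and membership in $\Z\mathbb{P}[\textbf{Y}_\ell^{\pm 1}]$ is automatic because $\textbf{Y}_\ell=\textbf{X}$ by involutivity. The content of the theorem is therefore concentrated in the following triple-intersection lemma: for each $k \in \textbf{ex}$ with $k \neq \ell$, if $f \in \Z\mathbb{P}[\textbf{X}^{\pm 1}] \cap \Z\mathbb{P}[\textbf{Y}^{\pm 1}] \cap \Z\mathbb{P}[\textbf{X}_k^{\pm 1}]$, then $f \in \Z\mathbb{P}[\textbf{Y}_k^{\pm 1}]$.

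To prove the lemma, I would expand $f$ in the $\textbf{Y}$-basis and then transform to the $\textbf{Y}_k$-basis using the single quantum exchange relation of shape $Y_k \ast Y_k' = M'(\,\cdot\,) + M'(\,\cdot\,)$ supplied by the compatible pair $(\Lambda_{M'},\tilde{B}')$. Only the variable $Y_k$ is affected, so $f$ lies \emph{a priori} in $\Z\mathbb{P}[\textbf{Y}_k^{\pm 1}][Y_k^{\pm 1}]$, with at worst a denominator of the form $Y_k^N$. To force $N=0$, one uses the two remaining hypotheses: the expansions of $f$ in the bases $\textbf{X}$ and $\textbf{X}_k$ produce a second factorization of $f$ modulo the exchange relation at $k$ in the cluster $\textbf{X}$, and comparing the two exchange relations yields a divisibility that cancels $Y_k^N$.

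The main obstacle is precisely this last divisibility step. Classically (Berenstein--Fomin--Zelevinsky), the corresponding argument reduces to the coprimality of the two exchange binomials at $k$, expressed in the common variables $\textbf{X} \cap \textbf{Y}$, inside an ordinary polynomial ring. In the quantum setting the ambient object is the non-commutative skew-field $\F$, so plain coprimality must be replaced by a $t$-twisted version. Here the compatibility relation (\ref{compair}) is essential: it controls exactly the $t$-commutation factors appearing when one passes between monomial expressions in the two clusters, and after an explicit $t$-shift it yields the quantum analogue of coprimality that the divisibility argument requires. The full-rank hypothesis on $\tilde{B}$ guarantees that the relevant quantum tori are Ore domains, so that divisibility inside $\F$ is well-defined and the conclusion of the lemma can be cleanly extracted.
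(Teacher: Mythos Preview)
The paper does not prove this theorem: it is quoted verbatim from Berenstein--Zelevinsky \citep[Theorem 5.1]{qCA} and used as a black box, so there is no ``paper's own proof'' against which to compare your proposal.

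That said, your sketch is a faithful outline of the original Berenstein--Zelevinsky argument. The reduction to a single mutation via involutivity, the observation that $\textbf{Y}=\textbf{X}_\ell$ and $\textbf{Y}_\ell=\textbf{X}$ make two of the intersections automatic, and the identification of the triple-intersection lemma as the heart of the matter are exactly how their proof proceeds. Your description of the final divisibility step is accurate in spirit: in \citep{qCA} the classical coprimality argument is replaced by an explicit computation showing that the two quantum exchange binomials, after suitable $t$-shifts governed by the compatibility relation, generate the unit ideal in an auxiliary Ore localization. One small clarification: the full-rank hypothesis on $\tilde{B}$ is not what makes the quantum torus an Ore domain (that holds for any based quantum torus); rather, it ensures the compatibility relation determines the $d_i$ uniquely and that the relevant exchange binomials are genuinely ``coprime'' in the quantum sense.
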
	
Thus we use the notation: $\mathcal{U}(M,\tilde{B})=\mathcal{U}(\mathcal{S})$, where $\mathcal{S}$ is the mutation-equivalence class of $(M,\tilde{B})$, one has:
\begin{equation}
\mathcal{U}(\mathcal{S}) = \bigcap_{(M,\tilde{B})\in \mathcal{S}} \Z\mathbb{P}[\textbf{X}^{\pm 1}].
\end{equation}
Theorem \ref{theoqphen} has the following important corollary, which we refer to as the \emph{quantum Laurent phenomenon}.
\begin{cor}\citep[Corollary 5.2]{qCA}\label{propLaurentP}
The cluster algebra $\mathcal{A}(\mathcal{S})$ is contained in $\mathcal{U}(\mathcal{S})$. Equivalently, $\mathcal{A}(\mathcal{S})$ is contained in the quantum torus $\Z\mathbb{P}[\textbf{X}^{\pm 1}]$ for every quantum seed $(M,\tilde{B})\in \mathcal{S}$ of cluster $\textbf{X}$.
\end{cor}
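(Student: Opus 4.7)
The plan is to derive the corollary directly from Theorem \ref{theoqphen} by an unpacking of definitions. Recall that $\mathcal{A}(\mathcal{S})$ is generated, as a $\Z[t^{\pm 1/2}]$-algebra, by the frozen variables together with their inverses and by all cluster variables from all seeds in $\mathcal{S}$. Since $\mathcal{U}(\mathcal{S})$ is a subalgebra of $\F$, it suffices to check that each such generator lies in $\mathcal{U}(\mathcal{S})$. For the frozen generators this is immediate: they belong to the ground ring $\Z\mathbb{P}$, which sits inside every $\Z\mathbb{P}[\tilde{\textbf{X}}^{\pm 1}]$, hence inside the intersection $\mathcal{U}(\mathcal{S}) = \bigcap_{(M,\tilde{B})\in \mathcal{S}} \Z\mathbb{P}[\textbf{X}^{\pm 1}]$ provided by Theorem \ref{theoqphen}.

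The real content is to handle a cluster variable. Fix a seed $(M',\tilde{B}') \in \mathcal{S}$ with cluster $\textbf{X}' = \{X'_j \mid j\in\textbf{ex}\}$, and let $Y = X'_i$. By Theorem \ref{theoqphen} one has $\mathcal{U}(M',\tilde{B}') = \mathcal{U}(\mathcal{S})$, so it is enough to show $Y \in \mathcal{U}(M',\tilde{B}')$. Clearly $Y \in \Z\mathbb{P}[\textbf{X}'^{\pm 1}]$. I need to verify that $Y \in \Z\mathbb{P}[\textbf{X}'^{\pm 1}_k]$ for every $k\in\textbf{ex}$, where $\textbf{X}'_k$ is the cluster of $\mu_k(M',\tilde{B}')$. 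If $i\neq k$, the mutation formula (\ref{qmut}) leaves $X'_i$ untouched, so $X'_i \in \textbf{X}'_k$ and there is nothing to do. If $i = k$, then by involutivity of quantum seed mutation, the seed $(M',\tilde{B}')$ is obtained from $\mu_k(M',\tilde{B}')$ by mutating in direction $k$; reading (\ref{qmut}) in this reverse direction exhibits $X'_k$ as the sum of two monomials in the toric frame of $\mu_k(M',\tilde{B}')$, that is, as an element of $\Z\mathbb{P}[\textbf{X}'^{\pm 1}_k]$.

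Putting these observations together yields $\mathcal{A}(\mathcal{S}) \subseteq \mathcal{U}(\mathcal{S})$. The equivalent reformulation is then automatic: for any seed $(M,\tilde{B}) \in \mathcal{S}$ of cluster $\textbf{X}$, the identity $\mathcal{U}(\mathcal{S}) = \bigcap_{(M,\tilde{B})\in \mathcal{S}} \Z\mathbb{P}[\textbf{X}^{\pm 1}]$ forces $\mathcal{U}(\mathcal{S}) \subseteq \Z\mathbb{P}[\textbf{X}^{\pm 1}]$, and hence $\mathcal{A}(\mathcal{S}) \subseteq \Z\mathbb{P}[\textbf{X}^{\pm 1}]$. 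No genuine obstacle arises at this stage; the substantive work is entirely carried by Theorem \ref{theoqphen}, and the corollary is the clean combinatorial consequence of the exchange formula (\ref{qmut}) together with involutivity of mutation.
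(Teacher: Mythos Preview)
Your derivation is correct and follows exactly the intended route: the paper does not supply its own proof of this corollary but simply cites it from \citep{qCA}, where it is obtained precisely by observing that every cluster variable of a seed $(M',\tilde{B}')$ lies in $\mathcal{U}(M',\tilde{B}')$ by the exchange formula and involutivity, and then invoking Theorem~\ref{theoqphen} to identify all the $\mathcal{U}(M',\tilde{B}')$ with $\mathcal{U}(\mathcal{S})$. Your argument is the standard one and there is nothing to add.
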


		\subsection{Specializations of quantum cluster algebras}\label{sectspec}

%The question of specialization of quantum cluster algebras into (classical) cluster algebras is not trivial, see \citep{QCAS} for a detailed review and some results on the subject. Nonetheless, we state here some results we will need later on. 

Fix a quantum seed $(M,\tilde{B})$ and $\textbf{X}$ its cluster. The based quantum torus $\Z\mathbb{P}[\textbf{X}^{\pm 1}]$ specializes naturally at $t=1$, via the ring morphism:
\begin{equation}\label{eval1}
\pi : \Z\mathbb{P}[\textbf{X}^{\pm 1}]\to \Z[\tilde{\textbf{X}}^{\pm 1}],
\end{equation}
	such that
\begin{equation*}
	\begin{array}{ll}
	\pi(X_k)  & = X_k,\quad (1\leq k\leq m)\\
	\pi(t^{\pm 1/2})  &= 1.
	\end{array}
\end{equation*}

If we restrict this morphism to the quantum cluster algebra $\mathcal{A}(\mathcal{S})$, it is not clear that we recover the (classical) cluster algebra $\mathcal{A}(\tilde{B})$. This question was tackled in a recent paper by Geiss, Leclerc and Schröer \citep{QCAS}.

\begin{rem}\label{remcorresp}
For a combinatorial point of view, the cluster algebras $\A(\mathcal{S})$ and $\A(\tilde{B})$ are constructed on the same quiver $\tilde{B}$, and the mutations have the same effect on the quiver. Assume the initial seeds are fixed and identified, via the morphism (\ref{eval1}). Then, each quantum cluster variable in $\A(\mathcal{S})$ is identified to a cluster variable in $\A(\tilde{B})$. 
\end{rem}

\begin{prop}\citep[Lemma 3.3]{QCAS}\label{propeval}
The restriction of $ \pi$ to $\mathcal{A}(\mathcal{S})$ is surjective on $\mathcal{A}(\tilde{B})$, and quantum cluster variables are sent to the corresponding cluster variables.
\end{prop}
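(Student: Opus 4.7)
The plan is to induct on the length of a minimal mutation sequence from the initial quantum seed $(M,\tilde{B})$ to a given quantum seed in $\mathcal{S}$. The quantum Laurent phenomenon (Corollary \ref{propLaurentP}) places $\A(\mathcal{S})$ inside the initial based quantum torus $\Z\mathbb{P}[\textbf{X}^{\pm 1}]$, so the map $\pi$ of (\ref{eval1}) genuinely restricts to $\A(\mathcal{S})$. At the initial seed itself, $\pi(X_k)=X_k$ matches the initial classical cluster variables under the identification of quivers from Remark \ref{remcorresp}.

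The key technical observation is that the specialization kills the non-commutativity of any toric frame. Starting from the relation $M(\e)\ast M(\f)=t^{\Lambda_M(\e,\f)/2}M(\e+\f)$ and setting $t=1$, one obtains in the commutative target
\begin{equation*}
\pi(M(\e+\f)) = \pi(M(\e))\,\pi(M(\f)),
\end{equation*}
and iterating yields $\pi(M(c)) = \prod_{i=1}^m \pi(M(\e_i))^{c_i}$ for all $c\in\Z^m$; that is, $\pi\circ M$ is the ordinary monomial evaluation at the images of the cluster variables $M(\e_i)$.

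For the inductive step, assume $(M',\tilde{B}')$ is a quantum seed whose cluster variables $X_i'=M'(\e_i)$ specialize to the classical cluster variables $x_i'$ of the corresponding seed in $\A(\tilde{B})$, and mutate in an exchangeable direction $k$. The mutation formula (\ref{qmut}) writes the new cluster variable $X_k''$ as a sum of two toric frame values, so the multiplicativity of $\pi\circ M'$ just established yields
\begin{equation*}
\pi(X_k'') = (x_k')^{-1}\Bigl(\prod_{b_{ik}'>0}(x_i')^{b_{ik}'} + \prod_{b_{ik}'<0}(x_i')^{-b_{ik}'}\Bigr),
\end{equation*}
which is exactly the classical exchange relation producing the mutated cluster variable $x_k''$. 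The remaining cluster variables are unchanged under the mutation, so the inductive hypothesis is preserved. Surjectivity onto $\A(\tilde{B})$ follows at once, since every generator of the latter is a cluster variable in some seed reachable by finitely many mutations from the initial one. The main obstacle is thus not the induction itself but the prior step of invoking the quantum Laurent phenomenon; once it is in hand, the two-term shape of (\ref{qmut}) sidesteps any delicate analysis of the $t$-binomial coefficients appearing in the general definition of $M'$, and the proof reduces to bookkeeping of monomial specializations.
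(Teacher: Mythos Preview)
The paper does not supply its own proof here; the proposition is quoted from \citep[Lemma 3.3]{QCAS} without argument, so there is nothing in-text to compare against.

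Your inductive strategy is correct in spirit, but there is a technical slip. You assert multiplicativity of $\pi\circ M'$ for an arbitrary mutated toric frame $M'$, appealing to the identity $M'(\e)\ast M'(\f)=t^{\Lambda_{M'}(\e,\f)/2}M'(\e+\f)$ and ``setting $t=1$''. The difficulty is that $\pi$ in (\ref{eval1}) is defined only on the \emph{initial} based quantum torus $\Z\mathbb{P}[\textbf{X}^{\pm 1}]$, and for a non-initial frame the elements $M'(c)$ with a negative entry need not lie there: each of the two summands $M'(-\e_k+\cdots)$ in (\ref{qmut}) involves $(X_k')^{-1}$, the inverse of a genuine Laurent polynomial in the initial variables, which is generally not itself a Laurent polynomial. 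So you cannot apply $\pi$ to the two summands of (\ref{qmut}) separately as you do.

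The repair is immediate and keeps your argument intact: multiply (\ref{qmut}) on the left by $X_k'=M'(\e_k)$ to obtain
\[
X_k'\ast X_k'' \;=\; t^{a}\,M'\Bigl(\textstyle\sum_{b_{ik}'>0}b_{ik}'\e_i\Bigr) \;+\; t^{b}\,M'\Bigl(-\textstyle\sum_{b_{ik}'<0}b_{ik}'\e_i\Bigr),
\]
an identity in which every term is, up to a power of $t^{1/2}$, a product of cluster variables with non-negative exponents, hence lies in $\A(\mathcal{S})\subset\Z\mathbb{P}[\textbf{X}^{\pm 1}]$. Now $\pi$ applies legitimately; the inductive hypothesis $\pi(X_i')=x_i'$ gives $x_k'\cdot\pi(X_k'')=\prod_{b_{ik}'>0}(x_i')^{b_{ik}'}+\prod_{b_{ik}'<0}(x_i')^{-b_{ik}'}$, and dividing by $x_k'$ in the commutative target completes the inductive step. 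With this adjustment your proof goes through.
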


They also conjectured that the specialization at $t=1$ of the quantum cluster algebra is isomorphic to the classical cluster algebra, and gave a proof under some assumptions on the initial seed. 

Nevertheless, by applying Proposition \ref{propeval} to different seeds (while keeping the identification (\ref{eval1}) of the initial seeds), one gets:
\begin{cor}\label{coreval}
The evaluation morphism $\pi$ sends all quantum cluster monomials to the corresponding cluster monomials.
\end{cor}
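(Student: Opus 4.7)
The plan is to reduce the statement to Proposition \ref{propeval} applied seed-by-seed. A quantum cluster monomial in $\mathcal{A}(\mathcal{S})$ is, by definition, a finite $\ast$-product of (non-negative powers of) quantum cluster variables together with (arbitrary powers of) frozen variables, all taken from within a single quantum seed $(M',\tilde{B}')\in\mathcal{S}$. Correspondingly, a classical cluster monomial is an ordinary monomial in the cluster and frozen variables of a single classical seed. Since, by Remark \ref{remcorresp}, the quantum and classical cluster structures share the same underlying combinatorics of mutations, there is a canonical bijection between quantum seeds in $\mathcal{S}$ and classical seeds of $\mathcal{A}(\tilde{B})$ (once the initial seeds are identified via (\ref{eval1})), and hence a canonical bijection between quantum and classical cluster monomials.

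Given any quantum cluster monomial, write it in the form
\begin{equation*}
Y = M'\!\left(\textstyle\sum_{i=1}^m a_i\,\e_i\right)
\end{equation*}
for some $(a_i)$ with $a_i\geq 0$ whenever $i\in\textbf{ex}$. By the $t$-commutation relations attached to $M'$, we can rewrite
\begin{equation*}
Y = t^{N/2}\, (X_1')^{a_1}\ast (X_2')^{a_2}\ast\cdots\ast (X_m')^{a_m}
\end{equation*}
for a suitable integer $N$ depending only on the ordering. Applying the ring morphism $\pi$ and using $\pi(t^{1/2})=1$ gives $\pi(Y)=\prod_{i=1}^m \pi(X_i')^{a_i}$. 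Proposition \ref{propeval} then identifies each $\pi(X_i')$ with the classical cluster (or frozen) variable in the corresponding classical seed, and the right-hand side is precisely the classical cluster monomial matched to $Y$ under the bijection above.

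The only real subtlety, and the step that could be called the main obstacle, is the legitimacy of applying Proposition \ref{propeval} at a non-initial seed. This requires verifying that the identification of quantum and classical cluster variables produced by Proposition \ref{propeval} at the seed $(M',\tilde B')$, viewed as the initial seed of its own mutation class, agrees with the identification propagated from the original initial seed by iterated mutation. This compatibility follows from the fact that the quantum exchange relation (\ref{qmut}), together with the definition of $M'$ via $t$-binomial coefficients, specializes at $t=1$ to the classical exchange relation: the $t$-binomials $\binom{r}{p}_{t^{d_k/2}}$ become ordinary binomials, and the mutated toric frame becomes the classical mutated cluster. Consequently the diagram
\begin{equation*}
\begin{array}{ccc}
(M,\tilde B) & \xrightarrow{\ \mu_k\ } & (M',\tilde B') \\
\downarrow\pi & & \downarrow\pi \\
\text{classical seed} & \xrightarrow{\ \mu_k\ } & \text{classical seed}
\end{array}
\end{equation*}
commutes, so the identifications are consistent across the mutation-equivalence class, and the argument of the previous paragraph applies in every seed. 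This yields the corollary.
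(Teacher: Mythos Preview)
Your proof is correct and follows essentially the same approach as the paper, which derives the corollary in a single sentence: ``by applying Proposition~\ref{propeval} to different seeds (while keeping the identification (\ref{eval1}) of the initial seeds).'' You have simply unpacked this: write the quantum cluster monomial as a power of $t^{1/2}$ times an ordered $\ast$-product of quantum cluster variables from one seed, use that $\pi$ is a ring morphism killing $t^{1/2}$, and invoke Proposition~\ref{propeval} for each factor.

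One remark: your final paragraph, verifying that the specialization of the quantum exchange relation at $t=1$ recovers the classical exchange relation, is essentially re-proving the content of Proposition~\ref{propeval} itself (this is exactly the argument in \cite{QCAS}). Since Proposition~\ref{propeval}, as stated, already asserts that $\pi$ sends \emph{every} quantum cluster variable (in any seed of the mutation class) to the corresponding classical cluster variable, you may invoke it directly for each $X_i'$ without re-deriving the compatibility of mutations with specialization. The ``subtlety'' you identify is real, but it is precisely what Proposition~\ref{propeval} packages; the paper's parenthetical ``while keeping the identification (\ref{eval1}) of the initial seeds'' is its acknowledgment of the same point.
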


%%%%%%%%%%%%%%%%%%%%%%%%%%%%%%%%%%%%%%%%%%%%%%%%%%%%%%%%%%%%%%%%%%%%%%%%% a préciser plus tard ? %%%%%%%%%%%%%%%%%%%%%%%%%%%%%%%%%%%%%%%%%%%%%%%%%%%%%%%%%%%%%%%%%%%%%%%%%%%%%%%%%%%%%%%%
	
		\subsection{Positivity}\label{sectpos}

Let us state a last general result on quantum cluster algebras: Davison's positivity theorem \citep{PQCA}.

We have recalled in Section \ref{sectLP} that each (quantum) cluster variable can be written as a Laurent polynomial in the initial (quantum) cluster variables (and $t^{1/2}$). For classical cluster algebras, Fomin-Zelevinski conjectured that these Laurent polynomials have positive coefficients. The so-called \emph{positivity conjecture} was proven by Lee-Schiffler in \citep{PCA}. 

For quantum cluster algebras, the result is the following.

\begin{theo}\citep[Theorem 2.4]{PQCA}\label{theopos}
Let $\mathcal{A}$ be a quantum cluster algebra defined by a compatible pair $(\Lambda,\tilde{B})$. For a mutated toric frame $M'$ and a quantum cluster monomial $Y$, let us write:
\begin{equation}
Y= \sum_{\e \in\Z^m}a_{\e}(t^{1/2})M'(\e),
\end{equation}
with $a_{\e}(t^{1/2}) \in \Z[t^{\pm 1/2}]$. Then the coefficients $a_{\e}(t^{1/2})$ have positive coefficients. 

Moreover, they can be written with in the form
\begin{equation}
a_{\e}(t^{1/2})= t^{-\deg(b_\e(t))/2}b_{\e}(t),
\end{equation}
where $b_{\e}(t) \in \N[q]$, i.e. each polynomial $a_{\e}(t^{1/2})$ contains only even or odd powers of $t^{1/2}$.
\end{theo}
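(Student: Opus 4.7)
The plan is to follow Davison's cohomological Donaldson--Thomas approach, since a direct induction on the number of mutations separating the initial seed from $(M',\tilde{B}')$ fails to produce manifest positivity: the quantum exchange relation, written out via $t$-binomials, generates signs whose cancellation one cannot control term by term. The first preparatory step is to reduce to the skew-symmetric case, so that the exchange matrix $\tilde{B}$ is encoded by a quiver $Q$, and further, via the standard principal-coefficients trick, to the framed matrix $\binom{B}{I_n}$; the general statement is then recovered by specialising frozen variables.

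The core idea is to choose a generic non-degenerate potential $W$ on $Q$ and to work inside the critical cohomological Hall algebra $\mathcal{H}(Q,W)$ of Kontsevich--Soibelman. Its grading by cohomological degree supplies the formal parameter $t^{1/2}$, and the cohomological integrality theorem provides a PBW-type decomposition of $\mathcal{H}(Q,W)$ into BPS cohomology pieces whose generating series, under Keller's cluster-category dictionary, match the quantum cluster characters. With this identification in hand, any quantum cluster monomial $Y$ expressed in the basis $\{M'(\e) \mid \e\in\Z^m\}$ acquires coefficients $a_\e(t^{1/2})$ that are precisely the graded Poincaré polynomials of certain BPS cohomology groups indexed by $\e$, so positivity is automatic from the fact that Betti numbers are non-negative.

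The main obstacle, and the conceptual heart of the argument, is to establish \emph{purity} of the underlying BPS sheaf as a mixed Hodge module on the moduli stack of representations of the Jacobi algebra of $(Q,W)$. Once purity holds, the weight filtration forces all contributions to a fixed Poincaré polynomial to lie in cohomological degrees of a single parity; this yields the refined statement $a_\e(t^{1/2})=t^{-\deg b_\e/2}b_\e(t)$ with $b_\e(t)\in\N[t]$. Purity itself rests on deep properties of quivers with potential, notably the Davison--Meinhardt integrality theorem and the self-duality features of BPS sheaves for symmetric quivers, and is where essentially all of the difficulty of the proof is concentrated; everything before it is, by contrast, formal bookkeeping once the cluster-category dictionary is in place.
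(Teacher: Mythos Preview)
The paper does not give its own proof of this statement: Theorem~\ref{theopos} is quoted from Davison \citep[Theorem~2.4]{PQCA} and used as a black box in Section~\ref{sectpos}. There is therefore nothing in the paper to compare your proposal against.

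For what it is worth, your sketch is a reasonable high-level summary of Davison's actual argument in \citep{PQCA}, which does go through critical cohomological Hall algebras of a quiver with potential, BPS sheaves on the moduli stack of representations of the Jacobi algebra, and a purity theorem for those sheaves (resting on the Davison--Meinhardt integrality machinery). Two small caveats. First, you phrase the opening as ``reduce to the skew-symmetric case'', but Davison's theorem is stated and proved only for skew-symmetric $\tilde{B}$ (quivers); there is no reduction from the general skew-symmetrizable case, and positivity there is, to my knowledge, still open. In the present paper this is irrelevant, since $\g$ is simply laced and every exchange matrix that occurs is skew-symmetric. Second, the passage ``under Keller's cluster-category dictionary, match the quantum cluster characters'' is doing a lot of work and is not quite the route Davison takes; the link between BPS invariants and quantum cluster monomials in \citep{PQCA} goes through Nagao's wall-crossing identification rather than directly through Keller's categorification. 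These are points of attribution and emphasis rather than genuine gaps in the overall strategy you outline.
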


	\section{Cartan data and quantum Cartan data}\label{sectCartan}

We fix here some notations for the rest of the paper.

		\subsection{Root data}\label{sectdata}	
		
Let the $\g$ be a simple Lie algebra of type $A$,$D$ or $E$ of rank $n$, and let $I:=\{ 1, \ldots, n \}$. 

The \emph{Cartan matrix} of $\g$ is the $n\times n$ matrix $C$ such that 
\begin{equation*}
C_{i,j}=\left\lbrace \begin{array}{rl}
				2 & \text{ if }  i=j,\\
				-1 & \text{ if }  i \sim j \quad\text{( } i \text{ and } j \text{ are adjacent vertices of } \gamma  \text{ ) },\\
				0 & \text{ otherwise.}
\end{array}\right.
\end{equation*}

Let us denote by $(\alpha_{i})_{i\in I}$ the simple roots of $\mathfrak{g}$, $(\alpha_{i}^{\vee})_{i\in I}$ the simple coroots and $(\omega_{i})_{i\in I}$ the fundamental weights. We will use the usual lattices $Q= \bigoplus_{i\in I}\mathbb{Z}\alpha_{i}$, $Q^{+}= \bigoplus_{i\in I}\mathbb{N}\alpha_{i}$ and $P= \bigoplus_{i\in I}\mathbb{Z}\omega_{i}$. Let $P_{\mathbb{Q}}=P\otimes \mathbb{Q}$, endowed with the partial ordering : $\omega \leq \omega'$ if and only if $\omega' - \omega \in Q^{+}$. 

The Dynkin diagram of $\g$ is numbered as in \citep{IDLA}, and let $a_1,a_2,\ldots,a_n$ be the Kac labels ($a_0=1$).

		\subsection{Quantum Cartan matrix}\label{sectQCM}
		
Let $z$ be an indeterminate.

\begin{defi}
The \textit{quantum Cartan matrix} of $\g$ is the matrix $C(z)$ with entries,
\begin{equation*}
C_{ij}(z)=\left\lbrace \begin{array}{cl}
				z+z^{-1} & \text{ if } i=j,\\
				-1 & \text{ if } i \sim j,\\
				0 & \text{ otherwise.}
\end{array}\right.
\end{equation*}
\end{defi}
\begin{rem}
The evaluation $C(1)$ is the Cartan matrix of $\mathfrak{g}$. As $\det(C)\neq 0$, then $\det(C(z))\neq 0$ and we can define $\tilde{C}(z)$, the inverse of the matrix $C(z)$. The entries of the matrix $\tilde{C}(z)$ belong to $\mathbb{Q}(z)$. 
\end{rem}

One can write 
\begin{equation*}
C(z) = (z+z^{-1})\id - A,
\end{equation*}
where $A$ is the adjacency matrix of $\gamma$. Hence,
\begin{equation*}
\tilde{C}(z) = \sum_{m=0}^{+\infty}(z+z^{-1})^{-m-1}A^{m}.
\end{equation*}
Therefore, we can write the entries of $\tilde{C}(z)$ as power series in $z$. For all $i,j\in I$,
\begin{equation}\label{qCm}
\tilde{C}_{i j}(z) = \sum_{m=1}^{+\infty}\tilde{C}_{i,j}(m)z^{m}\quad \in \mathbb{Z}[[z]].
\end{equation}
\begin{ex}\begin{enumerate}[(i)]\label{exsl2-1}
	\item For $\g=\mathfrak{sl}_{2}$, one has 
\begin{equation}
\tilde{C}_{1 1} = \sum_{n=0}^{+\infty}(-1)^{n}z^{2n+1} = z-z^{3}+z^{5}-z^{7}+z^{9}-z^{11} + \cdots \\
\end{equation}
		\item For $\g=\mathfrak{sl}_{3}$, one has 
\begin{align*}
\tilde{C}_{ii} &= z-z^{5}+z^{7}-z^{11}+z^{13} + \cdots, \quad 1 \leq i \leq 2\\
\tilde{C}_{ij} &= z^{2}-z^{4}+z^{8}-z^{10}+z^{14} + \cdots, \quad 1 \leq i \neq j \leq 2.
\end{align*}
\end{enumerate}
\end{ex}

We will need the following lemma:
\begin{lem}\label{lemCtildeC}
For all $(i,j)\in I^2$, 
\begin{align*}
\tilde{C}_{ij}(m-1) + \tilde{C}_{ij}(m+1) - \sum_{k \sim j}\tilde{C}_{ik}(m) & = 0, \quad \forall m \geq 1, \\
\tilde{C}_{ij}(1) & = \delta_{i,j}.
\end{align*}
\end{lem}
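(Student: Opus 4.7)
The plan is to derive both identities by comparing coefficients in the defining relation $\tilde{C}(z)\, C(z) = \mathrm{Id}_n$, using the explicit form of the quantum Cartan matrix.

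First, I would write $C(z) = (z+z^{-1})\mathrm{Id} - A$, where $A$ is the adjacency matrix of the Dynkin diagram (so $A_{kj}=1$ iff $k\sim j$, and $0$ otherwise). Reading off the $(i,j)$ entry of $\tilde{C}(z)\,C(z) = \mathrm{Id}_n$ gives the functional equation
\begin{equation*}
(z+z^{-1})\,\tilde{C}_{ij}(z) \;-\; \sum_{k\sim j}\tilde{C}_{ik}(z) \;=\; \delta_{ij}.
\end{equation*}
Both sides are Laurent series in $z$ (in fact power series, as will become apparent), so the plan is to substitute the power-series expansion (\ref{qCm}) and extract coefficients.

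Next, substituting $\tilde{C}_{ij}(z) = \sum_{m\geq 1}\tilde{C}_{ij}(m)\,z^{m}$ and using $(z+z^{-1})z^{m}= z^{m-1}+z^{m+1}$, the left-hand side becomes
\begin{equation*}
\sum_{m\geq 1}\tilde{C}_{ij}(m)\,z^{m+1} \;+\; \sum_{m\geq 1}\tilde{C}_{ij}(m)\,z^{m-1} \;-\; \sum_{m\geq 1}\Bigl(\sum_{k\sim j}\tilde{C}_{ik}(m)\Bigr) z^{m}.
\end{equation*}
Since $\tilde{C}_{ij}(z)$ has no constant term, I set $\tilde{C}_{ij}(0):=0$, so that the second sum can be reindexed to run over $m\geq 0$ with term $\tilde{C}_{ij}(m+1)z^{m}$, and the first sum reindexed to run over $m\geq 2$ with term $\tilde{C}_{ij}(m-1)z^{m}$.

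Comparing the coefficient of $z^{0}$ then yields $\tilde{C}_{ij}(1)=\delta_{ij}$, the second identity of the lemma. Comparing the coefficient of $z^{m}$ for each $m\geq 1$ yields
\begin{equation*}
\tilde{C}_{ij}(m-1)+\tilde{C}_{ij}(m+1) - \sum_{k\sim j}\tilde{C}_{ik}(m) = 0,
\end{equation*}
which is the first identity (the case $m=1$ uses the convention $\tilde{C}_{ij}(0)=0$). There is no real obstacle here; the only point to watch is handling the $z^{-1}\cdot z$ contribution cleanly, which is taken care of by the convention that the series starts at $z^{1}$ so that $\tilde{C}_{ij}(0)=0$.
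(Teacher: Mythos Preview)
Your proof is correct and follows essentially the same approach as the paper: both arguments expand the $(i,j)$ entry of the identity $\tilde{C}(z)\,C(z)=\mathrm{Id}$ as a power series in $z$ and compare coefficients, using the convention $\tilde{C}_{ij}(0)=0$.
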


\begin{proof}
By definition of $\tilde{C}$, one has
\begin{equation}
\tilde{C}(z)\cdot C(z) = \id \quad \in \mathcal{M}_n(\Q(z)).
\end{equation}
By writing $\tilde{C}(z)$ as a formal power series, and using the definition of $C(z)$, we obtain, for all $(i,j)\in I^2$, 
\begin{equation}
\sum_{m=0}^{+\infty}\left( \tilde{C}_{ij}(m)(z^{m+1}+z^{m-1}) - \sum_{k\sim j}\tilde{C}_{ik}(m)z^m \right) = \delta_{i,j} \quad \in \mathbb{C}[[z].
\end{equation}
Which is equivalent to
\begin{align*}
\tilde{C}_{ij}(m-1) + \tilde{C}_{ij}(m+1) - \sum_{k \sim j}\tilde{C}_{ik}(m) & = 0, \quad \forall m \geq 1, \\
\tilde{C}_{ij}(1) - \sum_{k\sim j}\tilde{C}_{ij}(0) & = \delta_{i,j}, \\
\tilde{C}_{ij}(0) & = 0.
\end{align*}
\end{proof}

		\subsection{Infinite quiver}\label{sectinftyquiver}

Next, let us define an infinite quiver $\Gamma$ as in \citep{ACAA}. Let $\tilde{\Gamma}$ be the quiver with vertex set $I\times \mathbb{Z}$ and arrows
\begin{equation}
\left( (i,r) \rightarrow (j,s) \right) \Longleftrightarrow \left( C_{i,j} \neq 0 \text{ and } s= r + C_{i,j} \right).
\end{equation}

This quiver has two isomorphic connected components (see \citep{ACAA}). Let $\Gamma$ be one of them, and the $\hat{I}$ be its set of vertices.
 
\begin{ex}\label{exsl4}
For $\g=\mathfrak{sl}_{4}$, fix $\hat{I}$ to be 
\begin{equation*}
\hat{I} := \left\lbrace (1,2p)\mid p\in\mathbb{Z} \right\rbrace \cup \left\lbrace (2,2p+1)\mid p\in\mathbb{Z} \right\rbrace \cup\left\lbrace (3,2p)\mid p\in\mathbb{Z} \right\rbrace,
\end{equation*}
and $\Gamma$ is the following:
\[
\xymatrix@R=0.5em@C=1.5em{
	& \vdots & \\
\vdots & (2,1) \ar[dl]\ar[dr]\ar[u] & \vdots \\
(1,0)\ar[dr]\ar[u] & & (3,0) \ar[dl]\ar[u] \\
& (2,-1) \ar[dl]\ar[dr]\ar[uu] & \\
(1,-2)\ar[dr]\ar[uu] & & (3,-2)\ar[dl]\ar[uu] \\
& (2,-3) \ar[dl]\ar[dr]\ar[uu] & \\
(1,-4)\ar[uu] &  \vdots \ar[u] & (3,-4)\ar[uu] \\
\vdots \ar[u] &   	& \vdots \ar[u]
}
\]
\end{ex}

	\section{Category $\cO$ of representations of quantum loop algebras}\label{sectO}

We now start with the more representation theoric notions of this paper. We first recall the definitions of the quantum loop algebra and its Borel subalgebra, before introducing the Hernandez-Jimbo category $\cO$ of representations, as well as some known results on the subject. We will sporadically use concepts and notations from the two previous sections. 

			\subsection{Quantum loop algebra and Borel subalgebra}

%Now consider the affine Lie algebra $\hat{\mathfrak{g}}$ obtained from $\mathfrak{g}$.

Fix a nonzero complex number $q$, which is not a root of unity, and $h\in\mathbb{C}$ such that $q=e^{h}$. Then for all $r\in \mathbb{Q}, q^{r}:=e^{rh}$ is well-defined. Since $q$ is not a root of unity, for $r,s\in \mathbb{Q}$, we have $q^{r}=q^{s}$ if and only if $r=s$. 

We will use the following standard notations.
\begin{equation*}
\begin{array}{ccc}
[m]_{z}=\frac{z^{m}-z^{-m}}{z-z^{-1}},& [m]_{z}! = \prod_{j = 1}^{m}[j]_{z}, & \genfrac[]{0pt}{0}{r}{s}_{z} = \frac{[r]_{z}!}{[s]_{z}![r-s]_{z}!}
\end{array}
\end{equation*}

\begin{defi}
One defines the \textit{quantum loop algebra} $\Uqg$ as the $\mathbb{C}$-algebra generated by $e_{i},f_{i},k_{i}^{\pm 1}, 0\leq i \leq n$, together with the following relations, for $0\leq i,j\leq n$ :
\begin{equation}
\begin{gathered}
k_{i}k_{j}=k_{j}k_{i}, \quad k_i k_i^{-1}=k_i^{-1}k_i=1,\quad k_0^{a_0}k_1^{a_1}\cdots k_n^{a_n}=1,\\
\left[ e_{i},f_{j} \right]= \delta_{i,j}\frac{k_{i}-k_{i}^{-1}}{q-q^{-1}},\\
k_{i}e_{j}k_{i}^{-1}=q^{C_{ij}}e_{j}, \quad k_{i}f_{j}k_{i}^{-1}=q^{-C_{ij}}e_{j}, \\
\sum_{r=0}^{1-C_{ij}}(-1)^{r} e_{i}^{(1-C_{ij}-r)}e_{j}e_{i}^{(r)}  =0,~ (i\neq j), \\
\sum_{r=0}^{1-C_{ij}}(-1)^{r}f_{i}^{(1-C_{ij}-r)}f_{j}f_{i}^{(r)} =0,~ (i\neq j),
\end{gathered}
\end{equation}
where $x_{i}^{(r)} = x_{i}^{r}/[r]_{q}!, (x_{i}=e_{i},f_{i})$.
\end{defi}

%The algebra $\Uqg$ has another presentation, with the \textit{Drinfeld generators} (\citep{ANRY}, \citep{BGA})
%\begin{equation*}
%x_{i,r}^{\pm}(i\in I, r\in \mathbb{Z}), \quad \phi_{i,\pm m}^{\pm} (i\in I, m\geq 0),\quad k_{i}^{\pm 1} (i\in I),
%\end{equation*}
%and some relations we will not recall here, but which are also $q$-deformations of the Weyl and Serre relations.

%Using this presentation, we have a triangular decomposition of $\Uqg$ \citep{BGA}:
%\begin{equation}
%\Uqg \simeq \Uqg^-\otimes \Uqg^0 \otimes \Uqg^+,
%\end{equation}
%where $\Uqg^\pm$ (resp. $\Uqg^0$) is the subalgebra of $\Uqg$ generated by the $(x_{i,r}^\pm)_{i\in I, r\in \mathbb{Z}}$ (resp. $(\phi_{i,\pm m}^\pm)_{i\in I, m\in \mathbb{N}}$).

%The algebra $\Uqg$ has an Hopf algebra structure, where the coproduct and the antipode are given by, for $i\in \{0,\ldots,n\}$,
%\begin{equation}\label{HopfUqg}
%\begin{gathered}
%\Delta(e_{i}) =e_{i}\otimes 1 + k_{i}\otimes e_{i},\\ \Delta(f_{i}) =f_{i}\otimes k_{i}^{-1} + 1\otimes f_{i}, \\ \Delta(k_{i}) = k_{i}\otimes k_{i},\quad S(k_{i}) = k_{i}^{-1} \\
%S(e_{i}) = -k_{i}^{-1}e_{i},\quad S(f_{i}) = -f_{i}k_{i}. 
%\end{gathered}
%\end{equation}

\begin{defi}
The \textit{Borel algebra} $\Uqb$ is the subalgebra of $\Uqg$ generated by the $e_{i}, k_{i}^{\pm 1}$, for $0\leq i\leq n$.
\end{defi}

Both the quantum loop algebra and its Borel subalgebra are Hopf algebras.

From now on, except when explicitly stated otherwise, we are going to consider representations of the Borel algebra $\Uqb$. Particularly, we consider the action of the $\ell$-Cartan subalgebra $\Uqb^0$: a commutative subalgebra of $\Uqb$ generated by the so-called Drinfeld generators:
\begin{equation*}
\Uqb^0 := \left\langle k_{i}^{\pm 1}, \phi_{i,r}^+ \right\rangle_{i\in I, r> 0}.
\end{equation*}

		\subsection{Highest $\ell$-weight modules}

Let $V$ be a $\Uqb$-module and $\omega\in P_{\mathbb{Q}}$ a weight. One defines the \textit{weight space} of $V$ of weight $\omega$ by 
\begin{equation*}
V_{\omega} := \lbrace v\in V \mid k_{i}v=q^{\omega(\alpha_{i}^{\vee})}v, 1\leq i \leq n \rbrace.
\end{equation*}
The vector space $V$ is said to be \textit{Cartan diagonalizable} if $V=\bigoplus_{\omega\in P_{\mathbb{Q}}}V_{\omega}$.
\begin{defi}
A series $\pmb\Psi = (\psi_{i,m})_{i\in I,m\geq 0}$ of complex numbers, such that $\psi_{i,m}\in q^{\mathbb{Q}}$ for all $i\in I$ is called an $\ell$\textit{-weight}. The set of $\ell$-weights is denoted by $P_{\ell}$. One identifies the $\ell$-weight $\pmb\Psi$ to its generating series :
\begin{equation*}
\begin{array}{cc}
\pmb\Psi = (\psi_{i}(z))_{i\in I}, & \psi_{i}(z)=\sum_{m\geq 0}\psi_{i,m}z^{m}.
\end{array}
\end{equation*}
\end{defi}
Let us define some particular $\ell$-weights which are important in our context.

For $\omega \in P_\Q$, let $[\omega]$ be defined as
\begin{equation}
([\omega])_i(z) = q^{\omega(\alpha_i^{\vee})}, 1\leq i \leq n.
\end{equation}

For $i\in I$ and $a\in \mathbb{C}^\times$, let
\begin{itemize}
	\item[•] $\pmb\Psi_{i,a}$ be defined as
\begin{equation}\label{Psi}
\left(\pmb\Psi_{i,a}\right)_j(z) = \left\lbrace\begin{array}{ll}
										1-az & \text{ if } j=i \\
										1 & \text{ if } j\neq i
							\end{array}\right. .
\end{equation}
	\item[•] $Y_{i,a}$ be defined as
\begin{equation}\label{Y}
\left(Y_{i,a}\right)_j(z) = \left\lbrace\begin{array}{ll}
										q\frac{1-aq^{-1}z}{1-aqz} & \text{ if } j=i \\
										1 & \text{ if } j\neq i
							\end{array}\right. .
\end{equation}
\end{itemize}
The sets $P_{\mathbb{Q}}$ and $P_{\ell}$ have group structures (the elements of $P_{\ell}$ are invertible formal series) and one has a surjective group morphism $\varpi : P_{\ell} \to P_{\mathbb{Q}}$ which satisfies $\psi_{i}(0)=q^{\varpi(\pmb\Psi)(\alpha_{i}^{\vee})}$, for all $\pmb\Psi \in P_\ell$ and all $i\in I$.

Let $V$ be  $\Uqb$-module and $\pmb\Psi\in P_{\ell}$ an $\ell$-weight. One defines the \textit{$\ell$-weight space} of $V$ of $\ell$-weight $\pmb\Psi$ by
\begin{equation*}
V_{\pmb\Psi} := \lbrace v \in V \mid \exists p\geq 0,\forall i \in I, \forall m\geq 0, (\phi_{i,m}^{+} - \psi_{i,m})^{p}v = 0 \rbrace.
\end{equation*} 
\begin{rem}
With the usual convention $\phi_{i,0}^{+}=k_{i}$, one has $V_{\pmb\Psi} \subset V_{\varpi(\pmb\Psi)}$.
\end{rem}

\begin{defi}
Let $V$ be a $\Uqb$-module.  It is said to be \textit{of highest} $\ell$\textit{-weight} $\pmb\Psi\in P_{\ell}$ if there is $v\in V$ such that $V=\Uqb v$, 
\begin{equation*}
e_{i}v= 0, \forall i \in I \quad \text{ and } \quad \phi_{i,m}^{+}v=\psi_{i,m}v,~\forall i\in I,m \geq 0.
\end{equation*}
In that case, the $\ell$-weight $\pmb\Psi$ is entirely determined by $V$, it is called the $\ell$-weight of $V$, and $v$ is a highest $\ell$-weight vector of $V$.
\end{defi}

\begin{prop}\citep{ARDRF}
For all $\pmb\Psi\in P_{\ell}$ there is, up to isomorphism, a unique simple highest $\ell$-weight module of $\ell$-weight $\pmb\Psi$, denoted by $L(\pmb\Psi)$.
\end{prop}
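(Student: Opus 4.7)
The plan is to follow the classical highest-weight argument, adapted to the $\ell$-weight setting for the Borel subalgebra $\Uqb$.

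\textbf{Uniqueness.} Suppose $V$ and $V'$ are two simple highest $\ell$-weight $\Uqb$-modules of $\ell$-weight $\pmb\Psi$, with highest $\ell$-weight vectors $v$ and $v'$. I would form the submodule $W \subset V \oplus V'$ generated by $(v, v')$. By the highest $\ell$-weight condition, the weight space $W_{\varpi(\pmb\Psi)}$ is one-dimensional, spanned by $(v,v')$, and every other weight of $W$ is strictly smaller than $\varpi(\pmb\Psi)$ in the partial order on $P_\Q$. Any proper submodule of $W$ therefore has trivial intersection with $W_{\varpi(\pmb\Psi)}$ (otherwise it would contain the generator and hence be all of $W$); since the sum of weight-graded submodules is weight-graded, the sum $N$ of all proper submodules of $W$ is still proper and is the unique maximal proper submodule. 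The two projections $W \to V$ and $W \to V'$ are surjective $\Uqb$-morphisms whose kernels are proper submodules of $W$, hence both equal $N$, so $V \cong W/N \cong V'$.

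\textbf{Existence.} I would construct an $\ell$-weight analogue of a Verma module. The Borel algebra admits a triangular decomposition $\Uqb \simeq \Uqbm \otimes \Uqbh \otimes \mathcal{U}_q^+(\mathfrak{b})$ of $\mathbb{C}$-vector spaces, where $\mathcal{U}_q^+(\mathfrak{b})$ is generated by the $e_i$'s, $\Uqbh$ is the $\ell$-Cartan subalgebra, and $\Uqbm$ is generated by suitable negative Drinfeld root vectors. Declare a one-dimensional $\Uqbp$-module $\mathbb{C}_{\pmb\Psi} = \mathbb{C} v_{\pmb\Psi}$ by $e_i \cdot v_{\pmb\Psi} = 0$ and $\phi_{i,m}^+ \cdot v_{\pmb\Psi} = \psi_{i,m} v_{\pmb\Psi}$. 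The induced module
\[
M(\pmb\Psi) := \Uqb \otimes_{\Uqbp} \mathbb{C}_{\pmb\Psi}
\]
is then a highest $\ell$-weight module of $\ell$-weight $\pmb\Psi$, and the triangular decomposition guarantees that it is nonzero, with a PBW-type basis indexed by monomials in $\Uqbm$. Its unique maximal proper submodule (existing by the argument of the previous paragraph, now applied to $M(\pmb\Psi)$) yields the desired simple quotient $L(\pmb\Psi)$.

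\textbf{Main obstacle.} The delicate point is that $\Uqb$ is defined via its Drinfeld-Jimbo presentation, in which the Drinfeld-type generators $\phi_{i,m}^+$ are not among the defining generators. One must therefore verify both the triangular decomposition of $\Uqb$ and the well-definedness of $\mathbb{C}_{\pmb\Psi}$ as a $\Uqbp$-module: this requires knowing that the $\phi_{i,m}^+$ generate a commutative subalgebra of $\Uqb$ on which any assignment $\phi_{i,m}^+ \mapsto \psi_{i,m}$ extends to a character, and that this character is compatible with the trivial action of the $e_i$'s (so that the induction from $\Uqbp$ makes sense). These structural facts rest on the work of Beck and Damiani on Drinfeld-type generators for quantum affine algebras, and are the content on which Hernandez--Jimbo rely in \citep{ARDRF}.
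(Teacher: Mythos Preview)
The paper does not give its own proof of this proposition: it is stated as a citation from \citep{ARDRF} and no argument is supplied. So there is nothing in the paper to compare your proposal against.

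That said, your outline is the standard highest-weight argument and is essentially what Hernandez--Jimbo carry out in \citep{ARDRF}. One small point of phrasing in your uniqueness step: you say the kernels of the two projections $W\to V$ and $W\to V'$ are proper and ``hence both equal $N$''. The reason they equal $N$ (rather than merely being contained in it) is that $V$ and $V'$ are simple, so each kernel is a \emph{maximal} proper submodule of $W$, and $N$ is the unique such. You have all the ingredients for this, but it is worth stating the maximality explicitly. You are also right to flag the structural facts (triangular decomposition of $\Uqb$, commutativity of the $\phi_{i,m}^+$, compatibility of the character with the annihilation by the $e_i$) as the genuine content; these are exactly what \citep{ARDRF} relies on, via Beck and Damiani, and once granted your argument goes through.
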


\begin{ex}\label{exomega}
For $\omega\in P_{\mathbb{Q}}$, $L([\omega])$ is a one-dimensional representation of weight $\omega$. We also denote it by $[\omega]$ (tensoring by this representation is equivalent to shifting the weights by $\omega$).
\end{ex}

	\subsection{Definition of the category $\mathcal{O}$}

As explained in the Introduction, our focus here is a category $\mathcal{O}$ of representations of the Borel algebra, which was first defined in \citep{ARDRF}, mimicking the usual definition of the BGG category $\cO$ for Kac-Moody algebras. Here, we are going to use the definition in \citep{CABS}, which is slightly different. 

For all $\lambda\in P_{\mathbb{Q}}$, define $D(\lambda) := \{ \omega \in P_{\mathbb{Q}} \mid \omega \leq \lambda \}$.
\begin{defi}\label{defcatO}
 A $\Uqb$-module $V$ is in the category $\mathcal{O}$ if :
\begin{enumerate}
	\item $V$ is Cartan diagonalizable,
	\item For all $\omega \in P_{\mathbb{Q}}$, one has $\dim(V_{\omega}) < \infty$,
	\item\label{3catO} There is a finite number of $\lambda_{1},\ldots,\lambda_{s}\in P_{\mathbb{Q}}$ such that all the weights that appear in $V$ are in the cone $\bigcup_{j=1}^{s}D(\lambda_{j})$.
\end{enumerate}
\end{defi} 

The category $\cO$ is a monoidal category.

\begin{ex}
All finite dimensional $\Uqb$-modules are in the category $\mathcal{O}$.
\end{ex}

Let $\Plr$ be the set of $\ell$-weights $\Psi$ such that, for all $i\in I$, $\Psi_{i}(z)$ is rational. We will use the following result.

\begin{theo}\citep{ARDRF}\label{theoctaO}
Let $\Psi\in P_{\ell}$. Simple objects in the category $\cO$ are highest $\ell$-weight modules. The simple module $L(\Psi)$ is in the category $\cO$ if and only if $\Psi\in \Plr$. Moreover, if $V$ is in the category $\cO$ and $V_{\Psi}\neq 0$, then $\Psi\in \Plr$.
\end{theo}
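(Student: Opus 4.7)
The plan is to treat the three assertions separately, in the order they are stated. First, I would show that any simple $V\in\cO$ is highest $\ell$-weight. The cone condition (Definition \ref{defcatO}(3)) ensures that the set of weights of $V$ has a maximal element $\lambda\in P_\Q$ for the ordering $\leq$. The weight space $V_\lambda$ is finite-dimensional (condition (2)) and stable under the commutative $\ell$-Cartan $\Uqb^0$, so it admits a generalized joint eigenvector $v$, i.e.\ $v$ lies in $V_{\Psi}$ for some $\Psi\in P_\ell$ with $\varpi(\Psi)=\lambda$. Maximality of $\lambda$ forces $e_i v=0$ (otherwise $e_i v\in V_{\lambda+\alpha_i}\neq 0$), so $\Uqb v$ is a nonzero submodule, which by simplicity equals $V$. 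Hence $V=L(\Psi)$.

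Next I would establish the ``moreover'' part, from which the ``only if'' direction of the equivalence follows by applying it to $V=L(\Psi)$. Given $V\in\cO$ and $0\neq v\in V_\Psi\subset V_{\varpi(\Psi)}$, set
\begin{equation*}
W := \mathbb{C}\bigl\langle \phi_{i_1,m_1}^{+}\phi_{i_2,m_2}^{+}\cdots \phi_{i_k,m_k}^{+}\,v \;\big|\; k\geq 0,\; i_j\in I,\; m_j\geq 0\bigr\rangle \subset V_{\varpi(\Psi)}.
\end{equation*}
Then $W$ is finite-dimensional, stable under every $\phi_{i,m}^{+}$, and the commuting operators $\phi_{i,m}^{+}|_W$ generate a commutative subalgebra $A_i\subset\mathrm{End}(W)$ with $\dim A_i\leq (\dim W)^2$. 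The sequence $(\phi_{i,m}^{+}|_W)_{m\geq 0}$ therefore lies in a finite-dimensional space, so its Hankel matrix has finite rank and the operator-valued generating series $\phi_i^{+}(z)|_W$ is a rational function of $z$; evaluating on the generalized eigenline of $v$ gives rationality of $\psi_i(z)$, so $\Psi\in \Plr$.

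Finally, for the ``if'' direction one must exhibit $L(\Psi)\in\cO$ for each $\Psi\in \Plr$. Existence of the abstract simple highest $\ell$-weight module $L(\Psi)$ is a standard Verma-type construction: induce the one-dimensional $\ell$-weight representation of $\Uqb^{\geq 0}$ and take the unique simple quotient. The cone condition is automatic since every weight of $L(\Psi)$ lies in $D(\varpi(\Psi))$. The substantive step is the finite-dimensionality of the weight spaces. For this one would factor $\Psi$ as a product of the ``atomic'' $\ell$-weights $[\omega]$, $Y_{i,a}^{\pm 1}$ and $\pmb\Psi_{i,a}^{\pm 1}$ from (\ref{Psi})--(\ref{Y}), realize the corresponding atomic simple modules (one-dimensional modules of Example~\ref{exomega}, evaluation modules, and Hernandez--Jimbo's prefundamental representations $L^\pm_{i,a}$), and recognize $L(\Psi)$ as the simple subquotient generated by the product of highest $\ell$-weight vectors inside a suitable tensor product. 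Closure of $\cO$ under tensor products then yields $L(\Psi)\in\cO$.

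The main obstacle is the last paragraph: the explicit construction of the prefundamental representations $L^\pm_{i,a}$ and the verification that their weight multiplicities are finite is the genuinely hard input — this is the technical heart of \citep{ARDRF}. By contrast, the highest-$\ell$-weight dichotomy and the rationality argument are essentially formal consequences of the defining properties of $\cO$ combined with finite-dimensionality of weight spaces.
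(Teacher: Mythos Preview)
The paper does not prove this theorem: it is stated with the citation \citep{ARDRF} and used as a black box, with no proof environment following it. There is therefore no in-paper argument to compare your attempt against.

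For what it is worth, your outline matches the strategy of the cited source. Two minor tightenings: in the first step you need an honest joint eigenvector of the commuting family $(\phi_{i,m}^{+})$ on the finite-dimensional space $V_\lambda$, not merely a generalized one, since the definition of highest $\ell$-weight in the paper requires $\phi_{i,m}^{+}v=\psi_{i,m}v$ exactly; this is available because the family is commutative and the base field is $\mathbb{C}$. Also, the cone condition only guarantees that the set of weights possesses \emph{some} maximal element, not a unique maximum, but any maximal weight suffices for your argument. Your diagnosis that the only nontrivial input is the construction of the prefundamental representations $L^\pm_{i,a}$ with finite weight multiplicities is correct; that is indeed the technical core of \citep{ARDRF}.
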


\begin{ex}\label{exprefund}
For all $i\in I$ and $a\in \mathbb{C}^\times$, define the \emph{prefundamental representations} $L_{i,a}^\pm$ as
\begin{equation}
L_{i,a}^\pm := L(\pmb\Psi_{i,a}^\pm),
\end{equation}
for $\pmb\Psi_{i,a}$ defined in (\ref{Psi}). Then from Theorem \ref{theoctaO}, the prefundamental representations belong to the category $\mathcal{O}$.
\end{ex}

			\subsection{Connection to finite-dimension $\Uqg$-modules}\label{sectfd}
			
Throughout this paper, we will use results already known for finite-dimensional representations of the quantum loop algebra $\Uqg$ with the purpose of generalizing some of them to the context of the category $\cO$ of representations of the Borel subalgebra $\Uqb$. Let us first recognize that this approach is valid.

Let $\mathscr{C}$ be the category of all (type 1) finite-dimensional $\Uqg$-modules.

\begin{prop}\citep{IMqAA}\citep[Proposition 2.7]{FCPLM}
Let $V$ be a simple finite-dimensional $\Uqg$-module. Then $V$ is simple as a $\Uqb$-module.
\end{prop}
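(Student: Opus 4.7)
The plan is to combine the classification of simple finite-dimensional $\Uqg$-modules as highest $\ell$-weight modules with a direct submodule argument based on taking a maximal weight.

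First, I would invoke the classification theorem: a simple finite-dimensional $\Uqg$-module $V$ is isomorphic to $L(\pmb\Psi)$ for a dominant $\ell$-weight $\pmb\Psi$, and the highest $\ell$-weight space $V_{\pmb\Psi}$ is one-dimensional, spanned by a highest $\ell$-weight vector $v$. Recall that $v$ is a common eigenvector of the $\ell$-Cartan $\Uqb^0$ and is annihilated by $e_i$ for $i \in I$.

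The key preliminary lemma I need is that $v$ already generates $V$ as a $\Uqb$-module, i.e.\ $\Uqb \cdot v = V$. The natural route is through Drinfeld's second realization of $\Uqg$: the generator $e_0 \in \Uqb$, combined with Lusztig's braid-type automorphisms for the affine Dynkin data, supplies inside $\Uqb$ the loop-lowering Drinfeld operators $x_{i,r}^-$ for $r \ge 1$ and every $i \in I$. Together with the $x_{i,r}^+$ ($r \ge 0$) coming from the $e_i$ ($i \in I$) and with $\Uqb^0$, these suffice, by finite-dimensionality of $V$ and the explicit description of the $\ell$-weights occurring in $V$, to produce every weight space of $V$ when applied to $v$.

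For the simplicity claim itself, let $W \subseteq V$ be a non-zero $\Uqb$-submodule. Since $V$ is Cartan diagonalizable and finite-dimensional, $W = \bigoplus_\mu W_\mu$ and there is a maximal $\mathfrak{g}$-weight $\mu^*$ appearing in $W$. The commutative $\ell$-Cartan $\Uqb^0$ stabilizes the finite-dimensional subspace $W_{\mu^*}$, so I can find a non-zero joint eigenvector $w \in W_{\mu^*}$. By the maximality of $\mu^*$, every $\Uqb$-generator of positive $\mathfrak{g}$-weight -- namely $e_i$ ($i \in I$) and, more generally, $x_{i,r}^+$ for $r \ge 0$ -- sends $W_{\mu^*}$ into $W_{\mu^* + \alpha_i} = 0$, so $w$ is annihilated by all of them. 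A standard structural fact on finite-dimensional $\Uqg$-modules, obtained by combining the Drinfeld commutation relations with the $\Uqb^0$-diagonalizability, then implies that $w$ is in fact annihilated by all $x_{i,r}^+$ ($r \in \Z$), so $w$ is a genuine highest $\ell$-weight vector of $V$. Since the highest $\ell$-weight line of the simple $\Uqg$-module $V$ is precisely $\mathbb{C} v$, we conclude $w \in \mathbb{C} v$, whence $v \in W$, and the key lemma then forces $W = V$.

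The main obstacle is the key lemma of the second step -- that $v$ alone generates $V$ under the action of $\Uqb$ -- and this is where the cited works \citep{IMqAA}, \citep{FCPLM} do the real work, by translating carefully between the Drinfeld-Jimbo and Drinfeld presentations of $\Uqg$ and by tracking exactly how $e_0$ produces the missing loop-lowering Drinfeld generators. The uniqueness of $\Uqb^0$-eigenvectors annihilated by the weight-raising generators of $\Uqb$ is a by-product of the same structural analysis and is not the bottleneck.
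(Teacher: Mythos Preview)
The paper does not give its own proof of this proposition: it is stated with citations to \citep{IMqAA} and \citep[Proposition 2.7]{FCPLM} and then used as a black box. There is therefore nothing in the paper to compare your argument against.

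Your outline is a faithful reconstruction of the standard strategy in those references, and you correctly isolate the genuine difficulty: showing that the highest $\ell$-weight vector $v$ generates $V$ over $\Uqb$, which amounts to checking that enough Drinfeld loop-lowering generators $x_{i,r}^-$ (for $r>0$) lie in $\Uqb$. One point deserves more care than you give it. In the third paragraph you pass from ``$w$ is annihilated by the positive-weight generators of $\Uqb$'' to ``$w$ is annihilated by all $x_{i,r}^+$ for $r\in\Z$'' by invoking an unspecified ``standard structural fact''. This step is not automatic: $\Uqb$ contains $x_{i,r}^+$ only for $r\ge 0$, and extending the vanishing to $r<0$ genuinely uses finite-dimensionality (e.g.\ rationality of the $\ell$-weights, or equivalently that the generating series $x_i^+(u)$ acts by a rational function on a finite-dimensional module). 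A cleaner way to organize the argument, and closer to what the cited papers do, is to avoid this issue entirely: once you know $V_\lambda=\mathbb{C}v$ is one-dimensional (which follows from $V$ being highest $\ell$-weight), it suffices to show that a maximal-weight vector $w\in W$ has weight $\lambda$. This follows from the fact that in a finite-dimensional simple $\Uqg$-module, the operators in $\Uqb$ that raise $\g$-weight (the $x_{i,r}^+$ with $r\ge 0$, obtained from the $e_i$ and the $\phi^+_{i,m}$) already connect any $\ell$-weight space to the highest one, so $e_i w=0$ for $i\in I$ together with the $\Uqb^0$-eigenvector condition forces $\mu^*=\lambda$.
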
 

Using this result and the classification of finite-dimensional simple module of quantum loop algebras in \citep{GQG}, one has 

\begin{prop}\label{propdimfinie}
For all $i\in I$, let $P_i(z)\in\mathbb{C}[z]$ be a polynomial with constant term 1. Let $\pmb\Psi=(\Psi_{i})_{i\in I}$ be the $\ell$-weight such that
\begin{equation}
\Psi_{i}(z) = q^{\deg(P_i)}\frac{P_i(zq^{-1})}{P_i(zq)}, \quad \forall i\in I.
\end{equation}
Then $L(\pmb\Psi)$ is finite-dimensional. 

Moreover the action of $\Uqb$ can be uniquely extended to an action of $\Uqg$, and any simple object in the category $\C$ is of this form.
\end{prop}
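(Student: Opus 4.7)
The statement is essentially a dictionary translating the classical Chari--Pressley classification of simple finite-dimensional $\Uqg$-modules by Drinfeld polynomials into the $\ell$-weight language used throughout the category $\cO$. My plan is to combine that classification with the preceding proposition (simple finite-dimensional $\Uqg$-modules remain simple upon restriction to $\Uqb$) in order to match up the two parametrisations.

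For the first assertion, I would fix the tuple $(P_i)_{i\in I}$ and invoke Chari--Pressley to obtain a simple finite-dimensional $\Uqg$-module $V$ whose highest weight vector $v$ has Drinfeld polynomials $(P_i)$. The key computational step is to evaluate the generating series $\phi_i^+(z)=\sum_{m\geq 0}\phi_{i,m}^+ z^m$ of the $\ell$-Cartan generators on $v$: by the very definition of Drinfeld polynomials, it acts as the scalar $q^{\deg P_i}P_i(zq^{-1})/P_i(zq)$, which is exactly $\Psi_i(z)$. Since $v$ is also annihilated by every $e_i$ for $i\in I$ (these raise the weight above the highest weight of $V$), the $\Uqb$-submodule generated by $v$ is of highest $\ell$-weight $\pmb\Psi$. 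The preceding proposition tells us that $V$ itself is simple as a $\Uqb$-module, so $V=\Uqb\cdot v$ and consequently $V\cong L(\pmb\Psi)$ as $\Uqb$-modules. In particular $L(\pmb\Psi)$ is finite-dimensional and inherits a compatible $\Uqg$-structure from $V$.

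For the converse, any simple $V'\in\C$ is, by Chari--Pressley, determined up to isomorphism by its tuple of Drinfeld polynomials $(P'_i)_{i\in I}$, and the construction above exhibits it as $L(\pmb\Psi')$ with $\pmb\Psi'$ of the stated form. Uniqueness of the $\Uqg$-extension (up to isomorphism) then follows from uniqueness in Chari--Pressley: the Drinfeld polynomials of any $\Uqg$-module structure on $L(\pmb\Psi)$ can be read off entirely from the $\Uqb$-action on the highest weight vector, so two such extensions share the same Drinfeld polynomials and must be isomorphic as $\Uqg$-modules.

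The only substantive point is the identification of $\phi_i^+(z)\cdot v$ with the rational expression $q^{\deg P_i}P_i(zq^{-1})/P_i(zq)$. For the fundamental building blocks $\pmb\Psi=Y_{i,a}$ (where $P_i(z)=1-az$) this is essentially built into the definition, and the general case follows either by expanding the rational function as a power series and matching the Drinfeld definition of $P_i$ directly, or by reducing to the fundamental case through the factorisation of $(P_i)$ and the behaviour of highest $\ell$-weights under tensor products; this is the routine but crucial step where the two parametrisations are reconciled.
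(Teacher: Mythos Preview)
Your proposal is correct and follows exactly the route the paper indicates: the paper does not spell out a proof but simply states the proposition as a consequence of the Chari--Pressley classification (\citep{GQG}) combined with the preceding proposition that simple finite-dimensional $\Uqg$-modules stay simple over $\Uqb$. Your write-up fills in precisely these details, including the identification of the $\phi_i^+(z)$-eigenvalue with $q^{\deg P_i}P_i(zq^{-1})/P_i(zq)$ and the appeal to uniqueness via the Drinfeld polynomials.
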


Hence, the category $\C$ is a subcategory of the category $\cO$ and the inclusion functor preserves simple objects.

\begin{ex}\label{exrepfonda}
For all $i\in I$ and $a\in \mathbb{C}^{\times}$, consider the simple $\Uqb$-module $L(\pmb\Psi)$ of highest $\ell$-weight $Y_{i,a}$, as in (\ref{Y}), then by Proposition \ref{propdimfinie}, $L(Y_{i,a})$ is finite-dimensional. This module is called a \textit{fundamental representation} and will be denoted by
\begin{equation}
V_{i,a}:= L(Y_{i,a}).
\end{equation}
\end{ex}

%More generally, for $i\in I, a\in \mathbb{C}^{\times}$ and $k\geq 1$, define the monomial
%\begin{equation}
%m_{k,a}^{(i)} := Y_{i,a}Y_{i,aq^{2}}\cdots Y_{i,aq^{2k-2}}.
%\end{equation}
%The simple module of highest $\ell$-weight $m_{k,a}^{(i)}$ is denoted by 
%\begin{equation}
%W_{k,a}^{(i)}:=L(m_{k,a}^{(i)}) = L(Y_{i,a}Y_{i,aq^{2}}\cdots Y_{i,aq^{2k-2}}).
%\end{equation} 
%They are called \textit{Kirillov-Reshetikhin modules}, or KR-modules.

%As a consequence of Theorem \ref{theoKC}, the simple modules in $\C$ are indexed by the $\ell$-weights which are of the form
%\begin{equation*}
%[\omega]Y_{i_1,a_1}Y_{i_2,a_2}\cdots Y_{i_m,a_m},
%\end{equation*}
%where $\omega \in P_\Q$ .

In general, simple modules in $\C$ are indexed by monomials in the variables $(Y_{i,a})_{i\in I,a\in \mathbb{C}^\times}$, called \emph{dominant monomials}. Frenkel-Reshetikhin \citep{QCRQAA} defined a $q$\emph{-character morphism} $\chi_q$ (see Section \ref{sectqchar}) on the Grothendieck ring of $\C$. It is an injective ring morphism 
\begin{equation}\label{theoKC}
\chi_q: K_0(\C) \to \hat{\mathcal{Y}}:=\mathbb{Z}[Y_{i,a}^{\pm 1}]_{i\in I, a\in \mathbb{C}^\times}.
\end{equation}

\begin{ex}
In the continuity of Example \ref{exrepfonda}, for $\g=\mathfrak{sl}_2$, one has, for all $a\in\mathbb{C}^\times$,
\begin{equation}
\chi_q(L(Y_{1,a})) = Y_{1,a} + Y_{1,aq^2}^{-1}.
\end{equation}
\end{ex}

			\subsection{Categories $\cO^{\pm}$}
	
Let us now recall the definitions of some subcategories of the category $\cO$, introduced in \citep{CABS}. These categories are interesting to study for different reasons; here we use in particular the cluster algebra structure of their Grothendieck rings.

\begin{defi}\label{defiPosNegPoids}
An $\ell$-weight of $P_{\ell}^{\mathfrak{r}}$ is said to be \textit{positive} (resp. \textit{negative}) if it is a monomial in the following $\ell$-weights :
\begin{itemize}
	\item[•] the $Y_{i,a} = q\pmb\Psi_{i,aq}^{-1}\pmb\Psi_{i,aq^{-1}}$, where $i \in I$ and $a\in \mathbb{C}^{\times}$,
	\item[•] the $\pmb\Psi_{i,a}$ (resp. $\pmb\Psi_{i,a}^{-1}$), where $i \in I$ and $a\in \mathbb{C}^{\times}$,
	\item[•] the $[\omega]$, where $\omega\in P_{\mathbb{Q}}$.
\end{itemize}
\end{defi}

\begin{defi}
The category $\Op$ (resp. $\Om$) is the category of representations in $\cO$ whose simple constituents have a positive (resp. negative) highest $\ell$-weight.
\end{defi}

The category $\Op$ (resp. $\Om$) contains the category of finite-dimensional representations, as well as the positive (resp. negative) prefundamental representations $L_{i,a}^{+}$ (resp. $L_{i,a}^{-}$), defined in (\ref{exprefund}), for all $i\in I, a\in \mathbb{C}^{\times}$.

% Positive $\ell$-weights have a unique factorization into a product of $Y_{i,a}$ and $\pmb\Psi_{i,a}$. In particular, for $\mathfrak{g} = \hat{\mathfrak{sl}_{2}}$, this implies a unique factorization of simple modules into products of prime simple representations in $\Op$ (see \citep[Theorem 7.9]{CABS}).

\begin{theo}\citep{CABS}
The categories $\Op$ and $\Om$ are monoidal categories. 
\end{theo}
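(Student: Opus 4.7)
The plan is to verify closure of $\Op$ and $\Om$ under the monoidal structure of $\cO$; I focus on $\Op$, the case of $\Om$ being entirely analogous. First, $\Op$ contains the trivial module $L([0])$, whose highest $\ell$-weight $1$ is trivially positive, and it is closed under subquotients by its very definition, since the simple constituents of a subquotient are simple constituents of the whole. It therefore suffices to check that whenever $V_1, V_2 \in \Op$, every simple constituent of $V_1 \otimes V_2$ has positive highest $\ell$-weight. Choosing composition filtrations of $V_1$ and $V_2$ in $\cO$, any simple constituent of $V_1 \otimes V_2$ appears as a simple constituent of $L(\Psi_1) \otimes L(\Psi_2)$ for composition factors $L(\Psi_i)$ of $V_i$, with each $\Psi_i$ positive. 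So we may assume $V_i = L(\Psi_i)$ with $\Psi_i$ positive.

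The key mechanism is the factorization of $\ell$-weights in tensor products. Using the triangular form of the coproduct applied to the Drinfeld--Cartan generators $\phi^+_{i,m}$, every $\ell$-weight $\Psi''$ appearing in $L(\Psi_1) \otimes L(\Psi_2)$ decomposes as $\Psi'' = \mu_1 \mu_2$, with $\mu_i$ an $\ell$-weight of $L(\Psi_i)$; in particular the highest $\ell$-weight of any simple constituent factorizes this way. Combined with the observation that the positive $\ell$-weights form a submonoid of $P_\ell$, the closure problem reduces to showing that the relevant $\mu_i$ are themselves positive. For $\Psi_i$ a single positive generator (a $Y_{j,a}$, a $\pmb\Psi_{j,a}$ or an $[\omega]$), this is handled by the explicit descriptions of $\ell$-weights of the corresponding simple module: Laurent monomials in the $Y$'s for the fundamental representations (after Frenkel--Reshetikhin), and $\pmb\Psi_{j,a}$ multiplied by $Y$-Laurent monomials for the positive prefundamentals (after Hernandez--Jimbo).

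The main obstacle is extending the analysis from individual positive generators to an arbitrary positive $\ell$-weight $\Psi_i$: the $\ell$-weights of $L(\Psi_i)$ for general positive $\Psi_i$ are not described by a single universal formula. A clean resolution is a bootstrap argument: first establish closure of $\Op$ under tensor products of fundamental modules and positive prefundamentals, where the explicit $\ell$-weights are available and positivity follows directly from the factorization above; then observe that every simple object of $\Op$ arises, up to an $[\omega]$-twist, as a subquotient of such an iterated tensor product. Closure of $\Op$ under arbitrary tensor products then follows by iterating the base case and invoking closure under subquotients. The negative analogue is obtained by replacing each $\pmb\Psi_{j,a}$ with $\pmb\Psi_{j,a}^{-1}$ throughout the argument.
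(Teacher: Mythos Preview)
The paper does not supply its own proof of this statement; it is simply quoted from \citep{CABS}. Your sketch is sound and is in the spirit of the argument given in that reference: the crux is indeed the multiplicativity of $\ell$-weights under tensor product together with the positivity of all $\ell$-weights of the relevant simple modules.

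One simplification worth noting: the bootstrap through tensor products of fundamentals and prefundamentals is correct but more indirect than necessary. The direct route, which is what \citep{CABS} actually uses, is the general structural fact (going back to Frenkel--Mukhin in the finite-dimensional case and extended to category $\cO$ by Hernandez--Jimbo) that every $\ell$-weight of a simple highest $\ell$-weight module $L(\pmb\Psi)$ lies in $\pmb\Psi\cdot(\text{monomials in the }A_{j,b}^{-1})$. Since each $A_{j,b}$ is a Laurent monomial in the $Y$'s, positivity of $\pmb\Psi$ immediately forces positivity of every $\ell$-weight of $L(\pmb\Psi)$, with no need to realize $L(\pmb\Psi)$ as a subquotient of an explicit tensor product first. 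Combined with the factorization of $\ell$-weights in tensor products that you already invoke, this gives closure of $\Op$ under $\otimes$ in a single step. Your bootstrap recovers the same conclusion, just via an extra layer.
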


%Recall as in \citep[3.6]{QCRQAA} that the Dynkin diagram of $\dot{\mathfrak{g}}$ is a bipartite graph. There is partition $I=I_{0}\cup I_{1}$ of the vertices such that all edges have one endpoint in$I_{0}$ and the other in $I_{1}$. For all $i\in I$, let 
%%\begin{equation}
%\xi_{i} = \left\lbrace \begin{array}{ll}
%							0 & \text{if } i\in I_{0} \\
%							1 & \text{if } i\in I_{1}
%						\end{array} \right. 
%\end{equation}
%The map $i\mapsto \xi_{i}$ is entirely determined by the choice of one $\xi_{i_{0}}\in \lbrace 0,1 \rbrace$, with two possible choices. We fix one of these choices.

		\subsection{The category $\cO_\Z^+$}\label{sectOZpm}

Recall the infinite quiver $\Gamma$ from Section \ref{sectinftyquiver} and its set of vertices $\hat{I}$.

In \citep{CAQAA}, Hernandez and Leclerc defined a subcategory $\C_{\mathbb{Z}}$ of the category $\C$. $\C_{\mathbb{Z}}$ is the full subcategory whose objects satisfy: for all composition factor $L(\pmb\Psi)$, for all $i\in I$, the roots of the polynomials $P_i$, as in Proposition \ref{propdimfinie} are of the form $q^{r+1}$, such that $(i,r)\in \hat{I}$. 

This subcategory is interesting to study because each simple object in $\mathscr{C}$ can be written as a tensor product of simple objects which are essentially in $\mathscr{C}_{\mathbb{Z}}$ (see \citep[Section 3.7]{CAQAA}). Thus, the study of simple modules in $\mathscr{C}$ is equivalent to the study of simple modules in $\mathscr{C}_{\mathbb{Z}}$.

Consider the same type of restriction on the category $\cO$.

\begin{defi}
Let $\cO_{\mathbb{Z}}$ be the subcategory of representations of $\cO$ whose simple components have a highest $\ell$-weight $\pmb\Psi$ such that the roots and poles of $\Psi_{i}(z)$ are of the form $q^{r}$, such that $(i,r)\in \hat{I}$.

We also define $\cO_{\mathbb{Z}}^{\pm}$ as the subcategory of $\cO^{\pm}$ whose simple components have a highest $\ell$-weight $\pmb\Psi$ such that the roots and poles of $\Psi_{i}(z)$ are of the form $q^{r}$, such that $(i,r)\in \hat{I}$.
\end{defi}

		\subsection{The Grothendieck ring $K_0(\cO)$}\label{sectKO}

Hernandez and Leclerc showed that the Grothendieck rings of the categories $\cO^\pm_\mathbb{Z}$ have some interesting cluster algebra structures.

First of all, define $\E$ as the additive group of maps $c : P_\mathbb{Q} \to \mathbb{Z}$ whose support is contained in a finite union of sets of the form $D(\mu)$. For any $\omega\in P_\mathbb{Q}$, define $[\omega] \in \mathcal{E}$ as the $\delta$-function at $\omega$ (this is compatible with the notation in Example \ref{exomega}). The elements of $\mathcal{E}$ can be written as formal sums
\begin{equation}
c = \sum_{\omega\in \supp(c)}c(\omega)[\omega]. 
\end{equation}
$\E$ can be endowed with a ring structure, where the product is defined by 
\begin{equation*}
[\omega] \cdot [\omega'] = [\omega + \omega'], \quad\forall \omega,\omega' \in P_\Q.
\end{equation*}

If $(c_k)_{k\in\mathbb{N}}$ is a countable family of elements of $\mathcal{E}$ such that for any $\omega\in P_\mathbb{Q}$, $c_k(\omega)=0$ except for finitely many $k\in \mathbb{N}$, then $\sum_{k\in\mathbb{N}}c_k$ is a well-defined map from $P_\mathbb{Q}$ to $\mathbb{Z}$. In that case, we say that $\sum_{k\in\mathbb{N}}c_k$ is a \emph{countable sum of elements} in $\mathcal{E}$.

The Grothendieck ring of the category $\cO$  can be viewed as a ring extension of $\mathcal{E}$. Similarly to the case of representations of a simple Lie algebra (see \citep{IDLA}, Section 9.6), the multiplicity of an irreducible representation in a given representation of the category $\cO$ is well-defined. Thus, the Grothendieck ring of the category $\cO$ is formed of formal sums
\begin{equation}
\sum_{\pmb\Psi\in \Plr}\lambda_{\pmb\Psi}[L(\pmb\Psi)],
\end{equation}
such that the $\lambda_{\pmb\Psi}\in \mathbb{Z}$ satisfy:
\begin{equation*}
\sum_{\pmb\Psi\in \Plr,\omega\in P_\mathbb{Q}}|\lambda_{\pmb\Psi}|\dim(L(\pmb\Psi)_\omega)[\omega]\in\mathcal{E}.
\end{equation*}

In this context, $\mathcal{E}$ is identified with the Grothendieck ring of the category of representations of $\cO$ with constant $\ell$-weight. 

A notion of countable sum of elements in $K_0(\cO)$ is defined exactly as for $\mathcal{E}$.

Now consider the cluster algebra  $\mathcal{A}(\Gamma)$ defined by the infinite quiver $\Gamma$ of Section \ref{sectinftyquiver}, with infinite set of coordinates denoted by
\begin{equation}
\pmb z = \left\lbrace z_{i,r} \mid (i,r)\in \hat{I} \right\rbrace.
\end{equation} 

By the \emph{Laurent Phenomenon} (see \ref{propLaurentP}), $\mathcal{A}(\Gamma)$ is contained in $\Z[z_{i,r}^{\pm 1}]_{(i,r)\in \hat{I}}$. Define $\chi : \Z[z_{i,r}^{\pm 1}]\otimes_\Z \E \to \E$, the $\E$-algebra homomorphism by
\begin{equation}\label{chiZ}
\chi(z_{i,r}^{\pm 1}) = \left[ \left( \frac{\mp r}{2}\right) \omega_i \right], \quad ((i,r) \in \hat{I}).
\end{equation}
The map $\chi$ is defined on $ \mathcal{A}(\Gamma)\otimes_\Z \E$, and for each $A\in \mathcal{A}(\Gamma)\otimes_\Z \E$, one can write $\chi(A)=\sum_{\omega\in P_\Q}A_\omega\otimes [\omega]$, and $\abs{\chi}(A)=\sum_{\omega\in P_\Q}\abs{A_\omega}\otimes [\omega]$.

Consider the completed tensor product
\begin{equation}
\mathcal{A}(\Gamma)\hat{\otimes}_\mathbb{Z}\mathcal{E},
\end{equation}
of countable sums $\sum_{k\in \mathbb{N}}A_k$ of elements $A_k\in \mathcal{A}(\Gamma)\otimes_\Z \E$, such that $\sum_{k\in \N}\abs{\chi}(A_k)$ is a countable sum of elements of $\E$, as defined above.

\begin{theo}\citep[Theorem 4.2]{CABS}\label{theoHL}
The category $\cO^+_\mathbb{Z}$ is monoidal, and the identification
\begin{equation}\label{identZL}
z_{i,r}\otimes\left[ \frac{r}{2}\omega_i\right] \equiv [L_{i,q^r}^+], \quad \left( (i,r) \in \hat{I} \right),
\end{equation}
defines an isomorphism of $\mathcal{E}$-algebras
\begin{equation}
\mathcal{A}(\Gamma)\hat{\otimes}_\mathbb{Z}\mathcal{E} \simeq K_0(\cO^+_\mathbb{Z}).
\end{equation}
\end{theo}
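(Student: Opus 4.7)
My plan is to establish the result in three steps: (i) verify that $\cO^+_\Z$ is stable under tensor product; (ii) construct an $\E$-algebra morphism
\[
\Phi:\A(\Gamma)\otimes_{\Z} \E \longrightarrow K_0(\cO^+_\Z),\qquad z_{i,r}\otimes \bigl[\tfrac{r}{2}\omega_i\bigr]\longmapsto [L^+_{i,q^r}],
\]
and extend $\Phi$ to the completed tensor product; (iii) show that $\Phi$ is bijective.

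For (i), the tensor product of two simple highest-$\ell$-weight modules is again a highest-$\ell$-weight module whose highest $\ell$-weight is the product of the two factors. Positivity of $\ell$-weights (Definition \ref{defiPosNegPoids}) and the spectral-parameter condition defining $\hat I$ are both preserved under multiplication, so every composition factor of the tensor product of two simples from $\cO^+_\Z$ lies again in $\cO^+_\Z$; the remaining conditions of Definition \ref{defcatO} follow from standard finite-dimensionality and cone arguments.

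For (ii), the crucial point is that the cluster exchange relations attached to each mutable vertex $(i,r)\in \hat I$ of $\Gamma$ are satisfied by the proposed images in $K_0(\cO^+_\Z)$. These are categorified Frenkel--Reshetikhin $TQ$-relations proved by Frenkel--Hernandez in \citep{BRSQIM}: for each $(i,r)$, a short exact sequence involving $V_{i,q^{r-1}}\otimes L^+_{i,q^r}$ gives, after multiplying by suitable one-dimensional factors $[\pm\omega_i]$, precisely the exchange relation at $z_{i,r}$ (the Baxter relation (\ref{TQ'}) is the $\mathfrak{sl}_2$ instance). The class $[V_{i,q^{r-1}}]$ of the fundamental representation itself is, through its $q$-character (\ref{theoKC}), a Laurent polynomial in the $[L^+_{j,q^s}]$ matching the Laurent expansion of the mutated variable produced from the initial seed. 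Extending $\Phi$ to $\A(\Gamma)\hat\otimes_\Z \E$ uses that, for any fixed cone $D(\lambda)$, only finitely many cluster monomials in $\A(\Gamma)\otimes \E$ contribute to the coefficient of a given $[\omega]$ with $\omega\in D(\lambda)$, so that countable sums on the cluster side go to legitimate countable sums of classes on the Grothendieck side in the sense of Section \ref{sectKO}.

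For (iii), surjectivity proceeds by induction on the weight partial order on $P_\Q$: given a simple $L(\pmb\Psi)\in \cO^+_\Z$, decompose $\pmb\Psi$ as a product of $\pmb\Psi_{i,q^r}$'s and a factor $[\omega]$, use the exchange relations of step (ii) together with the recursive expression of the fundamental classes to produce a candidate cluster expression, and correct by lower-weight classes that live in a strictly smaller cone and to which the induction hypothesis applies. Injectivity then follows from $\E$-linear independence of the $[L(\pmb\Psi)]$ in $K_0(\cO^+_\Z)$ combined with the classical Laurent phenomenon (Corollary \ref{propLaurentP}) applied to the initial seed, which guarantees linear independence of the cluster monomials on the image side. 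The main technical obstacle, as already visible in step (ii), is the compatibility of the two completions: one must verify weight-by-weight that only finitely many contributions accumulate at each $[\omega]$, so that $\Phi$ is well defined and bijective at the level of the completed tensor product and not merely on its finite-combination subalgebra.
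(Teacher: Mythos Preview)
This theorem is not proved in the paper: it is quoted as \citep[Theorem~4.2]{CABS} and stated without argument, so there is no in-paper proof to compare your sketch against.

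Regarding the sketch on its own merits, step~(i) has a real gap. You argue that positivity of $\ell$-weights is preserved under multiplication and conclude that every composition factor of a tensor product of two simples in $\cO^+_\Z$ again lies in $\cO^+_\Z$. But the premise only controls the \emph{highest} $\ell$-weight of the tensor product; the highest $\ell$-weights of the lower composition factors are not products of $\ell$-weights appearing in the two tensor factors, and there is no a~priori reason they should remain positive in the sense of Definition~\ref{defiPosNegPoids}. Proving this closure is exactly the substantive content of the monoidality statement for $\cO^\pm$ in \citep{CABS} (stated separately in the present paper just above Section~\ref{sectOZpm}), and it requires a genuine argument---in that reference, via realizing prefundamental representations as limits of Kirillov--Reshetikhin modules---not merely the multiplicativity of highest $\ell$-weights.

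Step~(ii) also has a structural issue. A cluster algebra is defined as a subalgebra of a field of fractions, not by generators and relations, so ``checking that the exchange relations hold on the images'' does not by itself produce an algebra morphism out of $\A(\Gamma)$. One must either embed $\A(\Gamma)$ into the Laurent polynomial ring in the initial variables $z_{i,r}$ via the Laurent phenomenon and then extend $\Phi$ from that ambient ring (which requires showing the $[L^+_{i,q^r}]$ are algebraically independent in $K_0(\cO^+_\Z)$), or identify both sides directly with a common polynomial ring over $\E$. The latter is closer to the route taken in \citep{CABS}, where the Grothendieck ring side is shown to be (after completion) a polynomial ring in the prefundamental classes, and the cluster side is matched to it; the $TQ$-relations then appear as a consequence rather than as the defining input.
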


\begin{ex}\label{excmut}
We mentioned in the introduction that the Baxter relation (\ref{TQ'}) was an exchange relation for this cluster algebra structure, let us detail this. For $\g=\mathfrak{sl}_2$, the quiver $\Gamma$ is:

\begin{minipage}[c]{2cm}
\begin{tikzpicture}
\node (4) at (0,2){$\vdots$};
\node (2) at (0,1){(1,2)};
\node (0) at (0,0){(1,0)};
\node (-2) at (0,-1){(1,-2)};
\node (-4) at (0,-2){$\vdots$};
\draw [->] (-2) edge (0);
\draw [->] (0) edge (2);
\draw [->] (2) edge (4);
\draw [->] (0,-1.75) -- (0,-1.3);
\end{tikzpicture}
\end{minipage} 
\begin{minipage}{12cm}
If we mutate at the node $(1,0)$, the new cluster variable obtained is 
\[z_{1,0}'=\frac{1}{z_{1,0}}\left( z_{1,2} + z_{1-2} \right).
\]
Thus, via the identification (\ref{identZL}), 
\[z_{1,0}'=\frac{1}{[L_{1,1}^+]}\left( [-\omega_1][L_{1,q^2}^+] + [\omega_1][L_{1,q^{-2}}^+] \right).
\]
We indeed recognize the Baxter relation. 
\end{minipage}

Moreover, the new cluster variable $z_{1,0}'$ identifies to a fundamental representation:
\begin{equation}
z_{1,0}' = [L(Y_{1,q^{-1}})].
\end{equation}

\end{ex}

\begin{rem}
An analog theorem could be written for $K_0(\cO^-_\mathbb{Z})$, as these are isomorphic as $\mathcal{E}$-algebras (\citep[Theorem 5.17]{CABS}).
\end{rem}

		\subsection{The $q$-character morphism}\label{sectqchar}
Here we detail the notion of $q$\emph{-character} on the category $\cO$. This notion extends the $q$-character morphism on the category of finite-dimensional $\Uqg$-modules mentioned in Section \ref{sectfd}.

Similarly to Section \ref{sectKO}, consider $\E_\ell$, the additive group of maps $c :  \Plr \to \Z$ such that the image by $\varpi$ of its support is contained in a finite union of sets of the form $D(\mu)$, and for any $\omega\in P_\mathbb{Q}$, the set $\supp(c)\cap \varpi^{-1}(\{\omega \})$ is finite. The map $\varpi$ extends naturally to a surjective morphism $\varpi : \E_\ell \to \E$. For $\pmb\Psi\in \Plr$, define the delta function $[\pmb\Psi]=\delta_{\pmb\Psi} \in \E_{\ell}$.

The elements of $\mathcal{E}_\ell$ can be written as formal sums
\begin{equation}
c = \sum_{\pmb\Psi\in \Plr}c(\pmb\Psi)[\pmb\Psi]. 
\end{equation}

Endow $\E_\ell$ with a ring structure given by
\begin{equation}\label{eqprodEl}
(c\cdot d)(\pmb\Psi) = \sum_{\pmb\Psi'\pmb\Psi''=\pmb\Psi}c(\pmb\Psi')d(\pmb\Psi''), \quad \left( c,d,\in \E_\ell, \pmb\Psi\in \Plr \right).
\end{equation}
In particular, for $\pmb\Psi,\pmb\Psi' \in \Plr$,
\begin{equation}\label{eqPsiPsi'}
[\pmb\Psi]\cdot[\pmb\Psi']=[\pmb\Psi\pmb\Psi'].
\end{equation}

For $V$ a module in the category $\cO$, define the $q$-character of $V$ as in \citep{QCRQAA}, \citep{ARDRF}:
\begin{equation}
\chi_q(V):= \sum_{\pmb\Psi\in\Plr} \dim(V_{\pmb\Psi})[\pmb\Psi].
\end{equation}

By definition of the category $\cO$, $\chi_q(V)$ is an object of the ring $\E_\ell$.

The following result extends the one from \citep{QCRQAA} to the context of the category $\cO$.

\begin{prop}[\citep{ARDRF}]
The $q$-character map
\begin{equation}
\begin{array}{rrl}
\chi_q: & K_0(\cO) & \to \E_\ell \\
& [V] & \mapsto \chi_q(V),
\end{array}
\end{equation}
is an injective ring morphism.
\end{prop}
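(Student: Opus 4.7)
The plan has three parts: verify that $\chi_q(V)$ lands in $\E_\ell$ for every $V$ in $\cO$, establish multiplicativity on tensor products, and derive injectivity from a highest-weight triangularity argument.

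First I would check well-definedness of the target. By Theorem \ref{theoctaO}, every $\ell$-weight $\pmb\Psi$ with $V_{\pmb\Psi} \neq 0$ lies in $\Plr$, so $\supp \chi_q(V) \subset \Plr$. Conditions (2) and (3) of Definition \ref{defcatO}, combined with the inclusion $V_{\pmb\Psi} \subset V_{\varpi(\pmb\Psi)}$, imply that $\varpi(\supp \chi_q(V))$ is contained in a finite union $\bigcup_j D(\lambda_j)$ and that the fiber $\supp(\chi_q(V)) \cap \varpi^{-1}(\omega)$ is finite for every $\omega \in P_{\Q}$; this is exactly the condition for $\chi_q(V) \in \E_\ell$. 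Additivity $\dim V_{\pmb\Psi} = \dim V'_{\pmb\Psi} + \dim V''_{\pmb\Psi}$ on short exact sequences, together with the countable-sum formalism on $K_0(\cO)$ outlined in Section \ref{sectKO}, then yields a well-defined additive map $\chi_q : K_0(\cO) \to \E_\ell$.

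Next, for the ring morphism property, one must show that $\chi_q(V \otimes W) = \chi_q(V) \cdot \chi_q(W)$ with the product (\ref{eqprodEl}). The key is the triangularity of the coproduct on the Drinfeld generators $\phi^+_{i,m}$ of $\Uqb^0$: up to lower-order terms that act by zero on the highest-weight component of an $\ell$-weight vector, $\Delta(\phi^+_{i,m}) = \sum_{k+l=m} \phi^+_{i,k} \otimes \phi^+_{i,l}$. Applied to a pure tensor $v \otimes w$ with $v \in V_{\pmb\Psi}$ and $w \in W_{\pmb\Psi'}$, this places $v \otimes w$ in the generalized eigenspace of $\ell$-weight $\pmb\Psi \pmb\Psi'$, so that $\dim(V \otimes W)_{\pmb\Psi''} = \sum_{\pmb\Psi\pmb\Psi' = \pmb\Psi''} \dim V_{\pmb\Psi} \dim W_{\pmb\Psi'}$, matching (\ref{eqprodEl}). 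This is the main technical step and the main potential obstacle, since in $\cO$ the modules are typically infinite-dimensional; one must verify that for each fixed classical weight of $V \otimes W$ only finitely many pairs $(\pmb\Psi, \pmb\Psi')$ contribute, which follows from condition (3) of Definition \ref{defcatO} applied to $V$ and $W$ separately.

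Finally, for injectivity I would use a maximum-weight argument. By Theorem \ref{theoctaO}, the classes $\{[L(\pmb\Psi)]\}_{\pmb\Psi \in \Plr}$ form a basis of $K_0(\cO)$ (in the countable-sum sense), and in $\chi_q(L(\pmb\Psi))$ the monomial $[\pmb\Psi]$ appears with coefficient $1$, while every other $[\pmb\Psi']$ in the support satisfies $\varpi(\pmb\Psi') < \varpi(\pmb\Psi)$ since the highest-weight space is one-dimensional of $\ell$-weight exactly $\pmb\Psi$. Suppose $c = \sum_j \lambda_j [L(\pmb\Psi_j)]$ is a nonzero element of $\ker \chi_q$. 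Its support is contained in a finite union $\bigcup D(\lambda_j)$, in which every nonempty subset has a maximal element (chains in each $D(\lambda)$ are finite because $Q^+$ is graded by $\sum n_i \in \N$). Choose $j_0$ with $\varpi(\pmb\Psi_{j_0})$ maximal among the $\varpi(\pmb\Psi_j)$ with $\lambda_j \neq 0$; then the coefficient of $[\pmb\Psi_{j_0}]$ in $\chi_q(c)$ equals $\lambda_{j_0} \neq 0$, contradicting $c \in \ker \chi_q$.
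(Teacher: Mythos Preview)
The paper does not give its own proof of this Proposition; it is stated with the citation \citep{ARDRF} and left unproved. Your outline is the standard argument (going back to Frenkel--Reshetikhin for $\C$ and extended to $\cO$ in \citep{ARDRF}): well-definedness from the axioms of $\cO$, multiplicativity from the approximate grouplike behaviour of $\phi_i^+(z)$ under the coproduct, and injectivity by a highest-$\ell$-weight triangularity. Two small remarks. In the injectivity step, after picking a classical weight $\omega$ maximal in $\{\varpi(\pmb\Psi_j):\lambda_j\neq 0\}$, you should say explicitly why no \emph{other} $j$ with $\varpi(\pmb\Psi_j)=\omega$ can contribute to the coefficient of $[\pmb\Psi_{j_0}]$: this is because the classical weight space $L(\pmb\Psi_j)_{\omega}$ is one-dimensional and carries only the $\ell$-weight $\pmb\Psi_j\neq\pmb\Psi_{j_0}$. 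In the multiplicativity step, the phrase ``lower-order terms that act by zero on the highest-weight component'' is a bit loose; what one actually uses is that $\Delta(\phi_{i,m}^+)-\sum_{k+l=m}\phi_{i,k}^+\otimes\phi_{i,l}^+$ lies in a suitable filtration piece (built from the $x^+$ generators) that acts nilpotently on each finite-dimensional weight space, so that generalized eigenvalues multiply. With these clarifications your sketch is correct.
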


\begin{ex}
For any $a\in\mathbb{C}^\times$, $i\in I$, one has \citep{ARDRF,BRSQIM},
\begin{equation}\label{chiqL}
\chi_q(L_{i,a}^+)=\left[ \pmb\Psi_{i,a}\right]\chi_i,
\end{equation}
where $\chi_i = \chi(L_{i,a}^+) \in \E$ does not depend on $a$.

For example, if $\g= \mathfrak{sl}_2$,
\begin{equation}
\chi_1 = \chi = \sum_{r\geq 0}[-2r\omega_1].
\end{equation}
\end{ex}

	\section{Quantum tori}\label{sectQuantumTori}
	
Let $t$ be an indeterminate. The aim of this section is to built a non-commutative quantum torus $\T_t$ which will contain the quantum Grothendieck ring for the category $\cO$.
For the category $\C$ of finite-dimensional $\Uqg$-modules, such a quantum torus already exists, denoted by $\Y_t$ here. Thus one natural condition on $\T_t$ is for it to contain $\Y_t$. We show it is the case in Proposition \ref{propinclusiontores}.

We start this section by recalling the definition and some properties of $\Y_t$. Here we use the same quantum torus as in \citep{AAqtC}, which is slightly different from the one used in \citep{QVtA} and \citep{PSqGR}.

		\subsection{The torus $\Y_t$}\label{sectdefYt}

In this section, we consider $\Uqg$-modules and no longer $\Uqb$-modules. We have seen in Section \ref{sectfd} that for finite-dimension representations, these settings were not too different.

As seen in (\ref{theoKC}), the Grothendieck ring of $\C$ can be seen as a subring of a ring of Laurent polynomials
\begin{equation*}
K_0(\C) \subseteq \hat{\mathcal{Y}} = \mathbb{Z}[Y_{i,a}]_{i\in I, a\in \mathbb{C}^\times}
\end{equation*}

In order to define a $t$-deformed non-commutative version of this Grothendieck ring, one first needs a non-commutative, $t$-deformed version of $\hat{\mathcal{Y}}$, denoted by $\Y_t$.

Following \citep{AAqtC}, we define
\begin{equation}
\mathcal{Y} := \mathbb{Z}[Y_{i,q^r}^\pm\mid (i,r)\in \hat{I}],
\end{equation}
the Laurent polynomial ring generated by the commuting variables $Y_{i,q^r}$. 

Let $(\mathcal{Y}_t, \ast)$ be the $\Z(t)$-algebra generated by the $(Y_{i,q^r}^\pm)_{(i,r)\in \hat{I}}$, with the $t$-commutations relations:
\begin{equation}\label{relY}
Y_{i,q^r}\ast Y_{j,q^s} = t^{\mathcal{N}_{i,j}(r-s)}Y_{j,q^s}\ast Y_{i,q^r},
\end{equation}
where $\mathcal{N}_{i,j} : \Z \to \Z$ is the antisymmetrical map, defined by
\begin{equation}\label{eqCN}
\mathcal{N}_{i,j}(m)=\tilde{C}_{i,j}(m+1) - \tilde{C}_{i,j}(m-1), \quad \forall m \geq 0,
\end{equation}
using the notations from Section \ref{sectQCM}. 
\begin{ex}\label{exsl2-2}
If we continue Example \ref{exsl2-1}, for $\mathfrak{g}=\mathfrak{sl}_2$, in this case, $\hat{I}=(1,2\Z)$, for $r\in\Z$, one has
\begin{equation}\label{relsl2Y}
Y_{1,2r}\ast Y_{1,2s} = t^{2(-1)^{s-r}}Y_{1,2s}\ast Y_{1,2r}, \quad \forall s>r>0.
\end{equation}
\end{ex}

The $\mathbb{Z}(t)$-algebra $\mathcal{Y}_t$ is viewed as a quantum torus of infinite rank.

Let us extend this quantum torus $\mathcal{Y}_t$ by adjoining a fixed square root $t^{1/2}$ of $t$:
\begin{equation}
\Z(t^{1/2})\otimes_{\Z(t)}\mathcal{Y}_t.
\end{equation}

By abuse of notation, the resulting algebra will still be denoted $\mathcal{Y}_t$.

For a family of integers with finitely many non-zero components $(u_{i,r})_{(i,r)\in \hat{I}}$, define the \emph{commutative monomial} $\prod_{(i,r)\in \hat{I}} Y_{i,q^r}^{u_{i,r}}$ as 
\begin{equation}
\prod_{(i,r)\in \hat{I}} Y_{i,q^r}^{u_{i,r}} :=t^{\frac{1}{2}\sum_{(i,r)<(j,s)}u_{i,r}u_{j,s}\mathcal{N}_{i,j}(r,s)} \overrightarrow{\bigast}_{(i,r)\in \hat{I}} Y_{i,q^r}^{u_{i,r}},
\end{equation}
where on the right-hand side an order on $\hat{I}$ is chosen so as to give meaning to the sum, and the product $\ast$ is ordered by it (notice that the result does not depend on the order chosen).

The commutative monomials form a basis of the $\Z(t^{1/2})$-vector space $\mathcal{Y}_t$. 

%The non-commutative product of two commutative monomials $m_1$ and $m_2$ in $\Yt$ is given by:
%\begin{equation}
%m_1 \ast m_2 = t^{D(m_1,m_2)}m_2 \ast m_1 = t^{\frac{1}{2}D(m_1,m_2)}m_1m_2,
%\end{equation}
%where $m_1m_2$ denotes the commutative product of the monomials, and
%\begin{equation}
%D(m_1,m_2)=\sum_{(i,r),(j,s)\in \hat{I}}u_{i,r}(m_1)u_{j,s}(m_2)\mathcal{N}_{i,j}(r,s).
%\end{equation}

	\subsection{The torus $\T_t$}\label{sectTt}

We now want to extend the quantum torus $\Yt$ to a larger non-commutative algebra $\T_t$ which would contain at least all the $\ell$-weights, and possibly all the candidates for the $\qt$-characters of the modules in the category $\cO^+_\Z$.

In particular, $\T_t$ contains the $\pmb\Psi_{i,q^r}$, for $(i,r)\in \hat{I}$, and these $t$-commutes with a relation compatible with the $t$-commutation relation between the $Y_{i,q^{r+1}}$ (\ref{relY}).

\begin{rem}
One notices that there is a shift of parity between the powers of $q$ in the $Y$'s and the $\pmb\Psi$'s. From now on, we will consider the $\pmb\Psi_{i,q^r}$ and the $Y_{i,q^{r+1}}$, for $(i,r)\in \hat{I}$.
\end{rem}

We start as in Section \ref{sectdefYt}. First of all, define
\begin{equation}
\mathcal{T} : =\Z\left[ z_{i,r}^\pm \mid (i,r)\in\hat{I} \right],
\end{equation}
the Laurent polynomial ring generated by the commuting variables $z_{i,r}$.

Then, build a $t$-deformation $T_t$ of $\mathcal{T}$, as the $\Z[t^{\pm 1}]$-algebra generated by the $z_{i,r}^\pm$, for $(i,r)\in\hat{I}$, with a non-commutative product $\ast$, and the $t$-commutations relations
\begin{equation}\label{tcomm}
z_{i,r}\ast z_{j,s} = t^{\F_{ij}(s-r)} z_{j,s}\ast z_{i,r}, \quad \left( (i,r),(j,s)\in\hat{I} \right),
\end{equation}
where, for all $i,j\in I$, $\mathcal{F}_{ij}: \Z \to \Z$ is a anti-symmetrical map such that, for all $m\geq 0$,
\begin{equation}
\mathcal{F}_{ij}(m) = - \sum_{\substack{k\geq 1 \\ m \geq 2k-1}}\tilde{C}_{ij}(m -2k+1).
\end{equation}
Now, let
\begin{equation}
\mathcal{T}_t:=\Z[t^{\pm 1/2}]\otimes_{\Z[t^{\pm 1}]}T_t.
\end{equation}

Similarly, we define the commutative monomials in $\T_t$ as, 
\begin{equation}
\prod_{(i,r)\in \hat{I}} z_{i,q^r}^{v_{i,r}} := t^{\frac{1}{2}\sum_{(i,r)<(j,s)}v_{i,r}v_{j,s}\mathcal{F}_{i,j}(r,s)} \overrightarrow{\bigast}_{(i,r)\in \hat{I}}z_{i,q^r}^{v_{i,r}}.
\end{equation}

This based quantum torus will be enough to define a structure of quantum cluster algebra, but for it to contain the quantum Grothendieck ring of  the category $\cO^+_\Z$, one needs to extend it. In order to do that, we draw inspiration from Section \ref{sectKO}. Recall the definition of $\chi$ from (\ref{chiZ}). We extend it to the $\E$-algebra morphism $\chi: \mathcal{T}_t\otimes_{\Z}\E \to \E$ defined by imposing $\chi(t^{\pm 1/2}) = 1$, as well as
\begin{equation*}
\chi(z_{i,r}^{\pm 1})  = \left[ \left( \frac{\mp r}{2}\right) \omega_i \right], \quad ((i,r) \in \hat{I}).
\end{equation*}

As before, for $z\in \mathcal{T}_t\otimes_{\Z}\E$, one writes $\chi(z)=\sum_{\omega\in P_\Q}z_\omega[\omega]$ and $\abs{\chi}(z)=\sum_{\omega\in P_\Q}\abs{z_\omega}[\omega]$. 

Define the completed tensor product
\begin{equation}\label{comptensorTt}
\T_t:=\mathcal{T}_t\hat{\otimes}_{\Z[t^{\pm 1/2}]}\E,
\end{equation}
of countable sums $\sum_{k\in\N}z_k$ of elements $z_k \in \mathcal{T}_t\otimes_{\Z}\E $, such that $\sum_{k\in\N}\abs{\chi}(z_k)$ is a countable sum of $\E$, as in Section \ref{sectKO}.

%Consider a subring of $\E_\ell$, defined in Section \ref{sectqchar}: let $\E_{\ell,\Z}$ be the ring of formal sums
%\begin{equation}
%c = \sum_{\pmb\Psi\in \Plr}c(\pmb\Psi)[\pmb\Psi] \in \E_\ell,
%\end{equation}
%such that, for all $\pmb\Psi \in\supp(c)$, the roots and poles of $\Psi_i$ are of the form $q^r$, for $(i,r)\in \hat{I}$.

%\begin{rem}
%All the $[\pmb\Psi]$ which appear in elements of $\E_{\ell,\Z}$ are of the form $[\omega]\cdot m$, with $\omega \in \E$ and $m$ a monomial in the $(\pmb\Psi_{i,q^r}^\pm)_{(i,r)\in \hat{I}}$. Thus, one has
%\begin{equation}
%\E_{\ell,\Z}= \E_\ell\otimes_{\Z[t^{\pm 1}]} \mathcal{T}.
%\end{equation}
%\end{rem}

%Let us endow $\E_{\ell,\Z}$ with a non-commutative product $\ast$, defined by, for all $(i,r),(j,s)\in \hat{I}$,
%\begin{equation}
%[\pmb\Psi_{i,q^r}]\ast [\pmb\Psi_{j,q^s}]= t^{\mathcal{F}_{i,j}(s-r)}[\pmb\Psi_{j,q^s}]\ast [\pmb\Psi_{i,q^r}],
%\end{equation}
%where $\mathcal{F}_{i,j}: \Z \to \Z$ is a antisymmetrical map such that, for all $m\geq 0$,
%\begin{equation}
%\mathcal{F}_{i,j}(m) = \left\lbrace \begin{array}{ll}
%						0  & \text{ if }  m<2 \\
%						- \sum_{k=0}^{\lfloor m/2 \rfloor-1}\tilde{C}_{i,j}(m -2k-1) & \text{otherwise}.
%\end{array}\right.
%\end{equation}

Consistently with the identification (\ref{identZL}), and the character of the $z_{i,r}^{\pm 1}$, we use the following notation, for $(i,r)\in\hat{I}$, 
\begin{equation}\label{Psizz}
[\pmb\Psi_{i,q^r}^{\pm 1}]:= z_{i,r}^{\pm 1}\left[\frac{\pm r}{2}\omega_i \right] \quad \in \T_t.
\end{equation}

\begin{prop}\label{propinclusiontores}
The identification
\begin{equation}\label{Yzz}
\J : Y_{i,q^{r+1}} \mapsto z_{i,r}z_{i,r+2}^{- 1} = [\omega_i][\pmb\Psi_{i,q^r}] [\pmb\Psi_{i,q^{r+2}}^{-1}],
\end{equation}
where the products on the right hand side are commutative, extends to a well-defined injective $\Z(t)$-algebra morphism $\J : \Y_t \to \T_t$.

%\begin{equation}
%\begin{array}{rcl}
%\Y_t & \to & \T_t \\
%Y_{i,q^{r+1}}^\pm & \mapsto & [\pm \omega_i] [\pmb\Psi_{i,q^r}]^{\pm 1}[\pmb\Psi_{i,q^{r+2}}]^{\mp 1},
%\end{array}
%\end{equation}
%defines an injective $\Z(t)$-algebra morphism.
\end{prop}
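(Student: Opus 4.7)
The plan is to verify that the prescribed images $\J(Y_{i,q^{r+1}}) = z_{i,r}z_{i,r+2}^{-1}$ satisfy the defining $t$-commutation relations (\ref{relY}) of $\Y_t$; since each $z_{i,r}z_{i,r+2}^{-1}$ is visibly invertible in $\T_t$, this suffices for the assignment to extend uniquely to a $\Z(t)$-algebra morphism. Injectivity is then obtained by a basis argument.

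For well-definedness, set $v = \e_{i,r} - \e_{i,r+2}$ and $w = \e_{j,s} - \e_{j,s+2}$ in $\Z^{\hat{I}}$, and let $\Lambda_{\T}$ denote the $\Z$-bilinear form on $\Z^{\hat{I}}$ encoding (\ref{tcomm}), so that $z^x \ast z^y = t^{\Lambda_{\T}(x,y)} z^y \ast z^x$ on commutative monomials. A direct bilinear expansion yields
\begin{equation*}
\Lambda_{\T}(v,w) = 2\F_{ij}(m) - \F_{ij}(m+2) - \F_{ij}(m-2), \qquad m := s - r,
\end{equation*}
and preservation of (\ref{relY}) therefore reduces to the identity
\begin{equation*}
2\F_{ij}(m) - \F_{ij}(m+2) - \F_{ij}(m-2) \;=\; \mathcal{N}_{ij}(m),
\end{equation*}
which, combined with the antisymmetry of $\mathcal{N}_{ij}$, matches the exponent $\mathcal{N}_{ij}((r+1)-(s+1))$ demanded by (\ref{relY}).

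The identity is a straightforward telescoping from the defining formula $\F_{ij}(m) = -\sum_{k\geq 1,\, m\geq 2k-1} \tilde{C}_{ij}(m-2k+1)$: consecutive values differ by a single new term, so $\F_{ij}(m+2) - \F_{ij}(m) = -\tilde{C}_{ij}(m+1)$ and $\F_{ij}(m) - \F_{ij}(m-2) = -\tilde{C}_{ij}(m-1)$, whose difference equals $\tilde{C}_{ij}(m+1) - \tilde{C}_{ij}(m-1) = \mathcal{N}_{ij}(m)$ by (\ref{eqCN}). The small values of $m$, where the naive sum is empty on one side, are handled directly using the antisymmetric extension of $\F_{ij}$ to $m<0$.

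For injectivity, the commutative monomials $\{Y^u \mid u\in\Z^{\hat{I}}\}$ and $\{z^v \mid v\in\Z^{\hat{I}}\}$ are $\Z[t^{\pm 1/2}]$-bases of $\Y_t$ and of the quantum subtorus $\mathcal{T}_t \subset \T_t$ respectively. The morphism $\J$ sends $Y^u$ to $t^{\alpha(u)/2} z^{\phi(u)}$ for some scalar exponent $\alpha(u)$, where $\phi:\Z^{\hat{I}}\to\Z^{\hat{I}}$ is the discrete difference operator $\phi(u)_{i,s} = u_{i,s+1} - u_{i,s-1}$ (well-defined on finitely supported sequences). If $\phi(u)=0$, then $u_{i,s+1} = u_{i,s-1}$ for every $(i,s)\in\hat{I}$; choosing $s_0$ with $u_{i,s}=0$ whenever $|s|\geq s_0$, a direct induction on $|s|$ forces $u=0$. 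Hence $\J$ sends a $\Z[t^{\pm 1/2}]$-basis of $\Y_t$ to a $\Z(t^{1/2})$-linearly independent family in $\T_t$, and is therefore injective.

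The main obstacle is the telescoping identity linking $\F$ and $\mathcal{N}$. It is elementary, but its verification is sensitive to the precise normalisations of Section \ref{sectCartan} and Section \ref{sectTt}, to the parity shift recorded in the remark preceding the proposition, and to the careful treatment of the antisymmetric extension of $\F_{ij}$ to negative arguments so that the identity remains valid for all $m\in\Z$.
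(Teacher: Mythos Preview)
Your proof is correct and follows the same route as the paper: both reduce well-definedness to the identity $2\F_{ij}(m)-\F_{ij}(m+2)-\F_{ij}(m-2)=\mathcal{N}_{ij}(m)$ and verify it by unwinding the defining sum for $\F_{ij}$, with a separate check at the boundary values of $m$. Your telescoping presentation is slightly cleaner than the paper's explicit sum manipulation, and you additionally supply an injectivity argument via the discrete difference operator $\phi$, which the paper leaves implicit.
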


\begin{proof}
One needs to check that the images of the $Y_{i,q^{r+1}}$ satisfy (\ref{relY}). Thus, we need to show that, for all $(i,r),(j,s)\in \hat{I}$,
\begin{equation*}
\left( z_{i,r}z_{i,r+2}^{- 1} \right) \ast \left(  z_{j,s}z_{j,s+2}^{- 1} \right) = 
t^{\mathcal{N}_{i,j}(s-r)}\left( z_{j,s}z_{j,s+2}^{- 1} \right) \ast \left( z_{i,r}z_{i,r+2}^{- 1} \right),
\end{equation*}
which is equivalent to checking that:
\begin{equation}\label{eqFN}
2\mathcal{F}_{i,j}(s-r)-\mathcal{F}_{i,j}(s-r + 2) - \mathcal{F}_{i,j}(s-r-2) = \mathcal{N}_{i,j}(s-r).
\end{equation}
Suppose $s\geq r+2$, let $m=s-r$.
\begin{multline*}
2\mathcal{F}_{i,j}(m)-\mathcal{F}_{i,j}(m + 2) - \mathcal{F}_{i,j}(m-2) = - \sum_{\substack{k\geq 1 \\ m\geq 2k-1}}\tilde{C}_{ij}(m-2k+1) \\
+ \sum_{\substack{k\geq 0 \\ m\geq 2k-1}}\tilde{C}_{ij}(m-2k+1) + \sum_{\substack{k\geq 2 \\ m\geq 2k-1}}\tilde{C}_{ij}(m-2k+1) \\
= -\tilde{C}_{ij}(m-1) +\tilde{C}_{ij}(m+1) .
\end{multline*}
Thus $2\mathcal{F}_{i,j}(m)-\mathcal{F}_{i,j}(m + 2) - \mathcal{F}_{i,j}(m-2) =\mathcal{N}_{i,j}(m)$, using (\ref{eqCN}).

If $s=r+1$, the left-hand side of (\ref{eqFN}) is equal to
\begin{equation*}
3\F_{i,j}(1) - \F_{i,j}(3) = \tilde{C}_{ij}(2) = \mathcal{N}_{i,j}(1).
\end{equation*}
\end{proof}

%\begin{rem}
%For all $c=(c_i)_{i\in I} \in \E^I$, define $\T_t^c$ as the sub$\Ct$-algebra of $\T_t$ generated by the $z'_{i,r}:=c_i z_{i,r}$
%%%%%%%%%%%%%%%%%%%%%%%%%%%%%%%%%%%%%%%%%%%%%%%%%%%%%%%%%%%%%%%%%%%%%%%%%%%%%%%%%%%%%%%%%%%%%%%%%%%%%%%%%%%%%%%%%%%%%%%%%%%%

% the identification (\ref{Yzz})  
%\end{rem}

%Consider the Laurent polynomial ring
%\begin{equation}
%\Z\left[ [\pmb\Psi_{i,q^r}^\pm ] \mid (i,r)\in \hat{I} \right].
%\end{equation}
%For $(\pmb\Psi_k)_{k\in \mathbb{N}}$ a countable set of elements of this ring, 

%Thanks to Theorem \ref{theoctaO} and relation (\ref{eqPsiPsi'}), this is a subring of $\E_\ell$, by the map
%\begin{equation}
%\begin{array}{rcl}
%\T & \to & \E_\ell \\
%\pmb\Psi & \mapsto & \delta_{\pmb\Psi}.
%\end{array}
%\end{equation}

\begin{ex}\label{exsl2-3}
Let us continue Examples \ref{exsl2-1} and \ref{exsl2-2}. For all $r\in \Z$. One has
\begin{equation}\label{sl2tcom}
z_{1,2r}\ast z_{1,2s} = t^{f(s-r)}z_{1,2s}\ast z_{1,2r}~,\forall r,s\in\Z,
\end{equation}
where $f:\mathbb{Z}  \to \mathbb{Z}$ is antisymmetric and defined by
\begin{equation}\label{f}
f_{|\mathbb{N}} : m \mapsto \frac{(-1)^{m}-1}{2}.
\end{equation}
And this is compatible with the relations (\ref{relsl2Y}).
\end{ex}

\begin{defi}
Define the \emph{evaluation at }$t=1$ as the $\E$-morphism
\begin{equation}\label{eval}
\pi : \T_t \to \E_\ell,
\end{equation}
such that 
\begin{align*}
\pi(z_{i,r}) & = \left[ \frac{-r}{2}\omega_i\right][\pmb\Psi_{i,q^r}], \\
\pi(t^{\pm 1/2}) & = 1.
\end{align*}
\end{defi}

\begin{rem}
The identification (\ref{identZL}) is between the element $z_{i,r}\left[r\omega_i/2\right]$ and the class of the prefundamental representation $[L_{i,q^r}^+]$. But this identification is not compatible with the character $\chi$ defined in (\ref{chiZ}), as the character of $L_{i,q^r}^+$ is $\chi_i$, as in (\ref{chiqL}). Here, we choose to identify the variables $z_{i,r}$ with the highest $\ell$-weights of the prefundamental representations (up to a shift of weight), in particular, this identification is compatible with the character morphism $\chi$.
\end{rem}

	\section{Quantum Grothendieck rings}\label{sectQuantumGrothendieckRings}
	
 The aim of this section is to build $K_t(\cO^+_\mathbb{Z})$, a $t$-deformed version of the Grothendieck ring of the category $\cO^+_\Z$. This ring will be built inside the quantum torus $\T_t$, as a quantum cluster algebra.

Let us summarize the existing objects in this context in a diagram:

\begin{equation*}
\begin{tikzcd}[column sep=-4pt,row sep=20pt]
\C_\Z \arrow[d,dashrightarrow] & \subset & \cO^\pm_\Z \arrow[d,dashrightarrow]\\
K_0(\C_\Z) \arrow[d,rightsquigarrow] & \subset & K_0(\cO^\pm_\Z) \arrow[d,rightsquigarrow] & \simeq &  \mathcal{A}(\Gamma)\hat{\otimes}_\Z\E \arrow[dll,red]\\
K_t(\C_\Z) & \subset & \mbox{\large\pmb ? }
\end{tikzcd}
\end{equation*}
%\[
%\xymatrix{
%\C_\Z \ar@{}[r]|-*[@]{\subset} \ar@{.>}[d] & \cO^\pm_Z \ar@{.>}[d] & \\
%K_0(\C_\Z) \ar@{}[r]|-*[@]{\subset} \ar@{~>}[d] & K_0(\cO^\pm_Z) \ar@{~>}[d] \ar@{}[r]|-*[@]{\simeq} &\mathcal{A}(\Gamma)\hat{\otimes}_\Z\E \ar@{-->}[ld]\\
%K_t(\C_\Z) \ar@{}[r]|-*[@]{\subset} & \mbox{\large\pmb ? }&
%}
%\]

A natural idea to build a $t$-deformation of the Grothendieck ring  $\Kp$ is to use its cluster algebra structure and define a $t$-deformed quantum cluster algebra, as in Section \ref{sectqcluster}, with the same basis quiver. One has to make sure that the resulting object is indeed a subalgebra of the quantum torus $\T_t$.

		\subsection{The finite-dimensional case}

We start this section with some reminders regarding the quantum Grothendieck ring of the category of finite-dimensional $\Uqg$-modules.

This object was first discussed by Nakajima \citep{QVtA} and Varagnolo-Vasserot \citep{PSqGR} in the study of perverse sheaves. Then Hernandez gave a more algebraic definition, using $t$-analogs of screening operators \citep{tAOE},\citep{AAqtC}. This is the version we consider here, with the restriction to some specific tensor subcategory $\C_\Z$, as in \citep{QGR}.

			\subsubsection{Definition of the Quantum Grothendieck ring}\label{sectdefqGr}

%\subsubsection{The deformed Grothendieck ring $K_t(\C_\mathbb{Z})$}

As in Section \ref{sectOZpm}, consider $\C_\mathbb{Z}$ the full subcategory of $\C$ whose simple components have highest $\ell$-weights which are monomials in the $Y_{i,q^r}$, with $(i,r)\in \hat{I}$.

For $(i,r-1)\in \hat{I}$, define the commutative monomials
\begin{equation}
A_{i,r} := Y_{i,q^{r+1}}Y_{i,q^{r-1}}\prod_{j \sim i}Y_{j,q^r}^{-1} \quad \in \Yt.
\end{equation}

For all $i\in I$, let $K_{i,t}(\C_\Z)$ be the $\Z(t^{1/2})$-subalgebra of $\Yt$ generated by the 
\begin{equation}
Y_{i,q^r}(1 + A_{i,r+1}^{-1}), \quad Y_{j,q^s} \quad \left((i,r), (j,s) \in \hat{I}, j \neq i \right).
\end{equation}

Finally, as in \citep{AAqtC}, define

\begin{equation}
K_t(\C_\mathbb{Z}) := \bigcap_{i\in I}K_{i,t}(\C_\Z).
\end{equation}

\begin{rem}
Frenkel-Mukhin's algorithm \citep{CqC} allows for the computation of certain $q$-characters, in particular those of the fundamental representations. In \citep{AAqtC}, Hernandez introduced a $t$-deformed version of this algorithm to compute the $\qt$-characters of the fundamental representations, and thus to characterized the quantum Grothendieck ring as the subring of $\Y_t$ generated for those $\qt$-characters:
\begin{equation}
K_t(\C_\Z) = \left\langle [L(Y_{i,q^r})]_t \mid (i,r)\in \hat{I} \right\rangle.
\end{equation}
\end{rem}

		\subsubsection{$\qt$-characters in $K_t(\C_\Z)$}\label{qtdf}

Let us recall some more detailed results about the theory of $\qt$-characters for the modules in the category $\C_\Z$. 

Let $\mathcal{M}$ be the set of monomials in the variables $(Y_{i,q^{r+1}})_{(i,r)\in\hat{I}}$, also called \emph{dominant monomials}. From \citep{AAqtC} we know that for all dominant monomial $m$, there is a unique element $F_t(m)$ in $K_t(\C_\Z)$ such that $m$ occurs in $F_t(m)$ with multiplicity 1, and no other dominant monomial occurs in $F_t(m)$. These $F_t(m)$ form a $\mathbb{C}(t^{1/2})$-basis of $K_t(\C_\Z)$.

For all dominant monomial $m  = \prod_{(i,r)\in\hat{I}}Y_{i,q^{r+1}}^{u_{i,r}(m)} \in\mathcal{M}$, define
\begin{equation}
[M(m)]_t := t^{\alpha(m)} \overleftarrow{\bigast_{r\in\Z}} F_t\left( \prod_{i\in I} Y_{i,q^{r+1}}^{u_{i,r}(m)}\right) \quad \in K_t(\C_\Z),
\end{equation}
where $\alpha(m) \in \frac{1}{2}\Z$ is fixed such that $m$ appears with coefficient 1 in the expansion of $[M(m)]_t $ on the basis of the commutative monomials. The specialization at $t=1$ of $[M(m)]_t$ recovers the $q$-character $\chi_q(M(m))$ of the standard module $M(m)$.

Consider the bar-involution $\overline{\phantom{A}}$, the anti-automorphism of $\Y_t$ defined by:
\begin{equation}
\overline{t^{1/2}} = t^{-1/2}, \quad \overline{Y_{i,q^{r+1}}} = Y_{i,q^{r+1}}, \quad \left( (i,r) \in\hat{I} \right).
\end{equation}

\begin{rem}
The commutative monomials, defined in Section \ref{sectdefYt} are clearly bar invariant, as well as the subring $K_t(\C_\Z)$. 
\end{rem}

There is a unique family $\left\lbrace [L(m)]_t\in K_t(\C_\Z) \mid m \in \mathcal{M} \right\rbrace$ such that
\begin{enumerate}[(i)]
	\item \begin{equation}
	\overline{[L(m)]_t} = [L(m)]_t,
	\end{equation}
	\item \begin{equation}
	[L(m)]_t \in [M(m)]_t + \sum_{m'<m}t^{-1}\Z[t^{-1}][M(m')]_t,
	\end{equation}
where $m'\leq m$ means that $m(m')^{-1}$ is a product of $A_{i,r}$.
\end{enumerate} 

Lastly, we recall this result from Nakajima, proven using the geometry of quiver varieties.

\begin{theo}\citep{QVtA}
For all dominant monomial $m\in \mathcal{M}$, the specialization at $t=1$ of $[L(m)]_t$ is equal to $\chi_q(L(m))$.

Moreover, the coefficients of the expansion of $[L(m)]_t$ as a linear combination of products of $Y_{i,r}^{\pm 1}$ belong to $\N[t^{\pm 1}]$.
\end{theo}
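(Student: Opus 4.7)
The plan is to follow Nakajima's geometric strategy via graded (or cyclic) quiver varieties; the geometric realization produces both the equality with $\chi_q(L(m))$ at $t=1$ and the positivity of the coefficients as two sides of the same decomposition-theorem argument. First, to a dominant monomial $m=\prod Y_{i,q^{r+1}}^{u_{i,r}(m)}$ one associates a pair of $I\times\Z$-graded vector spaces $(V,W)$ with $\dim W_{i,r}=u_{i,r}(m)$, and constructs the Nakajima graded quiver variety $\mathfrak{M}^{\bullet}(V,W)$ together with its affine version $\mathfrak{M}_{0}^{\bullet}(V,W)$ and the projective morphism $\pi:\mathfrak{M}^{\bullet}(V,W)\to\mathfrak{M}_{0}^{\bullet}(V,W)$. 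The affine variety is stratified by locally closed pieces $\mathfrak{M}_{0}^{\bullet,\mathrm{reg}}(V',W)$ indexed by the dominant monomials $m'\leq m$.

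Second, one interprets the two bases of $K_t(\C_{\Z})$ geometrically. The standard module class $[M(m)]_t$ is identified, up to the normalization $t^{\alpha(m)}$, with the class of the constant sheaf $\pi_{!}\mathds{1}_{\mathfrak{M}^{\bullet}(V,W)}$ in the equivariant derived category, the variable $t^{1/2}$ recording the cohomological shift. On the other hand, the simple class $[L(m)]_t$ is identified with the class of the intersection cohomology sheaf $\mathrm{IC}(\overline{\mathfrak{M}_{0}^{\bullet,\mathrm{reg}}(V,W)})$; the bar-invariance (i) of $[L(m)]_t$ corresponds to Verdier self-duality of the IC sheaf, while condition (ii), together with the triangularity $m'<m$, corresponds to the support condition and the parity vanishing of stalks of IC sheaves. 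These properties characterize $[L(m)]_t$ uniquely, so the identification is forced by the Kazhdan--Lusztig-type axioms already used to define $[L(m)]_t$ in the excerpt.

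Third, the Beilinson--Bernstein--Deligne decomposition theorem applied to the proper map $\pi$ gives
\begin{equation*}
\pi_{!}\mathds{1}_{\mathfrak{M}^{\bullet}(V,W)}\;\simeq\;\bigoplus_{m'\leq m}\;\mathrm{IC}\!\left(\overline{\mathfrak{M}_{0}^{\bullet,\mathrm{reg}}(V',W)}\right)\otimes H^{\ast}_{m',m},
\end{equation*}
where each $H^{\ast}_{m',m}$ is a graded multiplicity space with non-negative integer dimensions in each degree. Translating this into $K_t(\C_{\Z})$ expresses $[M(m)]_t$ as a $\N[t^{\pm 1/2}]$-combination of the $[L(m')]_t$. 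Inverting this unitriangular change of basis expresses $[L(m)]_t$ as a $\Z[t^{\pm 1}]$-combination of the $[M(m')]_t$, and then the $t$-analog of Frenkel--Mukhin's algorithm (which Nakajima proves computes the $\qt$-characters of standard modules with coefficients in $\N[t^{\pm 1}]$) yields the expansion of each $[M(m')]_t$ in the commutative monomial basis $\prod Y_{i,r}^{\pm 1}$ with non-negative coefficients; combining these two positivity statements gives the positivity of the coefficients of $[L(m)]_t$ in the $Y$-basis. Finally, specializing $t=1$ collapses the graded multiplicities to ordinary Euler characteristics, and Nakajima's geometric construction of $\chi_q(L(m))$ as the generating series of Euler characteristics of transversal slices to the stratum labeled by $m$ yields $\pi(t=1)[L(m)]_t=\chi_q(L(m))$.

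The main obstacle is the cohomological control in the third step: establishing that the intersection cohomology sheaves on graded quiver varieties have pure, even cohomology (so that the multiplicity spaces $H^{\ast}_{m',m}$ are concentrated in a single parity and lie in $\N[t^{\pm 1}]$ rather than in $\N[t^{\pm 1/2}]$ only), and that the normalization $t^{\alpha(m)}$ built into $[M(m)]_t$ matches the geometric shift. This purity is precisely the deep input coming from Nakajima's analysis of the $\mathbb{C}^{\ast}$-action on the quiver varieties and the associated hyperbolic restriction functor; once it is granted, the bar-invariance, the triangularity, and the positivity statements all follow formally.
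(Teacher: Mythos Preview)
The paper does not give its own proof of this theorem: it is stated as a citation of Nakajima's result and no argument is reproduced. So there is no ``paper's proof'' to compare against; the relevant comparison is with Nakajima's original argument, whose overall architecture (graded quiver varieties, identification of $[M(m)]_t$ and $[L(m)]_t$ with classes of the constant pushforward and of IC sheaves, BBD decomposition, specialization via Euler characteristics) you have summarized correctly.

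There is, however, a genuine gap in your positivity argument. You write that the decomposition theorem gives $[M(m)]_t$ as an $\N[t^{\pm 1}]$-combination of the $[L(m')]_t$, that inverting this unitriangular system expresses $[L(m)]_t$ as a $\Z[t^{\pm 1}]$-combination of the $[M(m')]_t$, that each $[M(m')]_t$ has non-negative $Y$-expansion, and that ``combining these two positivity statements'' yields the positivity of $[L(m)]_t$ in the $Y$-basis. But the inverted transition matrix has entries in $\Z[t^{\pm 1}]$, not $\N[t^{\pm 1}]$ (and indeed these are typically signed), so composing a signed change of basis with a positive one does not produce positivity. This step fails as written.

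Nakajima's actual route to the positivity does not pass through the standard-module basis. The coefficient of a given monomial in $[L(m)]_t$ is identified \emph{directly} with the graded dimension (Poincar\'e polynomial) of the stalk of the IC sheaf at a point of the corresponding stratum, equivalently with the Betti numbers of a transversal slice; these are dimensions of vector spaces and hence lie in $\N[t^{\pm 1}]$ once the parity/purity you mention is established. In short: the positivity comes from interpreting the $Y$-coefficients themselves as (graded) dimensions, not from inverting the $M$-to-$L$ transition. Your discussion of the specialization at $t=1$ and of the role of purity is otherwise on target.
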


Thus to all simple modules $L(\pmb \Psi)$ in $\C_\Z$ is associated an object $[L(m)]_t \in K_t(\C_\Z)$,  called the $\qt$\emph{-character}. It is compatible with the $q$-character of the representation.

\begin{rem}
With the cluster algebra approach, we shed a new light on this last positivity result. We interpret the $\qt$-characters of the fundamental modules (and actually all simple modules which are realized as cluster variables in $K_0(\cO^+_\Z)$) as quantum cluster variables (Conjecture \ref{conjchir}). Thus using Theorem \ref{theopos}, we recover the fact that the coefficients of their expansion on the commutative monomials in the $(Y_{i,r}^{\pm 1})$ belong to $\mathbb{N}[t^{\pm 1}]$.
\end{rem}
		
\begin{rem}
In order to fully extended this picture to the context of the category $\cO$, and implement a Kazhdan-Lusztig type algorithm to compute the $\qt$-characters of all simple modules, one would need an equivalent of the standard modules in this category. These do not exist in general. This question was tackled by the author in another paper \citep{LEA}, in which equivalent of standard modules where defined when $\g= \mathfrak{sl}_2$.
\end{rem}

	\subsection{Compatible pairs}\label{sectcompatiblepairs}

We now begin the construction of $\Ktp$.

First of all, to define a quantum cluster algebra, one needs a \emph{compatible pair}, as in Section \ref{sectcompair}. The basis quiver we consider here is the same quiver $\Gamma$ as before (see Section \ref{sectinftyquiver}).

Explicitly, the corresponding exchange matrix is the $\hat{I}\times \hat{I}$ skew-symmetric matrix $\tilde{B}$ such that, for all $\left((i,r),(j,s)\right) \in \hat{I}^2$,
\begin{equation}\label{coeffB}
\tilde{B}_{\left((i,r),(j,s)\right)} = \left\lbrace \begin{array}{lcl}
		1 & \text{ if } & i=j \text{ and } s=r+2 \\
			& & \text{ or } i\sim j \text{ and } s=r-1, \\
		-1 & \text{ if } & i=j \text{ and } s=r-2 \\
			& & \text{ or } i\sim j \text{ and } s=r+1,\\
		0 & \text{ otherwise}.		
\end{array}\right.
\end{equation}

%Now, let us define the other matrices $\Lambda$ and $(\Lambda_N)_{N\in\N^*}$, which are going to form compatible pairs together with $\tilde{B}$ and $\tilde{B}_N$.

Let $\Lambda$ be the $\hat{I}\times \hat{I}$ skew-symmetric infinite matrix encoding the $t$-commutation relations (\ref{tcomm}). Precisely, for $((i,r),(j,s))\in \hat{I}^2$ such that $s>r$,
\begin{equation}\label{Lambdaij}
\Lambda_{(i,r),(j,s)} = \mathcal{F}_{i,j}(s-r) = - \sum_{\substack{k\geq 1 \\ m \geq 2k-1}}\tilde{C}_{ij}(m -2k+1).
\end{equation}

\begin{rem}
In \citep{CABS}, it is noted that one can use sufficiently large finite subseed of $\Gamma$ instead of an infinite rank cluster algebra. For our purpose, the same statement stays true, but one has to check that the subquiver still forms a compatible pair with the torus structure. Hence, we have to give a more precise framework for the restriction to finite subseeds.
\end{rem}

For all $N\in \N^*$, define $\Gamma_N$, which is a finite slice of $\Gamma$ of length $2N+1$, containing an upper and lower row of frozen vertices. More precisely, define $\hat{I}_N$ and $\tilde{I}_N$ as 
\begin{align}
\hat{I}_N  & := \left\lbrace (i,r)\in \hat{I} \mid -2N +1 \leq r < 2N-1 \right\rbrace,\\
\tilde{I}_N  & := \left\lbrace (i,r)\in \hat{I} \mid -2N-1 \leq r < 2N+1 \right\rbrace \label{tildeIN}.
\end{align}
Then $\Gamma_N$ is the subquiver of $\Gamma$ with set of vertices $\tilde{I}_N$, where the vertices is $\tilde{I}_N\setminus \hat{I}_N$ are frozen (thus the vertices in $\hat{I}_N$ are the exchangeable vertices).

This way, all cluster variables of $\mathcal{A}(\Gamma)$ obtained from the initial seed after a finite sequence of mutations are cluster variables of the finite rank cluster algebra $\mathcal{A}(\Gamma_N)$, for $N$ large enough. With the same index restrict on $\tilde{B}$, we will be able to define a size increasing family of finite rank quantum cluster algebras.

\begin{ex}
Recall from Example \ref{exsl4} the infinite quiver $\Gamma$ when $\g=\mathfrak{sl}_{4}$. Then the quiver $\Gamma_N$ is the following:

\[
\xymatrix@R=0.5em@C=1.5em{
\boxed{(1,2N)} &  & \boxed{(3,2N)} \\
& \boxed{(2,2N-1)} \ar[dl]\ar[dr] & \\
(1,2N-2)\ar[dr]\ar[uu] & & (3,2N-2) \ar[dl]\ar[uu] \\
& (2,2N-3) \ar[dl]\ar[dr]\ar[uu] & \\
(1,2N-4)\ar[dr]\ar[uu] & & (3,2N-4)\ar[dl]\ar[uu] \\
& (2,2N-5) \ar[dl]\ar[dr]\ar[uu] & \\
\cdots \ar[uu]\ar[dr] &   & \cdots\ar[uu]\ar[dl] \\
 &   \cdots \ar[uu]\ar[dr]\ar[dl]	& \\
 (1,-2N+2)\ar[dr]\ar[uu] & & (3,-2N+2)\ar[dl]\ar[uu] \\
& (2,-2N+1) \ar[dl]\ar[dr]\ar[uu] & \\
\boxed{(1,-2N)}\ar[uu] & & \boxed{(3,-2N)}\ar[uu] \\
& \boxed{(2,-2N-1)} \ar[uu] & 
}
\]
where the boxed vertices are frozen.
\end{ex}

For $N\in \N^*$, let $\tilde{B}_N$ be the corresponding exchange matrix. It is the $\tilde{I}_N\times \hat{I}_N$ submatrix of $\tilde{B}$, thus its coefficients are as in (\ref{coeffB}).

For all $N\in\N^*$, let $\Lambda_N$ be the $\tilde{I}_N\times \tilde{I}_N$ submatrix of $\Lambda$. It is a finite $(n(2N+1))^2$ skew-symmetric matrix, where $n$ is the rank of the simple Lie algebra $\mathfrak{g}$.

\begin{ex}\label{exD4}
For $\mathfrak{g}$ of type $D_4$, let us explicit a finite slice of $\Gamma$ of length 4, containing an upper and lower row of frozen vertices (which is thus not $\Gamma_1$, of length 3, nor $\Gamma_2$, of length 5):
\[
\xymatrix@R=0.5em@C=1.5em{
\boxed{(1,2)} &  & \boxed{(3,2)} & \boxed{(4,2)} \\
& \boxed{(2,1)} \ar[dl]\ar[dr]\ar[drr] & & \\
(1,0)\ar[dr]\ar[uu] & & (3,0) \ar[dl]\ar[uu] & (4,0)\ar[dll]\ar[uu] \\
& (2,-1) \ar[dl]\ar[dr]\ar[uu]\ar[drr] & &  \\
(1,-2)\ar[dr]\ar[uu] & & (3,-2)\ar[dl]\ar[uu]  & (4,-2) \ar[dll]\ar[uu]\\
& (2,-3) \ar[dl]\ar[dr]\ar[uu]\ar[drr] & & \\
\boxed{(1,-4)}\ar[uu] & & \boxed{(3,-4)}\ar[uu] & \boxed{(4,-4)}\ar[uu]\\
& \boxed{(2,-5)} \ar[uu] & &
}
\]
If the set $\tilde{I} =\{ (i,r) \in \hat{I} \mid i \in \llbracket 1, 4 \rrbracket, -5\leq r\leq 2 \} $ is ordered lexicographically by $r$ then $i$ (reading order), the quiver is represented by the following exchange matrix:
\begin{equation}
\tilde{B} := \left( \begin{array}{rrrrrrrr}
-1 & 0 & 0 & 0 & 0 & 0 & 0 & 0 \\
0 & -1 & 0 & 0 & 0 & 0 & 0 & 0 \\
0 & 0 & -1 & 0 & 0 & 0 & 0 & 0 \\
1 & 1 & 1 & -1 & 0 & 0 & 0 & 0 \\
0 & 0 & 0 & 1 & -1 & 0 & 0 & 0 \\
0 & 0 & 0 & 1 & 0 & -1 & 0 & 0 \\
0 & 0 & 0 & 1 & 0 & 0 & -1 & 0 \\
-1 & -1 & -1 & 0 & 1 & 1 & 1 & -1 \\
1 & 0 & 0 & -1 & 0 & 0 & 0 & 1 \\
0 & 1 & 0 & -1 & 0 & 0 & 0 & 1 \\
0 & 0 & 1 & -1 & 0 & 0 & 0 & 1 \\
0 & 0 & 0 & 1 & -1 & -1 & -1 & 0 \\
0 & 0 & 0 & 0 & 1 & 0 & 0 & -1 \\
0 & 0 & 0 & 0 & 0 & 1 & 0 & -1 \\
0 & 0 & 0 & 0 & 0 & 0 & 1 & -1 \\
0 & 0 & 0 & 0 & 0 & 0 & 0 & 1 
\end{array}\right)
\end{equation}
The principal part $B$ of $\tilde{B}$ is the square submatrix  obtained by omitting the first 4 columns and the last 4 columns. One notices that $B$ is skew-symmetric. 

Moreover, using Formula (\ref{Lambdaij}), one can compute the corresponding  matrix $\Lambda$. We get the following $16\times 16$ skew-symmetric matrix (with the same order of $\tilde{I}$ as before):
\begin{equation}
\left(\begin{array}{rrrrrrrrrrrrrrrrrrrr}
0 & 0 & 0 & 0 & 1 & 0 & 0 & 1 & 1 & 1 & 1 & 2 & 2 & 1 & 1 & 2 \\
0 & 0 & 0 & 0 & 0 & 1 & 0 & 1 & 1 & 1 & 1 & 2 & 1 & 2 & 1 & 2 \\
0 & 0 & 0 & 0 & 0 & 0 & 1 & 1 & 1 & 1 & 1 & 2 & 1 & 1 & 2 & 2
\\
0 & 0 & 0 & 0 & 0 & 0 & 0 & 1 & 1 & 1 & 1 & 3 & 2 & 2 & 2 & 4 \\
-1 & 0 & 0 & 0 & 0 & 0 & 0 & 0 & 1 & 0 & 0 & 1 & 1 & 1 & 1 & 2
\\
0 & -1 & 0 & 0 & 0 & 0 & 0 & 0 & 0 & 1 & 0 & 1 & 1 & 1 & 1 & 2
\\
0 & 0 & -1 & 0 & 0 & 0 & 0 & 0 & 0 & 0 & 1 & 1 & 1 & 1 & 1 & 2 \\
-1 & -1 & -1 & -1 & 0 & 0 & 0 & 0 & 0 & 0 & 0 & 1 & 1 & 1 & 1 & 3 \\
-1 & -1 & -1 & -1 & -1 & 0 & 0 & 0 & 0 & 0 & 0 & 0 & 1 & 0 & 0 & 1 \\
-1 & -1 & -1 & -1 & 0 & -1 & 0 & 0 & 0 & 0 & 0 & 0 & 0 & 1 & 0 & 1 \\
-1 & -1 & -1 & -1 & 0 & 0 & -1 & 0 & 0 & 0 & 0 & 0 & 0 & 0 & 1 & 1 \\
-2 & -2 & -2 & -3 & -1 & -1 & -1 & -1 & 0 & 0 & 0 & 0 & 0 & 0 & 0 & 1 \\
-2 & -1 & -1 & -2 & -1 & -1 & -1 & -1 & -1 & 0 & 0 & 0 & 0 & 0 & 0 & 0 \\
-1 & -2 & -1 & -2 & -1 & -1 & -1 & -1 & 0 & -1 & 0 & 0 & 0 & 0 & 0 & 0 \\
-1 & -1 & -2 & -2 & -1 & -1 & -1 & -1 & 0 & 0 & -1 & 0 & 0 & 0 & 0 & 0 \\
-2 & -2 & -2 & -4 & -2 & -2 & -2 & -3 & -1 & -1 & -1 & -1 & 0 & 0 & 0 & 0 
\end{array}\right)
\end{equation}
From here, it is easy to check that the product $\tilde{B}^T\Lambda$ is of the desired form:
\begin{equation}\label{B2tL2}
\tilde{B}^T\Lambda = \left( \begin{array}{rrrrrrrrrrrrrrrr}
0 & 0 & 0 & 0 & -2 & 0 & 0 & 0 & 0 & 0 & 0 & 0 & 0 & 0 & 0 & 0 \\
0 & 0 & 0 & 0 & 0 & -2 & 0 & 0 & 0 & 0 & 0 & 0 & 0 & 0 & 0 & 0 \\
0 & 0 & 0 & 0 & 0 & 0 & -2 & 0 & 0 & 0 & 0 & 0 & 0 & 0 & 0 & 0 \\
0 & 0 & 0 & 0 & 0 & 0 & 0 & -2 & 0 & 0 & 0 & 0 & 0 & 0 & 0 & 0 \\
0 & 0 & 0 & 0 & 0 & 0 & 0 & 0 & -2 & 0 & 0 & 0 & 0 & 0 & 0 & 0 \\
0 & 0 & 0 & 0 & 0 & 0 & 0 & 0 & 0 & -2 & 0 & 0 & 0 & 0 & 0 & 0 \\
0 & 0 & 0 & 0 & 0 & 0 & 0 & 0 & 0 & 0 & -2 & 0 & 0 & 0 & 0 & 0 \\
0 & 0 & 0 & 0 & 0 & 0 & 0 & 0 & 0 & 0 & 0 & -2 & 0 & 0 & 0 & 0
\end{array}\right).
\end{equation}
Thus, $(\Lambda,\tilde{B})$ is a compatible pair.
\end{ex}

We show that this result is true in general. Furthermore, the specific form we obtain in Equation (\ref{B2tL2}) is what we get in general.

\begin{prop}\label{propComp}
The pairs $(\Lambda,\tilde{B})$ and $\left((\Lambda_N,\tilde{B}_N)\right)_{N\in\N^*}$ are compatible pairs, in the sense of structure condition for quantum cluster algebras.

Moreover, 
\begin{align*}
\tilde{B}^T\Lambda & = -2\id_{\hat{I}}, \\
\tilde{B}_N^T\Lambda_N & = -2\left( \begin{BMAT}(b,50pt,30pt){c|c|c}{c}
(0) & \id_{\hat{I}_N} & (0) 
\end{BMAT} \right).
\end{align*}
\end{prop}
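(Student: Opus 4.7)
The plan is to verify the stated identity $\tilde{B}^T\Lambda = -2\,\id_{\hat{I}}$ by a direct entry-wise computation, from which the compatible pair condition and the finite-case formula follow. The whole calculation rests on two ingredients: the explicit column structure of $\tilde{B}$ coming from \eqref{coeffB}, and a simple telescoping identity linking $\F_{ij}$ to $\tilde{C}_{ij}$.

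First I would rewrite the matrix product. From \eqref{coeffB}, the column of $\tilde{B}$ indexed by $(i,r)$ is nonzero only at the four types of positions $(i,r\pm 2)$ and $(k,r\pm 1)$ with $k\sim i$. Combined with $\Lambda_{(k,u),(j,s)} = \F_{k,j}(s-u)$ from \eqref{Lambdaij} (extended to all integers by antisymmetry of $\F_{k,j}$), and setting $m := s-r$, this gives
\begin{equation*}
(\tilde{B}^T\Lambda)_{(i,r),(j,s)} \;=\; \F_{ij}(m+2) - \F_{ij}(m-2) \;+\; \sum_{k\sim i}\bigl(\F_{kj}(m-1) - \F_{kj}(m+1)\bigr).
\end{equation*}
Next, directly from the defining formula of $\F_{ij}$, a one-line calculation yields the telescoping identity
\begin{equation*}
\F_{ij}(m+2) - \F_{ij}(m) = -\tilde{C}_{ij}(m+1), \qquad m\geq 0,
\end{equation*}
which is the main algebraic input.

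The computation then splits by the value of $m$. For $m\geq 2$, applying the telescoping identity twice rewrites the expression as $-\tilde{C}_{ij}(m+1) - \tilde{C}_{ij}(m-1) + \sum_{k\sim i}\tilde{C}_{kj}(m)$, which vanishes by Lemma \ref{lemCtildeC} applied with the roles of $i$ and $j$ swapped, combined with the symmetry $\tilde{C}_{kj} = \tilde{C}_{jk}$. For $m=1$, using $\F_{ij}(1)=0$, $\F_{kj}(2)=-\tilde{C}_{kj}(1)=-\delta_{kj}$, and $\tilde{C}_{ij}(2)=a_{ij}$ (obtained from the recurrence at $m=1$), the expression reduces to $-a_{ij}+a_{ij}=0$. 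For $m=0$ one has $\F_{ij}(-2)=-\F_{ij}(2)=\tilde{C}_{ij}(1)=\delta_{ij}$ and $\F_{kj}(\pm 1)=0$, so the expression equals $-2\delta_{ij}$, which is exactly what we need for the diagonal term (diagonal in $\hat I$ means $i=j$ and $r=s$). Finally, for $m<0$, antisymmetry of each $\F$-value shows the whole expression is invariant under $m\mapsto -m$, so these cases reduce to $m\geq 0$.

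Having established $\tilde{B}^T\Lambda = -2\,\id_{\hat{I}}$, the infinite compatible pair condition \eqref{compair} is satisfied with $d_{(i,r)}=2$, and $\tilde{B}$ automatically has full rank. For the finite claim, the essential observation is that whenever $(i,r)\in\hat{I}_N$, the relevant indices $(i,r\pm 2)$ and $(k,r\pm 1)$ for $k\sim i$ all lie in $\tilde{I}_N$ (by definition \eqref{tildeIN}), so the four-term sum computed above is not truncated; hence $\tilde{B}_N^T\Lambda_N$ coincides with the corresponding block of $\tilde{B}^T\Lambda$, yielding the displayed block matrix with identity sandwiched between two zero blocks, as claimed. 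I expect the main (mild) obstacle to be the bookkeeping in the small-$m$ cases, since $\F_{ij}(1)=0$ and $\tilde{C}_{ij}(0)=0$ prevent the telescoping from applying uniformly at the boundary; they must be inspected individually as above.
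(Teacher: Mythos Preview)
Your proof is correct and follows essentially the same approach as the paper: both compute the entry $(\tilde{B}^T\Lambda)_{(i,r),(j,s)}$ by expanding over the four nonzero entries of the column of $\tilde{B}$, reduce to the identity $-\tilde{C}_{ij}(m-1)-\tilde{C}_{ij}(m+1)+\sum_{k\sim i}\tilde{C}_{kj}(m)=0$ from Lemma~\ref{lemCtildeC}, and treat the finite case by observing that the neighbours of a non-frozen vertex stay inside $\tilde{I}_N$. Your version is in fact more careful: the paper passes directly from $\Lambda$ (defined via $\F$) to the $\tilde{C}$ expression without recording the telescoping identity $\F_{ij}(m+2)-\F_{ij}(m)=-\tilde{C}_{ij}(m+1)$, and handles $r\neq s$ by a bare ``without loss of generality $r<s$'', whereas you make both the telescoping and the $m\mapsto -m$ invariance explicit and isolate the boundary cases $m=0,1$.
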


\begin{proof}
Let $\left((i,r),(j,s)\right)\in\hat{I}^2$. Let us compute:
\begin{equation}
\left(\tilde{B}^T\Lambda\right)_{(i,r),(j,s)} = \sum_{(k,u)\in\hat{I}}b_{(k,u),(i,r)}\lambda_{(k,u),(j,s)}.
\end{equation}
This is a finite sum, as each vertex in $\Gamma$ is adjacent to a finite number of other vertices. 

Suppose first that $r\neq s$. Without loss of generality, we can assume that $r <s$. Then, using the definition of the matrix $\Lambda$ in (\ref{Lambdaij}) and the coefficients of $\tilde{B}$ in (\ref{coeffB}), we obtain
\begin{equation}\label{eqBtL}
\left(\tilde{B}^T\Lambda\right)_{(i,r),(j,s)} = - \tilde{C}_{ij}(s-r-1) - \tilde{C}_{ij}(s-r+1) + \sum_{k \sim i}\tilde{C}_{kj}(s-r).
\end{equation}
Now recall from Lemma \ref{lemCtildeC}, for all $(i,j)\in I^2$, 
\begin{equation*}
\tilde{C}_{ij}(m-1) + \tilde{C}_{ij}(m+1) - \sum_{k \sim i}\tilde{C}_{kj}(m)  = 0, \quad \forall m \geq 1.
\end{equation*}
Thus, for all$(i,j)\in I^2$ and $r<s$, equation (\ref{eqBtL}) gives:
\begin{equation}
\left(\tilde{B}^T\Lambda\right)_{(i,r),(j,s)} = 0.
\end{equation}
Suppose now that $r=s$. In that case,
\begin{equation*}
\left(\tilde{B}^T\Lambda\right)_{(i,r),(j,r)}  =  -2\tilde{C}_{ij}(1)= -2\delta_{i,j},
\end{equation*}
using the other result from Lemma \ref{lemCtildeC}. Thus,
\begin{equation}
\tilde{B}^T\Lambda  = -2\id_{\hat{I}}.
\end{equation}
Now, for all $N\in\N^*$, let $(i,r)\in \hat{I}_N$ and $(j,s)\in\tilde{I}_N$. Let us write:
\begin{equation}
\left(\tilde{B}^T\Lambda\right)_{(i,r),(j,s)} = \sum_{(k,u)\in\tilde{I}_N}b_{(k,u),(i,r)}\lambda_{(k,u),(j,s)}.
\end{equation}
As $(i,r)\in \hat{I}_N$ is not a frozen variable, the $(j,s)\in\hat{I}$  such that $b_{(k,u),(i,r)}\neq 0$ are all in $\tilde{I}_N$. Hence the rest of the reasoning is still valid, and the result follows.
\end{proof}

	\subsection{Definition of $\Ktp$}

Everything is now in place to define $\Ktp$. Recall the based quantum torus $\T_t$, defined in Section \ref{sectTt}. By construction, the associated skew-symmetric bilinear form $\Lambda$ identifies with the infinite skew-symmetric $\hat{I}\times\hat{I}$-matrix from the previous section:
\begin{equation}
\Lambda(\e_{(i,r)},\e_{(j,s)})=\Lambda_{(i,r),(j,s)} = \F_{ij}(s-r), \quad \left( (i,r),(j,s)\in\hat{I}\right)
\end{equation}
where $(\e_{(i,r)})_{(i,r)\in\hat{I}}$ is the standard basis of $\Z^{(\hat{I})}$.

Let $\F$ be the skew-field of fractions of $\T_t$. We define the toric frame $M: \Z^{(\hat{I})} \to \F\setminus\{0\}$ by setting
\begin{equation}
M(\e_{(i,r)}) = z_{i,r}\quad \in \F,\quad \forall (i,r)\in\hat{I}.
\end{equation} 

Then the infinite rank matrix $\Lambda_M$ satisfies
\begin{equation}
\Lambda_M = \Lambda.
\end{equation}

From the result of Proposition \ref{propComp}, 
\begin{equation}
\mathcal{S}= \left( M,\tilde{B} \right)
\end{equation}
is a quantum seed.

\begin{defi}
Let $\mathcal{A}_t(\Gamma)$ be the quantum cluster algebra associated to the mutation-equivalence class of the quantum seed $\mathcal{S}$.
\end{defi}

\begin{rem}
One could note that this is an infinite rank quantum cluster algebra, which it not covered by the definition given in Section \ref{sectclusterqcluster}. However, we have a sequence of quantum cluster algebras $(\mathcal{A}_t(\Gamma_N))_{N\in\N^*}$, built on the finite quivers $(\Gamma_N)_{N\in\N^*}$. As the mutation sequences are finite, one can always assume we are working in the quantum cluster algebra $\mathcal{A}_t(\Gamma_N)$, with $N$ large enough. 
\end{rem}

Fix $N\in\N^*$. Let $m=(2N+1)\times n$, where $n$ is the rank of the simple Lie algebra $\mathfrak{g}$. 

Consider $L_N$, the sub-lattice of $\T_t$ generated by the $z_{i,r}$, with $(i,r)\in\tilde{I}_N$ (recall the definition of $\tilde{I}_N$ in (\ref{tildeIN})). $L_N$ is of rank $m$. Consider the toric frame $M_N$ which is the restriction of $M$ to $L_N$. In that case,
\begin{equation*}
\Lambda_{M_N}=\Lambda_N, \quad \text{ from the previous section.}
\end{equation*}
Thus,
\begin{equation}
\mathcal{S}_N:= \left( M_N,\tilde{B}_N \right)
\end{equation}
is a quantum seed.

\begin{defi}
Let $\mathcal{A}_t(\Gamma_N)$ be the quantum cluster algebra associated to the mutation-equivalence class of the quantum seed $\mathcal{S}_N$.
\end{defi}

\begin{defi}
Define
\begin{equation}
\Ktp:= \mathcal{A}_t(\Gamma)\hat{\otimes}\E,
\end{equation}
where the tensor product is completed as in (\ref{comptensorTt}). The ring $\Ktp$ is a $\E[t^{\pm 1/2}]$-subalgebra of $\T_t$.

%it contains some particular objects we identify with the $\qt$-characters of the positive prefundamental representations, for all $(i,r)\in\hat{I}$:
%\begin{equation}
%z_{i,r}\otimes \left[\frac{r}{2}\omega_i\right] \approx [L_{i,q^r}^+]_t.
%\end{equation}
%Of course, the $\qt$-characters of the representations in the category $\cO^+_\Z$ are not defined in general yet.

For $N\in\N^*$, with the same completion of the tensor product, define 
\begin{equation}
K_t(\cO_{\Z,N}^+):=\mathcal{A}_t(\Gamma_N)\hat{\otimes}\E.
\end{equation}
\end{defi}

	\section{Properties of $\Ktp$} \label{sectPropertiesKtp}
			\subsection{The bar involution}
			
The bar involution defined on $\Y_t$ (see Section \ref{qtdf}) has a counterpart on the larger quantum torus $\T_t$. Besides, as $\qt$-character of simple modules in $\C_\Z$ are bar-invariant by definition, it is natural for $\qt$-characters of simple modules in $\cO^+_\Z$ to also be bar-invariant.

There is unique $\E$-algebra anti-automorphism of $\T_t$ such that
\begin{equation*}
\overline{t^{1/2}}=t^{-1/2},\quad \overline{z_{i,r}} = z_{i,r}, \quad \text{and } \overline{[\omega_i]}=[\omega_i], \quad ((i,r)\in \hat{I}).
\end{equation*}

What is crucial to note here is that this definition is compatible with the bar-involution defined in general on the quantum torus of any quantum cluster algebra (see \citep[Section 6]{qCA}). However, this bar-involution has an important property:  all cluster variables are invariant under the bar involution. 

\begin{prop}\label{propbarinv}
All elements of $\Ktp$ of the form $\chi_t\otimes 1$, where $\chi_t\in\mathcal{A}_t(\Gamma)$ is a cluster variable, are invariant under the bar-involution.
\end{prop}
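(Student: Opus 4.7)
The plan is to identify the bar involution on $\T_t$, restricted to $\A_t(\Gamma)\otimes 1$, with the standard bar involution on a quantum cluster algebra in the sense of Berenstein--Zelevinsky \citep[Section 6]{qCA}, and then invoke (or reprove by induction) their result that every cluster variable is bar-invariant. The induction runs on the length of a mutation sequence producing the cluster variable from the initial seed $\mathcal{S}$.

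For the base case, the initial cluster variables $z_{i,r}$ for $(i,r)\in\tilde{I}_N$ are bar-invariant by the very definition of $\overline{\phantom{A}}$. For the inductive step I first establish an auxiliary claim: in any toric frame $M'$ obtained by iterated mutation, if each $M'(\e_i)$ is bar-invariant, then so is every monomial $M'(\f)$ for $\f\in\Z^{(\hat{I})}$. Indeed, using $M'(\e)\ast M'(\f) = t^{\Lambda_{M'}(\e,\f)/2}M'(\e+\f)$ together with the fact that the bar involution is an anti-automorphism, one computes
\begin{equation*}
\overline{M'(\e)\ast M'(\f)} = \overline{M'(\f)}\ast\overline{M'(\e)} = M'(\f)\ast M'(\e) = t^{-\Lambda_{M'}(\e,\f)/2}M'(\e+\f),
\end{equation*}
while directly $\overline{M'(\e)\ast M'(\f)} = t^{-\Lambda_{M'}(\e,\f)/2}\overline{M'(\e+\f)}$, forcing $\overline{M'(\e+\f)} = M'(\e+\f)$. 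Iterating on the number of non-zero components of $\f$ yields bar-invariance of arbitrary $M'(\f)$.

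Now the exchange formula (\ref{qmut}) expresses the mutated variable $X_k'$ as a sum of two monomials of the form $M'(\f)$, so $X_k'$ is bar-invariant by the auxiliary claim, while $X_i' = X_i$ for $i\neq k$ is bar-invariant by the inductive hypothesis. Thus every cluster variable of $\A_t(\Gamma_N)$ (and hence of $\A_t(\Gamma)$, by taking $N$ large enough to contain the mutation sequence, as in the remark following the definition of $\A_t(\Gamma)$) is bar-invariant. The passage from $\T_t$ to $\Ktp$ is harmless since $\overline{[\omega_i]}=[\omega_i]$, so in particular $\overline{\chi_t\otimes 1} = \overline{\chi_t}\otimes 1 = \chi_t\otimes 1$.

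There is no serious obstacle to this argument; the only point deserving care is to verify that the bar involution of the paper (fixing the $z_{i,r}$, fixing the $[\omega_i]$, and inverting $t^{1/2}$) matches, on $\A_t(\Gamma)$, the bar involution considered in \citep{qCA}. This is immediate once one recognizes that the initial variables are fixed and the sign conventions in the $t^{1/2}$-exponents agree, which in turn follows from the skew-symmetry of $\Lambda$.
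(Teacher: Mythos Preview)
Your proposal is correct and follows essentially the same approach as the paper: the paper does not give a separate proof but simply observes, in the paragraph preceding the proposition, that the bar involution on $\T_t$ agrees with the one of \citep[Section~6]{qCA}, and then invokes the general fact that quantum cluster variables are bar-invariant. You make the same identification and, in addition, spell out the inductive argument that \citep{qCA} contains; this extra detail is fine and the argument is sound (note only that in your inductive step the two monomials summing to $X_k'$ are $M$-monomials for the \emph{pre}-mutation frame, which is exactly what your auxiliary claim covers via the inductive hypothesis).
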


		\subsection{Inclusion of quantum Grothendieck rings}\label{sectInclusion}
		
As stated earlier, one natural property we would want to be satisfied by the quantum Grothendieck ring $\Ktp$ is to include the already-existing quantum Grothendieck $K_t(\C_\Z)$ of the category $\C_\Z$.

Note that those rings are contained in quantum tori, which are included in one another by the injective morphism $\J$  from Proposition \ref{propinclusiontores}:

\begin{equation*}
\begin{tikzcd}[column sep=-4pt,row sep=20pt]
K_t(\C_\Z) \arrow[d,dashrightarrow] & \subset & \Y_t \arrow[d,hook,"\J"] \\
\Ktp & \subset & \T_t.
\end{tikzcd}
\end{equation*}

Thus it is natural to formulate the following Conjecture:
\begin{conj}\label{conj}
The injective morphism $\J$ restricts to an inclusion of the quantum Grothendieck rings
\begin{equation}
\J : K_t(\C_\Z)  \subset \Ktp.
\end{equation}
\end{conj}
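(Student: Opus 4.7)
The plan is to reduce Conjecture \ref{conj} to its refinement Conjecture \ref{conjchir}, which realizes each fundamental $\qt$-character $[L(Y_{i,q^r})]_t$ as a specific quantum cluster variable of $\mathcal{A}_t(\Gamma)$, obtained from the initial seed by the same finite mutation sequence that produces the classical cluster variable $[L(Y_{i,q^r})]$ in $K_0(\cO^+_\Z)$. Since the elements $[L(Y_{i,q^r})]_t$ generate $K_t(\C_\Z)$ as a $\Z[t^{\pm 1/2}]$-subalgebra of $\Y_t$ (via Hernandez's $t$-deformed Frenkel--Mukhin algorithm recalled in Section \ref{sectdefqGr}), this refinement immediately gives the desired inclusion through the morphism $\J$ of Proposition \ref{propinclusiontores}.

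Concretely, I would proceed in three steps. First, record the explicit finite mutation sequence $(\mu_{k_1},\ldots,\mu_{k_\ell})$ in $\A(\Gamma)$ producing $[L(Y_{i,q^r})]$ as a cluster variable, generalising Example \ref{excmut}. Second, apply the same sequence to the quantum seed $\mathcal{S}$; by Proposition \ref{propComp} the compatible-pair structure is preserved along the way, so the resulting element $\chi_t(i,r)$ is a well-defined quantum cluster variable of $\mathcal{A}_t(\Gamma) \subset \Ktp$, and by Corollary \ref{coreval} its specialization at $t=1$ recovers the classical $[L(Y_{i,q^r})]$. Third, identify $\chi_t(i,r)$ with $\J([L(Y_{i,q^r})]_t)$.

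The heart of the proof is Step three. Both $\chi_t(i,r)$ and $\J([L(Y_{i,q^r})]_t)$ are bar-invariant elements of $\T_t$ --- the former by Proposition \ref{propbarinv}, the latter by the defining property of the $\qt$-character --- and they share the same specialization at $t=1$. To promote this to an equality over $\Z[t^{\pm 1/2}]$, one must control the expansion of $\chi_t(i,r)$ in the commutative-monomial basis in the $Y_{i,q^{r+1}}^{\pm 1}$ (via the identification \eqref{Yzz}). In type $A$ this is feasible because the fundamental representations are \emph{thin}: their $\ell$-weight spaces are one-dimensional, so the classical $q$-character has $0/1$-coefficients. Combined with the positivity statement of Theorem \ref{theopos}, which forces every coefficient of $\chi_t(i,r)$ to be a polynomial in $t^{\pm 1/2}$ with non-negative coefficients of a single parity, and with bar-invariance, each coefficient collapses to a single power of $t^{\pm 1/2}$, uniquely determined by the specialization. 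This yields Theorem \ref{theoA}.

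The principal obstacle is to push this argument beyond type $A$. In types $D$ and $E$ some fundamental representations are no longer thin, so the classical $q$-character has coefficients larger than $1$, and the positivity/bar-invariance dichotomy no longer uniquely determines the lifts of those coefficients to $\N[t^{\pm 1/2}]$. An additional input --- for instance, a direct comparison of the mutation relations with Frenkel--Mukhin-type recursions at the $t$-deformed level, or a categorification result anchoring $\chi_t(i,r)$ to a geometric/algebraic model of $L(Y_{i,q^r})$ --- would be required to match the two expansions term by term. This is why the conjecture is proved only in type $A$ in what follows.
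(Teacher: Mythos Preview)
Your proposal is correct and follows essentially the same route as the paper: reduce Conjecture \ref{conj} to Conjecture \ref{conjchir} via the fact that the $[L(Y_{i,q^{r+1}})]_t$ generate $K_t(\C_\Z)$, then in type $A$ combine the three properties of Proposition \ref{propchi} (bar-invariance, positivity from Theorem \ref{theopos}, and specialisation at $t=1$) with the thinness of the fundamental modules to pin down each coefficient. One small sharpening: once thinness forces each nonzero coefficient $P_{\pmb u}(t^{1/2})$ to be a single monomial $t^{k/2}$, bar-invariance does not merely determine it ``up to the specialisation'' but forces $k=0$, so every coefficient is literally $1$; the paper then applies the same thinness-plus-bar-invariance argument to $[L(Y_{i,q^{r+1}})]_t$ itself to conclude that both sides equal the commutative-monomial lift of $\chi_q(L(Y_{i,q^{r+1}}))$.
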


Recall that the quantum Grothendieck ring $K_t(\C_\Z)$ is generated by the classes of the fundamental representations $[L(Y_{i,q^{r+1}})]_t$, for $(i,r) \in\hat{I}$ (see Section \ref{sectdefqGr}). Hence, in order to prove Conjecture \ref{conj}, it is enough to show that the images of these $[L(Y_{i,q^{r+1}})]_t$ belong to $\Ktp$.

In Example \ref{excmut} we saw how, when the $\g=\mathfrak{sl}_2$, the class of the fundamental representation $[L(Y_{1,q^{-1}})]$ could be obtained as a cluster variable in $\A(\Gamma)$ after one mutation in direction $(1,0)$. 

This fact is actually true in more generality, as seen in \citep{CABS}, in the proof of Proposition 6.1. Let us recall this process precisely.

Fix $(i,r)\in \hat{I}$. We first define a specific sequence of vertices in $\Gamma$, as in \citep{ACAA}. Recall the definition of the dual Coxeter number $h^{\vee}$.
\[
\begin{array}{|c|c|c|c|c|c|}
\hline
\g & A_n & D_n & E_6 & E_7 & E_8\\
\hline
h^\vee & n+1 & 2n-2 & 12 & 18 & 30\\
\hline
\end{array}
\]
Let $h'=\lceil h^\vee \rceil$. Fix an ordering $(j_1,\ldots,j_n)$ of the vertices of the Dynkin diagram of $\g$ by taking first $j_1=i$, then all vertices which appear with the same oddity as $i$ in $\hat{I}$ (the $j$ such that $(j,r)\in\hat{I}$), then the vertices which appear with a different oddity ($(j,r+1)\in\hat{I}$). For all $k \in \{2,\ldots,h'\}$, $j\in\{1,\ldots,n\}$, define the sequence $S_{j,k}$ of $k$ vertices of the column $j$ of $\Gamma$ in decreasing order: 
\begin{equation}
S_{j,k} = (j,r+2h'-\epsilon),(j,r+2h'-\epsilon-2),\ldots,(j,r+2h'-\epsilon-2k+2),
\end{equation}
where $\epsilon\in\{0,1\}$, depending of the oddity. Then define
\begin{equation}
S_k = \overrightarrow{\bigcup_{j}}S_{j,k},
\end{equation}
with the order defined before. Finally, let:
% For all $k \in \{1,\ldots,h'\}$, define the sequence of $nk$ vertices $S_k$:
%\begin{align*}
%S_k & = (j_1,r+2h'),(j_1,r+2h'-2),\ldots,(j_1,r+2h'-2k+1)\\
%	& (j_2,r+2h'),(j_2,r+2h'-2),\ldots,(j_2,r+2h'-2k+1) \\
%	& \cdots \\
%	& (j_r,r+2h'),(j_r,r+2h'-2),\ldots,(j_r,r+2h'-2k+1)\\
%	& (j_{r+1},r+2h'-1),(j_{r+1},r+2h'-3),\ldots,(j_{r+1},r+2h'-2k) \\
%	& \cdots \\
%	& (j_n,r+2h'-1),(j_n,r+2h'-3),\ldots,(j_n,r+2h'-2k).
%\end{align*}
%Notice that in $S_k$ we visit each column of $\Gamma$ once, with a sequence of $k$ vertices, starting at $(j,r+2h')$ (resp. $(j,r+2h'-1)$) and going down.
%Define 
\begin{equation*}
S = S_{h'} \cdots S_2  ~ (i,r+2h'),
\end{equation*}
by reading left to right and adding one last $(i,r+2h')$ at the end. 
\begin{ex}
For $\g$ of type $D_4$, and $(i,r)=(1,0)$, the sequence $S$ is 
\begin{align*}
S  = & (1,6) ~ (1,4) ~ (1,2) ~~ (3,6) ~ (3,4) ~ (3,2)\\
& (4,6) ~ (4,4) ~ (4,2) ~~ (2,5) ~ (2,3) ~ (2,1) \\ 
& (1,6) ~ (1,4) ~~ (3,6) ~ (3,4) ~~ (4,6) ~ (4,4)\\
& (2,5) ~ (2,3) ~~~ (1,6)
\end{align*}
\end{ex}

Using \citep[Theorem 3.1]{ACAA} and elements from the proof of Proposition 6.1 in \citep{CABS}, one gets the following result.
\begin{prop}\label{propchir}
Let $\chi_{i,r}$ be the cluster variable of $\A(\Gamma)$ obtained at the vertex $(i,r+2h')$ after following the sequence of mutations $S$, then, via the identification (\ref{identZL})
\begin{equation}
\chi_{i,r} \equiv [L(Y_{i,q^{r+1}})].
\end{equation}
To see this result differently, if one writes $\chi_{i,r}$ as a Laurent polynomial in the variables $(z_{j,s})$, then $\chi_{i,r}$ is in the image of $\J$, and 
\begin{equation}
\chi_{i,r}=\J(\chi_q(L(Y_{i,q^{r+1}})).
\end{equation}
\end{prop}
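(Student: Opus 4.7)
The plan is to combine the explicit mutation formula of \citep[Theorem 3.1]{ACAA}, which produces classes of fundamental representations as cluster variables in a related cluster algebra, with the identification $K_0(\cO^+_\Z) \simeq \mathcal{A}(\Gamma)\hat\otimes_\Z\E$ provided by Theorem \ref{theoHL}. First I would recall that under this isomorphism the initial seed of $\mathcal{A}(\Gamma)$ corresponds, up to the weight shifts $[(r/2)\omega_i]$, to the classes of the positive prefundamental representations $[L_{i,q^r}^+]$. The result of \citep{ACAA} then asserts that a certain finite mutation sequence — precisely the sequence $S$ constructed above out of the column blocks $S_{j,k}$ in an order dictated by the dual Coxeter number $h^\vee$ — produces, at vertex $(i,r+2h')$, a cluster variable whose underlying Laurent polynomial in the $Y_{j,q^{s+1}}$ variables (after the substitution $Y_{j,q^{s+1}} \equiv z_{j,s} z_{j,s+2}^{-1}$) is exactly the $q$-character of $L(Y_{i,q^{r+1}})$.

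Second, following the strategy of \citep[Proposition 6.1]{CABS}, I would lift this statement to the cluster algebra $\mathcal{A}(\Gamma)$ itself. Each mutation along $S$ carries an exchange relation which, via the identification of Theorem \ref{theoHL}, matches a known $T$-system (or Baxter-type) identity among classes of simples in $\cO^+_\Z$. A straightforward induction along the sequence $S$ then identifies every intermediate cluster variable with the class of a specific simple module in $\cO^+_\Z$, and identifies the terminal variable $\chi_{i,r}$ at vertex $(i,r+2h')$ with $[L(Y_{i,q^{r+1}})]$. This gives the first assertion.

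For the second assertion, I would use that $L(Y_{i,q^{r+1}})$ is finite-dimensional, so $\chi_q(L(Y_{i,q^{r+1}}))$ lies in $\mathcal{Y}\subset \hat{\mathcal{Y}}$, and the injective morphism $\J$ from Proposition \ref{propinclusiontores} sends it to a Laurent polynomial in the $z_{j,s}$ variables. By the Laurent phenomenon (Corollary \ref{propLaurentP}) applied to $\mathcal{A}(\Gamma)$, the cluster variable $\chi_{i,r}$ admits a unique expression as a Laurent polynomial in the initial cluster $(z_{j,s})_{(j,s)\in\hat I}$. Since both $\chi_{i,r}$ and $\J(\chi_q(L(Y_{i,q^{r+1}})))$ specialize, after multiplying by the appropriate weight element, to the same class $[L(Y_{i,q^{r+1}})] \in K_0(\cO^+_\Z)$ under the identification of Theorem \ref{theoHL}, and since that identification is an algebra isomorphism, the two Laurent expressions must coincide.

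The main obstacle is the inductive bookkeeping in the second step: one must verify that each exchange relation encountered along $S$ really is an instance of a known identity in $K_0(\cO^+_\Z)$, and that the simple module produced at each intermediate vertex is the expected one. This is essentially the content of the proof of \citep[Theorem 3.1]{ACAA}, transported to the Borel/category-$\cO$ setting via Theorem \ref{theoHL}; once this combinatorial/representation-theoretic matching of intermediate exchange relations is established, both assertions of the proposition follow formally.
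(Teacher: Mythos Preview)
Your proposal is correct and follows exactly the approach the paper indicates: the paper does not give a self-contained proof but simply states that the proposition follows from \citep[Theorem 3.1]{ACAA} together with elements of the proof of \citep[Proposition 6.1]{CABS}, and your outline is a faithful unpacking of how these two ingredients combine. Your observation that the second assertion is a formal consequence of the first via the isomorphism of Theorem \ref{theoHL} and the injectivity of $\J$ is also in line with the paper's phrasing (``To see this result differently'').
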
 

\begin{ex}
Let $\g=\mathfrak{sl}_3$ and $(i,r)=(1,0)$. The sequence of vertices $S$ is
\begin{equation}
S= (1,4) ~ (1,2) ~~ (2,3) ~ (2,1) ~~ (1,4).
\end{equation}
Let us compute the sequence of mutations $S$:

\begin{minipage}{4cm}
\begin{tikzpicture}
\node (18) at (0,2) {$\vdots$};
\node (16) at (0,0) {$(1,6)$};
\node (14) at (0,-2) {\boxed{$(1,4)$}};
\node (12) at (0,-4) {$(1,2)$};
\node (10) at (0,-6) {$(1,0)$};
\node (8) at (0,-8) {$\vdots$};

\node (17) at (2,1) {$\vdots$};
\node (15) at (2,-1) {$(2,5)$};
\node (13) at (2,-3) {$(2,3)$};
\node (11) at (2,-5) {$(2,1)$};
\node (9) at (2,-7) {$\vdots$};
%\node (7) at (2,-9) {$\vdots$};

\draw [->] (8) edge (10);
\draw [->] (10) edge (12);
\draw [->] (12) edge (14);
\draw [->] (14) edge (16);
\draw [->] (16) edge (18);

%\draw [->] (7) edge (9);
\draw [->] (9) edge (11);
\draw [->] (11) edge (13);
\draw [->] (13) edge (15);
\draw [->] (15) edge (17);

\draw [->] (16) edge (15);
\draw [->] (14) edge (13);
\draw [->] (12) edge (11);
\draw [->] (10) edge (9);

\draw [->] (15) edge (14);
\draw [->] (13) edge (12);
\draw [->] (11) edge (10);
\end{tikzpicture}
\end{minipage}
\begin{minipage}{4cm}
\begin{tikzpicture}
\node (18) at (0,2) {$\vdots$};
\node (16) at (0,0) {$(1,6)$};
\node (14) at (0,-2) {$(1,4)$};
\node (12) at (0,-4) {\boxed{$(1,2)$}};
\node (10) at (0,-6) {$(1,0)$};
\node (8) at (0,-8) {$\vdots$};

\node (17) at (2,1) {$\vdots$};
\node (15) at (2,-1) {$(2,5)$};
\node (13) at (2,-3) {$(2,3)$};
\node (11) at (2,-5) {$(2,1)$};
\node (9) at (2,-7) {$\vdots$};
%\node (7) at (2,-9) {$\vdots$};

\draw [->] (8) edge (10);
\draw [->] (10) edge (12);

\draw [->] (12) edge[bend left] (16);
\draw [->] (14) edge (12);
\draw [->] (16) edge (14);
\draw [->] (16) edge (18);

%\draw [->] (7) edge (9);
\draw [->] (9) edge (11);
\draw [->] (11) edge (13);
%\draw [->] (13) edge (15);
\draw [->] (15) edge (17);

%\draw [->] (16) edge (15);
\draw [->] (13) edge (14);
\draw [->] (12) edge (11);
\draw [->] (10) edge (9);

\draw [->] (14) edge (15);
%\draw [->] (13) edge (12);
\draw [->] (11) edge (10);
\end{tikzpicture}
\end{minipage}
\begin{minipage}{4cm}
\begin{tikzpicture}
\node (18) at (0,2) {$\vdots$};
\node (16) at (0,0) {$(1,6)$};
\node (14) at (0,-2) {$(1,4)$};
\node (12) at (0,-4) {$(1,2)$};
\node (10) at (0,-6) {$(1,0)$};
\node (8) at (0,-8) {$\vdots$};

\node (17) at (2,1) {$\vdots$};
\node (15) at (2,-1) {$(2,5)$};
\node (13) at (2,-3) {\boxed{$(2,3)$}};
\node (11) at (2,-5) {$(2,1)$};
\node (9) at (2,-7) {$\vdots$};
%\node (7) at (2,-9) {$\vdots$};

\draw [->] (8) edge (10);
\draw [->] (10) edge (12);

\draw [->] (10) edge[bend left] (16);
\draw [->] (16) edge[bend right] (12);
\draw [->] (12) edge (14);
%\draw [->] (16) edge (14);
\draw [->] (16) edge (18);

%\draw [->] (7) edge (9);
\draw [->] (9) edge (11);
\draw [->] (11) edge (13);
%\draw [->] (13) edge (15);
\draw [->] (15) edge (17);

%\draw [->] (16) edge (15);
\draw [->] (13) edge (14);
\draw [->] (11) edge (12);
\draw [->] (10) edge (9);

\draw [->] (14) edge (15);
%\draw [->] (13) edge (12);
%\draw [->] (11) edge (10);

\draw [->] (14) edge (11);
\end{tikzpicture}
\end{minipage}

\begin{minipage}{4.5cm}
\begin{tikzpicture}
\node (18) at (0,2) {$\vdots$};
\node (16) at (0,0) {$(1,6)$};
\node (14) at (0,-2) {$(1,4)$};
\node (12) at (0,-4) {$(1,2)$};
\node (10) at (0,-6) {$(1,0)$};
\node (8) at (0,-8) {$\vdots$};

\node (17) at (2,1) {$\vdots$};
\node (15) at (2,-1) {$(2,5)$};
\node (13) at (2,-3) {$(2,3)$};
\node (11) at (2,-5) {\boxed{$(2,1)$}};
\node (9) at (2,-7) {$\vdots$};
%\node (7) at (2,-9) {$\vdots$};

\draw [->] (8) edge (10);
\draw [->] (10) edge (12);

\draw [->] (10) edge[bend left] (16);
\draw [->] (16) edge[bend right] (12);
\draw [->] (12) edge (14);
%\draw [->] (16) edge (14);
\draw [->] (16) edge (18);

%\draw [->] (7) edge (9);
\draw [->] (9) edge (11);
\draw [->] (13) edge (11);
%\draw [->] (13) edge (15);
\draw [->] (15) edge (17);

%\draw [->] (16) edge (15);
\draw [->] (14) edge (13);
\draw [->] (11) edge (12);
\draw [->] (10) edge (9);

\draw [->] (14) edge (15);
%\draw [->] (13) edge (12);
%\draw [->] (11) edge (10);

%\draw [->] (14) edge (11);
\end{tikzpicture}
\end{minipage}
\begin{minipage}{4.5cm}
\begin{tikzpicture}
\node (18) at (0,2) {$\vdots$};
\node (16) at (0,0) {$(1,6)$};
\node (14) at (0,-2) {\boxed{$(1,4)$}};
\node (12) at (0,-4) {$(1,2)$};
\node (10) at (0,-6) {$(1,0)$};
\node (8) at (0,-8) {$\vdots$};

\node (17) at (2,1) {$\vdots$};
\node (15) at (2,-1) {$(2,5)$};
\node (13) at (2,-3) {$(2,3)$};
\node (11) at (2,-5) {$(2,1)$};
\node (9) at (2,-7) {$\vdots$};
%\node (7) at (2,-9) {$\vdots$};

\draw [->] (8) edge (10);
\draw [->] (10) edge (12);

\draw [->] (10) edge[bend left] (16);
\draw [->] (16) edge[bend right] (12);
\draw [->] (12) edge (14);
%\draw [->] (16) edge (14);
\draw [->] (16) edge (18);

%\draw [->] (7) edge (9);
\draw [->] (11) edge (9);
\draw [->] (11) edge (13);
%\draw [->] (13) edge (15);
\draw [->] (15) edge (17);

%\draw [->] (16) edge (15);
\draw [->] (14) edge (13);
\draw [->] (12) edge (11);
\draw [->] (10) edge (9);

\draw [->] (14) edge (15);
\draw [->] (13) edge (12);
%\draw [->] (11) edge (10);

%\draw [->] (14) edge (11);
\draw [->] (9) edge (12);
\end{tikzpicture}
\end{minipage}
\begin{minipage}{5cm}
\begin{tikzpicture}
\node (18) at (0,2) {$\vdots$};
\node (16) at (0,0) {$(1,6)$};
\node (14) at (0,-2) {$(1,4)$};
\node (12) at (0,-4) {$(1,2)$};
\node (10) at (0,-6) {$(1,0)$};
\node (8) at (0,-8) {$\vdots$};

\node (17) at (2,1) {$\vdots$};
\node (15) at (2,-1) {$(2,5)$};
\node (13) at (2,-3) {$(2,3)$};
\node (11) at (2,-5) {$(2,1)$};
\node (9) at (2,-7) {$\vdots$};
%\node (7) at (2,-9) {$\vdots$};

\draw [->] (8) edge (10);
\draw [->] (10) edge (12);

\draw [->] (10) edge[bend left] (16);
\draw [->] (16) edge[bend right] (12);
\draw [->] (14) edge (12);
%\draw [->] (16) edge (14);
\draw [->] (16) edge (18);

%\draw [->] (7) edge (9);
\draw [->] (11) edge (9);
\draw [->] (11) edge (13);
%\draw [->] (13) edge (15);
\draw [->] (15) edge (17);

%\draw [->] (16) edge (15);
\draw [->] (13) edge (14);
\draw [->] (12) edge (11);
\draw [->] (10) edge (9);

\draw [->] (15) edge (14);
%\draw [->] (13) edge (12);
%\draw [->] (11) edge (10);

%\draw [->] (14) edge (11);
\draw [->] (9) edge (12);
\draw [->] (12) edge[bend left, out=90, in=100]  (15);
\end{tikzpicture}
\end{minipage}

The associated cluster variables are:
\begin{align*}
z_{1,4}^{(1)}& = z_{1,2}z_{1,4}^{-1}z_{2,5} + z_{1,4}^{-1}z_{1,6}z_{2,3} , \\
z_{1,2}^{(1)} & = z_{1,0}z_{1,4}^{-1}z_{2,5} + z_{1,0}z_{1,2}^{-1}z_{1,4}^{-1}z_{1,6}z_{2,3} +  z_{1,2}^{-1} z_{1,6} z_{2,-1}, \\
z_{2,3}^{(1)} & = z_{2,1}z_{2,3}^{-1} + z_{1,2}z_{1,4}^{-1}z_{2,5}z_{2,3}^{-1} + z_{1,4}^{-1}z_{1,6} , \\
z_{1,4}^{(2)} & = z_{1,0}z_{1,2}^{-1} + z_{1,2}^{-1}z_{1,4}z_{2,1}z_{2,3}^{-1} + z_{2,3}^{-1}z_{2,5}.
\end{align*}
Thus, $\chi_{1,0}=z_{1,4}^{(2)}$ is in the image of $\J$, and 
\begin{equation}
\chi_{1,0}= \J(Y_{1,q} + Y_{1,q^3}^{-1}Y_{2,q^2} + Y_{2,q^4}^{-1}) = \J(\chi_q(L(Y_{1,q}))).
\end{equation}
Notice also that $z_{2,3}^{(1)}$ was already in the image of $\J$ and that $z_{2,3}^{(1)} = \J(\chi_q(L(Y_{2,q^2})))$.
\end{ex}
%\begin{proof}
%As in \citep{CABS}, consider the subquiver $G^-$ of $\Gamma$ with vertex set $\hat{I}^-=\{ (j,s) \mid s\leq r+2h'\}$, let us denote by $u_{j,s}$ the cluster variables of the initial seed of the cluster algebra $\A(G^-)$. 
%\end{proof}

%In more generality, in the proof of Proposition 6.1 in \citep{CABS} it is shown that the $q$-characters of all fundamental representations (expressed in the variables $z_{i,r}$) are cluster variable in the cluster algebra $\A(\Gamma)$ (the $q$-character morphism being injective, we identify the $q$-characters to the classes of the corresponding modules in the Grothendieck ring).

Thus, for each $(i,r) \in\hat{I}$, consider the quantum cluster variables $\tilde{\chi}_{i,r}\in \Ktp$ obtained from the initial quantum seed $(\pmb z, \Lambda)$ via the sequence of mutations $S$.

\begin{ex}\label{exmutsl2}
Suppose $\mathfrak{g}=\mathfrak{sl}_2$. Consider the quiver $\Gamma_1$ a well as the skew-symmetric matrix $\Lambda_1$,
\[
\Gamma_1 = \vcenter{\vbox{\xymatrix@R=0.5cm{
\boxed{(1,2)} \\
(1,0) \ar[u]\\
\boxed{(1,-2)} \ar[u]
}}}, \quad \Lambda_1 = \left( \begin{array}{ccc}
				0 & -1 & 0 \\
				1 & 0 & -1 \\
				0 & 1 & 0
\end{array}\right).
\]
As seen in Example \ref{excmut} (with a shift of quantum parameters), the fundamental representation $[L(Y_{1,q^{-1}})]$ is obtained in $K_0(\cO^+_Z)$ after one mutation at $(1,0)$ (here $S=(1,0)$).

%In this case, the $q$-character of the fundamental representation $L(Y_{1,q^{-1}})$ is 
%\begin{equation*}
%\chi_q(L(Y_{1,q^{-1}})) = Y_{1,q^{-1}} + Y_{1,q}^{-1}.
%\end{equation*}
The quantum cluster variable obtained after a quantum mutation at $(1,0)$, written with commutative monomials, is
\begin{align*}
\tilde{\chi}_{1,-2} & = z_{1,-2}z_{1,0}^{-1} +z_{1,2}z_{1,0}^{-1}  =  \J(Y_{1,q^{-1}}+Y_{1,q}^{-1}) = \J([L(Y_{1,q^{-1}})]_t),\\
 &=\J\left( Y_{1,q^{-1}}(1 + A_{1,1}^{-1}) \right) \quad \in \J(K_t(\C_\Z)),
\end{align*}
Thus, we note that in this particular case, the quantum cluster variable $\tilde{\chi}_{1,-2}$ recovers the $\qt$-character $[L(Y_{1,q^{-1}})]_t$ of the fundamental representation $L(Y_{1,q^{-1}})$.

In particular, Conjecture \ref{conj} is satisfied in this case.
\end{ex}

This example incites us to formulate another conjecture.
\begin{conj}\label{conjchir}
For all $(i,r)\in\hat{I}$, the quantum cluster variable $\tilde{\chi}_{i,r}$ recovers, via the morphism $\J$, the $\qt$-character of the fundamental representation $L(Y_{i,q^{r+1}})$:
\begin{equation}
\tilde{\chi}_{i,r} = \J\left( [L(Y_{i,q^{r+1}})]_t\right).
\end{equation}
\end{conj}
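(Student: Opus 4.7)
The plan is to prove the identity in type $A$ by comparing both sides of the claimed equality as $\Z[t^{\pm 1/2}]$-linear combinations on the basis of commutative monomials of $\T_t$, using three ingredients: the specialization at $t=1$ (Corollary \ref{coreval}), the positivity of quantum cluster variables (Theorem \ref{theopos}), and the bar-invariance of cluster variables (Proposition \ref{propbarinv}).

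First, I would set up the classical comparison. By Proposition \ref{propchir}, the classical cluster variable $\chi_{i,r}$ obtained from the mutation sequence $S$ lies in the image of $\J$ and equals $\J(\chi_q(L(Y_{i,q^{r+1}})))$ under the identification (\ref{identZL}). By Corollary \ref{coreval}, the evaluation morphism $\pi$ sends $\tilde{\chi}_{i,r}$ to $\chi_{i,r}$, so
\begin{equation*}
\pi(\tilde{\chi}_{i,r}) = \J(\chi_q(L(Y_{i,q^{r+1}}))).
\end{equation*}
At this point, the type $A$ hypothesis enters decisively: each fundamental representation $L(Y_{i,q^{r+1}})$ is \emph{thin}, so its $q$-character is a sum of pairwise distinct Laurent monomials in the $Y_{j,q^s}$, each with coefficient $1$. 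Since $\J$ is injective and sends distinct commutative monomials of $\Y_t$ to distinct commutative monomials of $\T_t$, the right-hand side above is a sum over a distinguished finite set $\mathcal{C}_{i,r}\subset \Z^{(\hat{I})}$ of exponent vectors, each appearing with coefficient $1$.

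Next, I would expand $\tilde{\chi}_{i,r}$ on the commutative monomial basis:
\begin{equation*}
\tilde{\chi}_{i,r} = \sum_{c\in \Z^{(\hat{I})}} a_c(t^{1/2})\, M(c), \qquad a_c(t^{1/2})\in \Z[t^{\pm 1/2}].
\end{equation*}
Applying $\pi$ and comparing with the previous paragraph shows that $a_c(1) = 1$ for $c\in \mathcal{C}_{i,r}$ and $a_c(1) = 0$ otherwise. Theorem \ref{theopos} guarantees that each $a_c(t^{1/2})$ is of the form $t^{-\deg(b_c)/2}b_c(t)$ with $b_c(t)\in \N[t]$; combined with the specialization values just obtained, this forces $a_c(t^{1/2}) = 0$ for $c\notin \mathcal{C}_{i,r}$ and $a_c(t^{1/2}) = t^{k_c/2}$ for a single integer $k_c$ when $c\in \mathcal{C}_{i,r}$. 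Finally, Proposition \ref{propbarinv} asserts that $\tilde{\chi}_{i,r}$ is bar-invariant. A direct calculation from the definition of $M(c)$ shows that each commutative monomial is itself bar-invariant, so each nonzero $a_c(t^{1/2})$ must also be bar-invariant, forcing $k_c = 0$. Thus $\tilde{\chi}_{i,r} = \sum_{c\in \mathcal{C}_{i,r}} M(c)$.

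To conclude, one applies the same three-step reasoning on the $\Y_t$ side: by the defining properties of $[L(Y_{i,q^{r+1}})]_t$ recalled in Section \ref{qtdf}, together with Nakajima's positivity (coefficients in $\N[t^{\pm 1}]$), the specialization $\chi_q(L(Y_{i,q^{r+1}}))$ being thin, and bar-invariance, one gets that $[L(Y_{i,q^{r+1}})]_t$ is the sum, over the monomials of $\chi_q(L(Y_{i,q^{r+1}}))$, of the corresponding commutative monomials of $\Y_t$, each with coefficient $1$. It then remains to show that $\J$ sends this sum of commutative monomials of $\Y_t$ to $\sum_{c\in \mathcal{C}_{i,r}} M(c)$. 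The main obstacle I anticipate lies precisely here: verifying that $\J$ respects the commutative monomial normalization, i.e.\ that the $t$-shift $\alpha(m)$ in Section \ref{qtdf} is compatible with the $t$-shift in the definition of the commutative monomials of $\T_t$. This reduces, via the key identity (\ref{eqFN}) already used in the proof of Proposition \ref{propinclusiontores}, to a bookkeeping computation of quadratic forms in the exponent vectors; while routine, it is the one place where the $\F$ versus $\mathcal{N}$ compatibility must be re-examined at the level of the entire commutative monomial basis rather than the generators alone.
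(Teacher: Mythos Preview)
Your proposal is correct and follows essentially the same route as the paper's proof of Theorem \ref{theoA}: expand $\tilde{\chi}_{i,r}$ on the commutative-monomial basis, use positivity plus the multiplicity-free $q$-character in type $A$ to force each nonzero coefficient to be a single power $t^{k_c/2}$, then use bar-invariance of quantum cluster variables and of commutative monomials to conclude $k_c=0$; the same argument applied on the $\Y_t$ side (using Nakajima's positivity and bar-invariance of $[L(Y_{i,q^{r+1}})]_t$) shows that the $\qt$-character is also a coefficient-$1$ sum of commutative monomials, and equality follows by matching specializations.

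The only point where you are more cautious than the paper is your final paragraph about whether $\J$ carries commutative monomials to commutative monomials. This is not a genuine obstacle: since $\J$ is an algebra morphism with $\J(t^{1/2})=t^{1/2}$, sends each generator $Y_{i,q^{r+1}}$ to a bar-invariant element, and the bar-involution is an anti-automorphism on both sides, $\J$ intertwines the two bar-involutions; hence $\J$ of any bar-invariant element is bar-invariant, and in particular $\J$ of a commutative monomial is a bar-invariant monomial in $\T_t$, i.e.\ again a commutative monomial. The paper leaves this implicit, so your proof is in fact slightly more careful at this step.
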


\begin{rem}
Notice that Conjecture \ref{conjchir} implies Conjecture \ref{conj}, and that Conjecture \ref{conjchir} is also satisfied when $\mathfrak{g}=\mathfrak{sl}_2$, from Example \ref{exmutsl2}.
\end{rem}

What can be said, in general, of the quantum cluster variables $\tilde{\chi}_{i,r}$ ?

\begin{prop}\label{propchi}
For all $(i,r)\in\hat{I}$, the quantum cluster variable $\tilde{\chi}_{i,r}$ satisfies the following properties:
\begin{enumerate}[(i)]
	\item invariant under the bar involution:
\begin{equation}
\overline{\tilde{\chi}_{i,r}} =\tilde{\chi}_{i,r}.
\end{equation}
	\item the coefficients of its expansion as a Laurent polynomial in the initial quantum cluster variables $\{z_{i,r}\}$ are Laurent polynomials in $t^{1/2}$ with non-negative integers coefficients:
\begin{equation}
\tilde{\chi}_{i,r} \in \bigoplus_{\pmb u =u_{i,r}\in\Z^{(\hat{I})}}\N[t^{\pm 1/2}]\pmb z^{\pmb u} .
\end{equation}
with $\pmb z^{\pmb u}= \prod_{(i,r)\in\hat{I}}z_{i,r}^{u_{i,r}}$ denoting the commutative monomial.
	\item its evaluation at $t=1$ (as seen in (\ref{eval})), recovers the $q$-character of the fundamental representation $L(Y_{i,q^{r+1}})$:
\begin{equation}
\pi(\tilde{\chi}_{i,r}) = \chi_q(L(Y_{i,q^{r+1}})).
\end{equation}
\end{enumerate}
\end{prop}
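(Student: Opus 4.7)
The three properties will follow from general results on quantum cluster algebras stated earlier in the paper, combined with the classical counterpart already established for $K_0(\cO^+_\Z)$.

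For part (i), the quantum cluster variable $\tilde{\chi}_{i,r}$ is obtained from the initial quantum seed $\mathcal{S}$ by a finite sequence of quantum mutations. The plan is to invoke Proposition \ref{propbarinv}: the bar involution on $\T_t$ was constructed to be compatible with the standard bar involution on the quantum torus of a quantum cluster algebra, under which all cluster variables are invariant. Since $\tilde{\chi}_{i,r} \otimes 1$ is a cluster variable of $\A_t(\Gamma)$ (or of some $\A_t(\Gamma_N)$ for $N$ large enough), bar-invariance is immediate.

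For part (ii), the strategy is to apply Davison's positivity theorem (Theorem \ref{theopos}) directly. Working in a finite-rank quantum cluster algebra $\A_t(\Gamma_N)$ containing the entire mutation sequence $S$ (which exists since $S$ is finite), $\tilde{\chi}_{i,r}$ is a quantum cluster variable, hence a quantum cluster monomial. The initial quantum seed $\mathcal{S}_N$ is another toric frame, so Theorem \ref{theopos} immediately yields that the Laurent expansion of $\tilde{\chi}_{i,r}$ in the initial variables $\{z_{j,s}\}$ has coefficients in $\N[t^{\pm 1/2}]$. The expression in terms of commutative monomials $\pmb z^{\pmb u}$ differs only by global powers of $t^{1/2}$, which preserves non-negativity of the coefficients.

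For part (iii), the plan is to show that the evaluation morphism $\pi: \T_t \to \E_\ell$ restricted to $\Ktp$ factors through the classical specialization morphism of Section \ref{sectspec}, so that Corollary \ref{coreval} applies. Concretely, $\pi$ sends $z_{i,r}$ to $[\tfrac{-r}{2}\omega_i][\pmb\Psi_{i,q^r}]$, which under the identification (\ref{identZL}) corresponds exactly to the classical cluster variable $z_{i,r}$ of $\A(\Gamma)\hat{\otimes}_\Z \E \simeq K_0(\cO^+_\Z)$ attached to the prefundamental class $[L^+_{i,q^r}]$. Since the quivers underlying $\A_t(\Gamma)$ and $\A(\Gamma)$ coincide, and quantum and classical mutations act identically on the exchange matrix (Remark \ref{remcorresp}), Corollary \ref{coreval} yields $\pi(\tilde{\chi}_{i,r}) = \chi_{i,r}$, the classical cluster variable obtained from the initial seed by the sequence $S$. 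Proposition \ref{propchir} identifies $\chi_{i,r}$ with $\J(\chi_q(L(Y_{i,q^{r+1}})))$, so applying the classical $q$-character interpretation gives $\pi(\tilde{\chi}_{i,r}) = \chi_q(L(Y_{i,q^{r+1}}))$ as desired.

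The one point requiring a little care is the last step: the evaluation $\pi$ is defined on the ambient quantum torus $\T_t$ rather than abstractly on the quantum cluster algebra, so one must check that, restricted to $\Ktp$, it really coincides with the specialization morphism of Geiss-Leclerc-Schröer applied to $\A_t(\Gamma_N)$, up to the identification of initial variables. This is essentially bookkeeping, comparing the definitions of $\pi$ and (\ref{eval1}); I expect no genuine obstruction, only the need to keep the two identifications (the quantum-classical specialization and the Hernandez-Leclerc identification (\ref{identZL})) consistent throughout.
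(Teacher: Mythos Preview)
Your proposal is correct and follows essentially the same approach as the paper's proof: parts (i) and (ii) are dispatched via Proposition \ref{propbarinv} and Theorem \ref{theopos} respectively, and part (iii) is obtained by identifying the evaluation $\pi$ on $\T_t$ with the specialization map of Section \ref{sectspec}, then applying Corollary \ref{coreval} together with Proposition \ref{propchir}. The ``bookkeeping'' point you flag about reconciling the two uses of $\pi$ is exactly the subtlety the paper addresses explicitly.
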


\begin{proof}
The first property is a direct consequence of Proposition \ref{propbarinv} and the second is a direct consequence of the positivity result of Theorem \ref{theopos}.

For the third property, notice we have used two evaluation maps so far, with the same notation.
\begin{itemize}
	\item[•] The evaluation map defined in (\ref{eval1}) on the bases quantum torus of a quantum cluster algebra:
	\begin{equation*}
\pi :  \mathcal{A}_t(M,\tilde{B}) \to  \Z[\tilde{\textbf{X}}^{\pm 1}],
	\end{equation*}
	\item[•] The evaluation map defined in (\ref{eval}) on $\T_t$:
	\begin{equation*}
\pi :  \T_t  \to  \E_\ell.
	\end{equation*}
\end{itemize}
These notations are coherent because the map $\pi$ from (\ref{eval}) is the evaluation map defined on a based quantum torus (of infinite rank) of a quantum cluster algebra, extended to a $\E$-morphism on $\T_t$. In this case, the Laurent polynomial ring $\Z[\tilde{\textbf{X}}^{\pm 1}]$ is $\Z[z_{i,r}^{\pm 1}\mid (i,r)\in\hat{I}]$, which becomes $\E[\pmb\Psi_{i,r}^{\pm 1}]$ after extension to a $\E$-morphism and via the identification (\ref{identZL}).

  Thus we can apply Corollary \ref{coreval} to this map $\pi$. As $\tilde{\chi}_{i,r}$ is a quantum cluster variable, its evaluation by $\pi$ is the cluster variable $\chi_{i,r}$, which is obtained from the initial seed $\textbf{z}$, via the same sequence of mutations $S$ (the initial seed and quantum seeds are fixed and identified by the evaluation $\pi$ on the quantum torus $\T_t$). By Proposition \ref{propchir},
\begin{equation}
\pi(\tilde{\chi}_{i,r}) = \chi_{i,r} = \chi_q(L(Y_{i,q^{r+1}})).
\end{equation} 
\end{proof}
		
These two properties imply that the $\tilde{\chi}_{i,r}$ are good candidates for the $\qt$-characters of the fundamental representations, as stated in Conjecture \ref{conj}.
		
		\subsection{$\qt$-characters for positive prefundamental representations}
	
Recall the $q$-characters of the positive prefundamental representations in (\ref{chiqL}), for all $i\in I, a\in\mathbb{C}^\times$,
\begin{equation*}
\chi_q(L_{i,a}^+)= [\pmb\Psi_{i,a}]\chi_i,
\end{equation*}
where $\chi_i\in \E$ is the (classical) character of $L_{i,a}^+$.

\begin{defi}
For $(i,r)\in\hat{I}$, define
\begin{equation}\label{defqtL}
[L_{i,q^r}^+]_t := [\pmb\Psi_{i,q^r}]\otimes \chi_i \quad \in \Ktp,
\end{equation}
using the notation from (\ref{Psizz}).
\end{defi}

\begin{rem} It is the quantum cluster variable obtained from the initial quantum seed, via the same sequence of mutations used to obtain $[L_{i,q^r}^+]$ in $K_0(\cO^+_\Z)$, which in this case, is no mutation at all.
\end{rem}

In particular, the evaluation of $[L_{i,q^r}^+]_t$ recovers the $q$-character of $[L_{i,q^r}^+]$:
\begin{equation}
\pi([L_{i,q^r}^+]_t) = [\pmb\Psi_{i,q^r}]\otimes \chi_i = \chi_q(L_{i,a}^+) \quad \in \E_\ell.
\end{equation}

	\section{Results in type A}\label{sectA}

Suppose in this section that the underlying simple Lie algebra $\g$ is of type $A$. 

	\subsection{Proof of the conjectures}
In this case, the situation of Example \ref{exmutsl2} generalizes.

\begin{theo}\label{theoA}
Conjecture \ref{conjchir} is satisfied in this case.
\end{theo}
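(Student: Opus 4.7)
The plan is to exploit the \emph{thinness} of fundamental representations in type $A$: every $\ell$-weight space of $L(Y_{i,q^{r+1}})$ is one-dimensional, equivalently $\chi_q(L(Y_{i,q^{r+1}}))$ is a multiplicity-free sum of distinct commutative monomials in the $Y_{j,q^{s+1}}^{\pm 1}$ (a classical consequence of Frenkel--Mukhin's algorithm in type $A$). Combined with Proposition~\ref{propchi}, which supplies bar-invariance, positivity, and the identity $\pi(\tilde{\chi}_{i,r}) = \chi_q(L(Y_{i,q^{r+1}}))$, this will pin down $\tilde{\chi}_{i,r}$ uniquely.

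Expand $\tilde{\chi}_{i,r} = \sum_m c_m(t^{1/2})\,m$ on the basis of commutative $z$-monomials. The assignment $m\mapsto \pi(m)$ is injective into $\mathcal{E}_\ell$ since the $[\pmb\Psi_{j,q^s}]$ are multiplicatively independent in $P_\ell$, so the specialization condition yields $c_m(1) = \dim L(Y_{i,q^{r+1}})_{\pi(m)}$. Thinness forces $c_m(1) \in\{0,1\}$, and a bar-invariant polynomial $c_m \in \N[t^{\pm 1/2}]$ with $c_m(1) \leq 1$ must itself equal $0$ or $1$. Therefore $\tilde{\chi}_{i,r}$ is the sum, with all coefficients equal to $1$, of the commutative $z$-monomials $m$ whose $\pi$-image lies in the set of $\ell$-weights of $L(Y_{i,q^{r+1}})$.

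Applying the same argument inside $\Y_t$ (using Nakajima's positivity together with thinness) gives $[L(Y_{i,q^{r+1}})]_t = \sum_M M$, the sum being indexed by the commutative $Y$-monomials of $\chi_q(L(Y_{i,q^{r+1}}))$. Since $\J$ is a bar-equivariant algebra embedding sending the commutative monomial $Y_{i,q^{r+1}}$ to the commutative monomial $z_{i,r}z_{i,r+2}^{-1}$, it transports commutative $Y$-monomials to commutative $z$-monomials, so $\J([L(Y_{i,q^{r+1}})]_t) = \sum_M \J(M)$. To conclude I would check that $M \mapsto \J(M)$ is a bijection between the commutative $Y$-monomials appearing in $\chi_q(L(Y_{i,q^{r+1}}))$ and the commutative $z$-monomials whose $\pi$-image is an $\ell$-weight of $L(Y_{i,q^{r+1}})$. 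Concretely, writing $M = \prod Y_{j,q^{s+1}}^{u_{j,s}}$ and $m = \prod z_{j,s}^{v_{j,s}}$, this reduces to the telescoping relation $v_{j,s}= u_{j,s} - u_{j,s-2}$, and the compatibility of the weight-shift factors is automatic from the parametrization of finite-dimensional simple $\Uqb$-modules in Proposition~\ref{propdimfinie}.

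The main obstacle is this last bijection. The torus $\T_t$ is strictly larger than the image $\J(\Y_t)$, so a priori $\tilde{\chi}_{i,r}$ could involve commutative $z$-monomials outside the image of $\J$; it is the combination of thinness with the rigidity of bar-invariant $\N[t^{\pm 1/2}]$ coefficients that rules this out and yields the equality $\tilde{\chi}_{i,r} = \J([L(Y_{i,q^{r+1}})]_t)$.
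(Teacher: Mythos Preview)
Your argument is correct and follows essentially the same route as the paper: expand $\tilde{\chi}_{i,r}$ on commutative monomials, use positivity and bar-invariance from Proposition~\ref{propchi} together with thinness to force every nonzero coefficient to equal $1$, and then run the identical argument inside $\Y_t$ to identify the result with $\J([L(Y_{i,q^{r+1}})]_t)$. You are in fact more careful than the paper about one point it glosses over, namely the bijection between the commutative $z$-monomials appearing in $\tilde{\chi}_{i,r}$ and the commutative $Y$-monomials in $\chi_q(L(Y_{i,q^{r+1}}))$; your telescoping observation together with the injectivity of $\pi$ on monomials is exactly what is needed.
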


In this case, the key ingredient is the following well-known result (see for exemple \citep[Section 11]{QAAVW}, and references therein).
\begin{theo}\label{theothinA}
When $\g$ is of type $A$, all $\ell$-weight spaces of all fundamental representations $L(Y_{i,a})$ are of dimension 1.
\end{theo}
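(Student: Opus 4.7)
The plan is to exploit the fact that in type $A_n$, every fundamental representation $V_{i,a}=L(Y_{i,a})$ of $\Uqg$ is minuscule: it can be realized, via the evaluation homomorphism $\mathrm{ev}_a\colon\Uqg\to U_q(\mathfrak{sl}_{n+1})$, as the classical $i$-th exterior power $\Lambda^{i}\mathbb{C}^{n+1}$ of the vector representation. The proof will proceed by exhibiting an explicit $\ell$-weight basis and then checking that the corresponding $\ell$-weights are pairwise distinct.

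First, I would fix the standard basis $\{e_{j_1}\wedge\cdots\wedge e_{j_i} \mid 1\leq j_1<\cdots<j_i\leq n+1\}$ of $\Lambda^{i}\mathbb{C}^{n+1}$; this gives a weight basis of $V_{i,a}$ of cardinality $\binom{n+1}{i}=\dim V_{i,a}$, so each classical weight space is already one-dimensional. Next, I would compute the action of the Drinfeld $\ell$-Cartan generators $\phi_{j,m}^{+}$ (equivalently, the generating series $\phi_{j}^{+}(z)$) on each wedge product, using the explicit formulas for the action of the Drinfeld generators on the vector representation and the compatibility of evaluation with the coproduct. The point is that on the vector $e_{j_1}\wedge\cdots\wedge e_{j_i}$ the series $\phi_{j}^{+}(z)$ acts as a scalar, which can be written out in closed form in terms of the indices $j_1,\dots,j_i$, the parameter $a$, and $q$.

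The second step is to record these eigenvalues as an $\ell$-weight $\pmb\Psi^{(J)}$ attached to the index set $J=\{j_1,\dots,j_i\}$, equivalently as a monomial in the variables $Y_{j,aq^{r}}^{\pm 1}$; this recovers the classical tableau formula for $\chi_q(V_{i,a})$ as a sum over $i$-subsets of $\{1,\dots,n+1\}$. I would then verify that the map $J\mapsto \pmb\Psi^{(J)}$ is injective: this is a purely combinatorial check on the explicit monomials, which can be done by reading off from $\pmb\Psi^{(J)}$ the unique increasing sequence of spectral parameters associated to $J$ (for instance, by looking at the support in the $Y_{j,aq^{r}}$-variables, which encodes the shifted content of the corresponding column tableau).

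Combining the three facts — the basis has size $\binom{n+1}{i}$, each basis vector lies in the $\ell$-weight space of some $\pmb\Psi^{(J)}$, and the $\pmb\Psi^{(J)}$ are pairwise distinct — forces each $\ell$-weight space $V_{i,a,\pmb\Psi}$ to be exactly one-dimensional, proving thinness. The only non-mechanical step is the explicit computation of the eigenvalues of $\phi_{j}^{+}(z)$ on wedge products; this is standard but does require care with the coproduct of $\phi_{j}^{+}(z)$, and is the place where the minuscule nature of $V_{i,a}$ in type $A$ (and hence the failure of this argument outside type $A$) enters decisively.
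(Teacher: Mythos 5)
The paper does not actually prove Theorem \ref{theothinA}: it is quoted as a well-known fact with a pointer to the literature (``see for example [Section 11]{QAAVW} and references therein''), and the proof environment that follows it in the source is the proof of Theorem \ref{theoA}, which merely \emph{uses} thinness. So there is nothing in the paper to compare against line by line; what matters is whether your argument is a valid proof of the cited fact, and it is — indeed it is essentially the standard one. Your key input, that in type $A_n$ the fundamental module $L(Y_{i,a})$ is obtained by pulling back the minuscule module $\Lambda^i\mathbb{C}^{n+1}$ along the evaluation homomorphism (which exists only in type $A$), is correct and is exactly where the type-$A$ hypothesis enters. However, your plan is longer than it needs to be: once you know every \emph{classical} weight space of $L(Y_{i,a})$ is one-dimensional, you are done, because the $\ell$-weight decomposition refines the weight decomposition — the paper records precisely that $V_{\pmb\Psi}\subset V_{\varpi(\pmb\Psi)}$ — so each nonzero $\ell$-weight space sits inside a one-dimensional weight space and is therefore itself one-dimensional. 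This makes your steps of explicitly computing the eigenvalues of $\phi_j^+(z)$ on wedge vectors and checking pairwise distinctness of the resulting monomials $\pmb\Psi^{(J)}$ logically superfluous (distinctness is automatic: two equal $\ell$-weights would have equal images under $\varpi$, contradicting one-dimensionality of weight spaces), and it spares you the only delicate computation you flag, namely the coproduct bookkeeping for $\phi_j^+(z)$ on exterior powers. Those computations are still worth knowing — they produce the tableau formula for $\chi_q(V_{i,a})$ — but they are not needed for thinness.
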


\begin{proof}
Fix $(i,r)\in\hat{I}$. From the second property of Proposition \ref{propchi}, we know that $\tilde{\chi}_{i,r}$ can be written as
\begin{equation}\label{eqchi}
\tilde{\chi}_{i,r}= \sum_{\pmb u\in\Z^{(\hat{I})}} P_{\pmb u}(t^{1/2}) \pmb z^{\pmb u},
\end{equation}
where the $P_{\pmb u}(t^{1/2})$ are Laurent polynomials with non-negative integer coefficients. Using the third property of Proposition \ref{propchi}, we deduce the evaluation at $t=1$ of equality (\ref{eqchi}):
\begin{equation}
\chi_q(L(Y_{i,q^{r-1}})) = \sum_{\pmb u\in\Z^{(\hat{I})}} P_{\pmb u}(1) \prod_{(i,r)\in\hat{I}}\left([\pmb\Psi_{i,q^r}][-r\omega_i/2]\right)^{u_{i,r}} \quad \in \E_\ell.
\end{equation}
From the above theorem, this decomposition is multiplicity-free. Thus, the non-zero coefficients $P_{\pmb u}(t^{1/2})$ are of the form $t^{k/2}$, with $k\in\Z$. Finally, as $\chi_{i,r}$ is bar-invariant, from the first property of Proposition \ref{propchi}, and the $\pmb z^{\pmb u}$ are also bar-invariant as commutative monomials, we know that the Laurent polynomials $P_{\pmb u}(t^{1/2})$ are even functions:
\begin{equation}
P_{\pmb u}(-t^{1/2})=P_{\pmb u}(t^{1/2}).
\end{equation}
Thus the variable $t^{1/2}$ does not explicitly appear in the decomposition (\ref{eqchi}), and so:
\begin{align*}
\chi_{i,r} & = \sum_{\pmb u\in\Z^{(\hat{I})}} P_{\pmb u}(1) \pmb z^{\pmb u},\\
	& = \J \left(\chi_q(L(Y_{i,q^{r-1}}))\right).
\end{align*}
Moreover, with the same arguments, as $[L(Y_{i,q^{r-1}})]_t$ is bar-invariant by definition,
\begin{equation}
[L(Y_{i,q^{r-1}})]_t = \chi_q(L(Y_{i,q^{r-1}})),
\end{equation}
written in the basis of the commutative monomials.

Hence we recover the fact that the quantum cluster variable $\chi_{i,r}$ is equal, via the inclusion map $\J$, to the $\qt$-character of $L(Y_{i,q^{r-1}})$ and Conjecture \ref{conjchir} is satisfied.
\end{proof}

		\subsection{An remarkable subalgebra in type $A_1$}

When $\g=\mathfrak{sl}_2$, we can make explicit computations. Recall the formula (\ref{sl2tcom}) of the quantum torus from Example \ref{exsl2-3}:

\begin{equation*}
z_{1,2r}\ast z_{1,2s} = t^{f(s-r)}z_{1,2s}\ast z_{1,2r}~,\forall r,s\in\Z,
\end{equation*}
where $f:\mathbb{Z}  \to \mathbb{Z}$ is antisymmetric and defined by
\begin{equation*}
f_{|\mathbb{N}} : m \mapsto \frac{(-1)^{m}-1}{2}.
\end{equation*}

For all $r\in\Z$, the $\qt$-character of the prefundamental representation $L_{1,q^{2r}}^+$ defined in (\ref{defqtL}) is
\begin{equation*}
[L_{1,q^{2r}}^+]_t = [\pmb\Psi_{1q^{2r}}]\chi_1 .
\end{equation*}

\begin{prop}
With these $\qt$-characters, we can write a $t$-deformed version of the Baxter relation (\ref{TQ'}), for all $r\in\Z$,
\begin{equation}\label{baxterq}
[L(Y_{1,q^{2r-1}})]_t\ast[L_{1,q^{2r}}^+]_t = t^{-1/2}[\omega_1][L_{1,q^{2r-2}}^+]_t + t^{1/2}[-\omega_1][L_{1,q^{2r+2}}^+]_t. 
\end{equation}
\end{prop}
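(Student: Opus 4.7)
\emph{Plan.} The plan is to derive (\ref{baxterq}) as the quantum exchange relation at the vertex $(1,2r)$ of $\Gamma$, multiplied by the central factor $[r\omega_1]\otimes\chi_1\in\E$ that enters the definition (\ref{defqtL}) of the prefundamental $\qt$-character $[L_{1,q^{2r}}^+]_t$. By Theorem \ref{theoA} applied in type $A_1$ (which generalises Example \ref{exmutsl2} to arbitrary $r$), the quantum cluster variable obtained from the initial quantum seed $\mathcal{S}$ by a single mutation at $(1,2r)$ coincides with $\J([L(Y_{1,q^{2r-1}})]_t)$. Since $(1,2r)$ has in $\Gamma$ exactly two adjacent vertices, the incoming $(1,2r-2)$ and the outgoing $(1,2r+2)$, with exchange-matrix entries $+1$ and $-1$ respectively, the quantum mutation formula (\ref{qmut}) reduces to
\[\J([L(Y_{1,q^{2r-1}})]_t)=M(e_{(1,2r-2)}-e_{(1,2r)})+M(e_{(1,2r+2)}-e_{(1,2r)}).\]

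I then multiply on the right by $z_{1,2r}=M(e_{(1,2r)})$ and apply the toric-frame identity $M(a)\ast M(b)=t^{\Lambda(a,b)/2}M(a+b)$. The relevant exponents $\Lambda_{(1,2r-2),(1,2r)}=\mathcal{F}_{11}(2)=-1$ and $\Lambda_{(1,2r+2),(1,2r)}=\mathcal{F}_{11}(-2)=1$ are read off from Example \ref{exsl2-3}, and the two summands collapse to $t^{-1/2}z_{1,2r-2}$ and $t^{1/2}z_{1,2r+2}$, yielding the plain exchange relation
\[\J([L(Y_{1,q^{2r-1}})]_t)\ast z_{1,2r}=t^{-1/2}z_{1,2r-2}+t^{1/2}z_{1,2r+2}\quad\in\mathcal{T}_t.\]

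Finally, I multiply both sides by the central element $[r\omega_1]\otimes\chi_1\in\E\subset\T_t$. Rewriting $[r\omega_1]=[\omega_1][(r-1)\omega_1]=[-\omega_1][(r+1)\omega_1]$ converts the right-hand side into $t^{-1/2}[\omega_1][L_{1,q^{2r-2}}^+]_t+t^{1/2}[-\omega_1][L_{1,q^{2r+2}}^+]_t$, while the left-hand side becomes $[L(Y_{1,q^{2r-1}})]_t\ast[L_{1,q^{2r}}^+]_t$ via the identification $\J$ and (\ref{defqtL}). This is precisely (\ref{baxterq}). The only technical obstacle is the book-keeping of half-integer $t$-exponents in the second step; no additional input beyond Theorem \ref{theoA}, Proposition \ref{propComp}, and the explicit values of $\mathcal{F}_{11}$ on $2\mathbb{Z}$ recalled in Example \ref{exsl2-3} is required.
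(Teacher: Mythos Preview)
Your proof is correct and follows exactly the approach indicated by the paper: the paper does not give a detailed argument but simply remarks (in the Remark following the Proposition) that relation (\ref{baxterq}) is precisely the quantum exchange relation arising from the mutation in Example \ref{exmutsl2}, shifted to a generic vertex $(1,2r)$. Your write-up makes this explicit, correctly computing the $t$-exponents $\Lambda_{(1,2r\mp2),(1,2r)}=\mp1$ from $\mathcal{F}_{11}(\pm2)$ and then tensoring by the central element $[r\omega_1]\otimes\chi_1\in\E$ to pass from the $z$-variables to the $\qt$-characters via (\ref{Psizz}) and (\ref{defqtL}).
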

We call this relation the \emph{quantized Baxter relation}.

\begin{rem}
If we identify the variables $Y_{1,q^{2r}}$ and their images through the injection $\J$, this relation is actually the exchange relation related to the quantum mutation in Example \ref{exmutsl2} (for a generic quantum parameter $q^{2r}$).
\end{rem}

Now consider the quantum cluster algebra $\A(\Lambda_1,\Gamma_1)$, with notations from Section \ref{sectcompatiblepairs} ($\Lambda_1$ and $\Gamma_1$ are given explicitly in Example \ref{exmutsl2}).

It is a quantum cluster algebra of finite type (if we remove the frozen vertices from the quiver, we get just one vertex, which is a quiver of type $A_1$). It has two quantum clusters, containing the two frozen variables $z_{1,2}, z_{1,-2}$ and the mutable variables $z_{1,0}$ and $z_{1,0}^{(1)}$, respectively. Thus, it is generated as a $\Ctt$-algebra by
\begin{equation}
\begin{array}{ll}
E := [L(Y_{1,q^{-1}})]_t \enskip (=z_{1,0}^{(1)}) , & F:= [L_{1,1}^+]_t \enskip (=z_{1,0}), \\
K:= [\omega_1][L_{1,q^{-2}}^+]_t \enskip (=z_{1,-2}), & K':= [-\omega_1][L_{1,q^{2}}^+]_t \enskip (=z_{1,2}).
\end{array}
\end{equation}

This algebra is a quotient of a well-known $\Ctt$-algebra.

Let $q$ be a formal parameter. The quantum group $\mathcal{U}_q(\mathfrak{sl}_2)$ can be seen as the quotient
\begin{equation}\label{UqD2}
\mathcal{U}_q(\mathfrak{sl}_2) = \mathfrak{D}_2/\left\langle KK'=1\right\rangle,
\end{equation}
where $\mathfrak{D}_2$ is the $\mathbb{C}(q)$-algebra with generators $ E,F,K,K'$ and relations:
\begin{equation}\label{relD2}
\begin{array}{ll}
KE=q^2EK, & K'E=q^{-2}EK' \\
KF=q^{-2}FK, & K'F=q^2FK'\\
KK' = K'K, & \text{ and } [E,F] = (q-q^{-1})(K-K').
\end{array}
\end{equation}

\begin{rem}\begin{itemize}
	\item[•] As in \citep[Remark 3.1]{SS}, notice that the last relation in (\ref{relD2}) is not the usual relation
\begin{equation*}
[e,f] = \frac{K-K'}{q-q^{-1}}.
\end{equation*}
But both presentations are equivalent, given the change of variables 
\begin{equation*}
E=(q-q^{-1})e, \quad F=(q-q^{-1})f.
\end{equation*}

	\item[•] $\mathfrak{D}_2$ is the \emph{Drinfeld double} \citep{QGD} of the Borel subalgebra of $\mathcal{U}_q(\mathfrak{sl}_2)$ (the subalgebra generated by $K,E$).
\end{itemize}
\end{rem}

\begin{prop}\label{propquotient}
The $\Ctt$-algebra $\A(\Lambda_1,\Gamma_1)$ is isomorphic to the quotient of the Drinfeld double $\mathfrak{D}_2$ of parameter $-t^{1/2}$, 
\begin{equation}
\A(\Lambda_1,\Gamma_1) \xrightarrow{\sim} \mathfrak{D}_2/C_{-t^{1/2}},
\end{equation}
where $C_{-t^{1/2}}$ is the quantized Casimir element: 
\begin{equation}\label{casimir}
C_{-t^{1/2}} := E F -t^{1/2}K-t^{1/2}K'.
\end{equation}
\end{prop}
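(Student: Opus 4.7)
The plan is to define a $\Ctt$-algebra homomorphism $\Phi : \mathfrak{D}_2 \to \mathcal{A}(\Lambda_1,\Gamma_1)$ by sending the abstract generators to the explicit quantum cluster variables, $E\mapsto z_{1,0}^{(1)}$, $F\mapsto z_{1,0}$, $K\mapsto z_{1,-2}$, $K'\mapsto z_{1,2}$, to verify that $\Phi$ respects the defining relations of $\mathfrak{D}_2$ at $q=-t^{1/2}$ and that $\Phi(C_{-t^{1/2}})=0$, and finally to show that the induced morphism $\bar\Phi : \mathfrak{D}_2/\langle C_{-t^{1/2}}\rangle \to \mathcal{A}(\Lambda_1,\Gamma_1)$ is bijective. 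The three relations involving only the initial cluster variables, namely $KK'=K'K$, $KF=q^{-2}FK$ and $K'F=q^2FK'$, are read directly off the matrix $\Lambda_1$: from $\mathcal{F}_{11}(2)=-1$ and $\mathcal{F}_{11}(4)=0$ one has $K\ast F = t^{-1}F\ast K$, $K'\ast F = tF\ast K'$ and $K\ast K'=K'\ast K$, which agree with $q^{\mp 2}$ precisely when $q=-t^{1/2}$ (the sign being pinned down at a later step).

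The relations involving $E$ and the Casimir relation all follow from one explicit calculation: the expansion of the mutated variable. Using the quantum mutation formula at the vertex $(1,0)$ and converting the commutative monomial presentation of Example~\ref{exmutsl2} into ordered $\ast$-products via $\Lambda_1$, one finds
\begin{equation*}
E \;=\; t^{-1/2}\,K\ast F^{-1} + t^{1/2}\,K'\ast F^{-1},
\end{equation*}
and multiplying on the right by $F$ yields $E\ast F = t^{-1/2}K + t^{1/2}K'$, which is precisely the quantized Baxter relation (\ref{baxterq}) at $r=0$ and gives $\Phi(C_{-t^{1/2}})=0$. Multiplying on the left by $F$ similarly gives $F\ast E = t^{1/2}K + t^{-1/2}K'$, whence $[E,F] = (t^{-1/2}-t^{1/2})(K-K')$, forcing $q=-t^{1/2}$ to match $(q-q^{-1})(K-K')$; analogous $\ast$-product computations establish $KE=q^2 EK$ and $K'E=q^{-2}EK'$. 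Centrality of $C_{-t^{1/2}}$ in $\mathfrak{D}_2$ is a routine check using $[E,F]$, so $\langle C_{-t^{1/2}}\rangle$ is a principal two-sided ideal and $\bar\Phi$ is well-defined. Surjectivity is then immediate, since $\mathcal{A}(\Lambda_1,\Gamma_1)$ is generated as a $\Ctt$-algebra by the variables of its two quantum seeds, all of which lie in the image of $\Phi$.

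The main obstacle is the injectivity of $\bar\Phi$, which we propose to establish by comparing PBW-type bases. On the Drinfeld-double side, the standard PBW basis $\{E^aF^bK^cK'^d\}_{a,b\geq 0,\,c,d\in\mathbb{Z}}$ of $\mathfrak{D}_2$, combined with the Casimir relation that rewrites $EF$ as a $\Ctt$-combination of $K$ and $K'$, yields by induction the spanning set
\begin{equation*}
\{F^bK^cK'^d\}_{b\geq 0,\,c,d\in\mathbb{Z}}\;\cup\;\{E^aK^cK'^d\}_{a\geq 1,\,c,d\in\mathbb{Z}}
\end{equation*}
of the quotient $\mathfrak{D}_2/\langle C_{-t^{1/2}}\rangle$. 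On the cluster-algebra side, this is exactly the union of cluster monomials of the two quantum seeds of the rank-one cluster algebra $\mathcal{A}(\Lambda_1,\Gamma_1)$. To see that $\bar\Phi$ sends these to a $\Ctt$-linearly independent family, we invoke the quantum Laurent phenomenon (Corollary~\ref{propLaurentP}) to embed everything in the quantum torus of the initial seed $(F,K,K')$ and then track the $F$-valuation: the elements $F^bK^cK'^d$ have non-negative $F$-degree, while the elements $E^aK^cK'^d$ with $a\geq 1$ acquire strictly negative $F$-degree through the substitution $E = t^{-1/2}KF^{-1}+t^{1/2}K'F^{-1}$. The two families thus have disjoint supports in the initial quantum torus, and $\bar\Phi$ sends a basis to a basis, yielding the desired isomorphism.
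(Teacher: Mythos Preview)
Your proof follows essentially the same strategy as the paper's: verify the $\mathfrak{D}_2$-relations and the Casimir relation via the quantized Baxter relation, then establish bijectivity by comparing PBW-type bases on both sides. Two minor differences are worth noting. First, you run the homomorphism in the direction $\mathfrak{D}_2\to\mathcal{A}(\Lambda_1,\Gamma_1)$, which is more natural since $\mathfrak{D}_2$ comes with an explicit presentation, whereas the paper defines a map $\mathcal{A}(\Lambda_1,\Gamma_1)\to\mathfrak{D}_2$ without exhibiting a presentation of the source. Second, the paper obtains the linear independence of the cluster monomials $\{E^\alpha K^\beta K'^\gamma\}\cup\{K^\beta K'^\gamma F^\alpha\}$ by invoking the general result of \cite{LICM}, while you attempt a direct argument via the $F$-degree in the initial quantum torus.

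Your direct argument has a small gap. Tracking the $F$-valuation shows that the two families $\{F^bK^cK'^d\}_{b\geq 0}$ and $\{E^aK^cK'^d\}_{a\geq 1}$ are supported in disjoint $F$-degrees, and the first family is visibly independent since its elements are distinct monomials of the quantum torus. But you do not address linear independence \emph{within} the second family: for fixed $a\geq 1$, the elements $E^aK^cK'^d$ have overlapping monomial supports, so ``disjoint supports'' does not settle this. The fix is easy: since the quantum torus is a domain and $E^a\neq 0$, right-multiplication by $E^a$ preserves linear independence, so $\{K^cK'^dE^a\}_{c,d\in\Z}$ is independent; as $E^aK^cK'^d$ differs from $K^cK'^dE^a$ only by a power of $t$, the same holds for your family. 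Combined with the $F$-degree separation over $a$, this completes your argument without appealing to \cite{LICM}.
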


\begin{proof}
One has, in $\A(\Lambda_1,\Gamma_1)$, 
\begin{equation}\label{relEF}
E\ast F = t^{-1/2}K + t^{1/2}K'.
\end{equation}
This is the quantized Baxter relation (\ref{baxterq}). Thus,
\begin{equation*}
[E,F] = (-t^{1/2} +t^{-1/2})(K-K').
\end{equation*}
We check that the other relations in (\ref{relD2}) are also satisfied using the structure of the quantum torus $\T_t$ (which is given explicitly in Example \ref{exsl2-3}). 

Hence the map 
\begin{equation*}
\A(\Lambda_1,\Gamma_1) \xrightarrow{\theta} \mathfrak{D}_2,
\end{equation*}
sending generators to generators is well-defined and descends onto the quotient
\begin{equation*}
\A(\Lambda_1,\Gamma_1) \to \mathfrak{D}_2/C_{-t^{1/2}}.
\end{equation*}
Moreover, from \citep{LICM}, the cluster monomials in a given cluster in a cluster algebra are linearly independent. In this case, the quantum cluster algebra $\A(\Lambda_1,\Gamma_1)$ is of type $A_1$ (without frozen variables), thus of finite-type. It has two (quantum) clusters : $(E,K,K')$ and $(F,K,K')$. Thus, the set of commutative quantum cluster monomials
\begin{equation}\label{EKKKKF}
\left\lbrace E^\alpha K^\beta K'^\gamma \mid \alpha,\beta,\gamma \in \Z \right\rbrace \cup \left\lbrace K^\beta K'^\gamma F^\alpha \mid \alpha,\beta,\gamma \in \Z \right\rbrace, 
\end{equation}
forms a $\Ctt$-basis of $\A(\Lambda_1,\Gamma_1)$.

Consider the PBW basis of $\mathfrak{D}_2$:
\begin{equation}
\left\lbrace E^\alpha K^\beta K'^\gamma F^\delta \mid \alpha,\beta,\gamma,\delta \in \Z \right\rbrace.
\end{equation}
From the expression of the Casimir element $C_{-t^{1/2}}$ (\ref{casimir}), we deduce a $\Ctt$-basis of $\mathfrak{D}_2/C_{-t^{1/2}}$, of the same form as (\ref{EKKKKF}):
\begin{equation}
\left\lbrace E^\alpha K^\beta K'^\gamma \mid \alpha,\beta,\gamma \in \Z \right\rbrace \cup \left\lbrace K^\beta K'^\gamma F^\alpha \mid \alpha,\beta,\gamma \in \Z \right\rbrace, .
\end{equation}
Hence, the map $\theta$ sends a basis to a basis, thus it is isomorphic.

\end{proof}

This result should be compared with the recent work of Schrader and Shapiro \citep{SS}, in which they recognize the same structure of $\mathfrak{D}_2$ in an algebra built on a quiver, with some quantum $\mathcal{X}$-cluster algebra structure. In their work, they generalized this result in type $A$ (Theorem 4.4). Ultimately, they obtain an embedding of the whole quantum group $\mathcal{U}_q(\mathfrak{sl}_n)$ into a quantum cluster algebra. The result of Proposition \ref{propquotient}, together with their results, gives hope that one could find a realization of the quantum group $\mathcal{U}_q(\g)$ as a quantum cluster algebra, related to the representation theory of $\Uqg$.

Furthermore, define in this case $\cO_1^+$, the subcategory of $\cO^+_\Z$ of objects whose image in the Grothendieck ring $K_0(\cO^+_\Z)$ belongs to the subring generated by $[L_{1,q^{-2}}^+],[L_{1,1}^+],[L_{1,q^{2}}^+]$ and $[L(Y_{1,q^{-1}})]$. Then $\cO_1^+$ is a monoidal category. 

From the classification of simple modules when $\g=\mathfrak{sl}_2$ in \citep[Section 7]{CABS}, we know that the only prime simple modules in $\cO_1^+$ are
\begin{equation}
L_{1,q^{-2}}^+,L_{1,1}^+,L_{1,q^{2}}^+,L(Y_{1,q^{-1}}).
\end{equation}
Moreover, a tensor product of those modules is simple if and only if it does not contain both a factor $L_{1,1}^+$ and a factor $L(Y_{1,q^{-1}})$ (the others are in so-called pairwise \emph{general position}). Thus, in this situation, the simple modules are in bijection with the cluster monomials:
\begin{equation*}
\begin{array}{ccc}
\left\lbrace \begin{array}{c}
\text{simple modules} \\
\text{ in } \cO_1^+
\end{array}\right\rbrace & \longleftrightarrow & \left\lbrace \begin{array}{c}
\text{commutative quantum cluster}\\
\text{monomials in } \A(\Lambda_1,\Gamma_1)
\end{array}\right\rbrace  \\
\left(L_{1,q^{-2}}^+\right)^{\otimes \alpha}\otimes \left(L_{1,1}^+\right)^{\otimes \beta}\otimes
\left(L_{1,q^{2}}^+\right)^{\otimes \gamma} &  \mapsto & K^{\alpha} F^{ \beta}
K'^{ \gamma},\\
\left(L_{1,q^{-2}}^+\right)^{\otimes \alpha'}\otimes \left(L_{1,q^{2}}^+\right)^{\otimes \beta'}\otimes
L(Y_{1,q^{-1}})^{\otimes \gamma'} &  \mapsto & K^{ \alpha'} K'^{ \beta'} E^{ \gamma'}.
\end{array}
\end{equation*}

\vspace*{1cm}

\bibliographystyle{alpha}
\bibliography{article}

\flushleft{
\textsc{Université Paris-Diderot, \\
CNRS Institut de mathématiques de Jussieu-Paris Rive Gauche, UMR 7586,} \\
Bâtiment Sophie Germain, Boite Courrier 7012, \\
8 Place Aurélie Nemours - 75205 PARIS Cedex 13,\\
E-mail: \texttt{lea.bittmann@imj-prg.fr}
}

\end{document}